\theoremstyle{plain}
\newtheorem{theorem}{Theorem}[section]
\newtheorem{remark}{Remark}[section]
\renewcommand{\vec}[1]{\boldsymbol{#1}}
\newcommand{\vecs}[1]{\boldsymbol{#1}}
\newcommand{\paren}[1]{\left(#1\right)}
\newcommand{\brac}[1]{\left[#1\right]}
\newcommand{\E}{\mathbb{E}}
\newcommand{\avg}[1]{\E[#1]}
\newcommand{\D}[2]{\frac{d#1}{d#2}}
\newcommand{\PD}[2]{\frac{\partial#1}{\partial#2}}
\newcommand{\lap}[1]{\Delta#1}
\newcommand{\abs}[1]{\left|#1\right|}
\DeclareMathOperator{\prob}{Pr}
\newcommand{\DA}{D^{\textrm{A}}}
\newcommand{\DB}{D^{\textrm{B}}}
\newcommand{\DC}{D^{\textrm{C}}}
\newcommand{\bi}{\vec{i}}
\newcommand{\bj}{\vec{j}}
\newcommand{\bk}{\vec{k}}
\newcommand{\ve}{\vec{e}}
\newcommand{\veta}{\vecs{\eta}}
\newcommand{\vp}{\vec{p}}
\newcommand{\vP}{\vec{P}}
\newcommand{\vC}{\vec{C}}
\newcommand{\vT}{\vec{T}}
\newcommand{\vL}{\vec{L}}
\newcommand{\vgam}{\vec{\gamma}}
\newcommand{\va}{\vec{a}}
\newcommand{\vb}{\vec{b}}
\newcommand{\vc}{\vec{c}}
\newcommand{\vx}{\vec{x}}
\newcommand{\vy}{\vec{y}}
\newcommand{\vz}{\vec{z}}
\newcommand{\vf}{\vec{f}}
\newcommand{\vF}{\vec{F}}
\newcommand{\vV}{\mathcal{V}}
\newcommand{\vq}{\vec{q}}
\newcommand{\vO}{\vec{0}}
\newcommand{\vA}{\vec{A}}
\newcommand{\vB}{\vec{B}}
\newcommand{\rb}{r_{\textrm{b}}}
\newcommand{\vqa}{\vec{q}^{a}}
\newcommand{\vqb}{\vec{q}^{b}}
\newcommand{\vqc}{\vec{q}^{c}}
\newcommand{\vQ}{\vec{Q}}
\newcommand{\vQa}{\vec{Q}^{a}}
\newcommand{\vQb}{\vec{Q}^{b}}
\newcommand{\vQc}{\vec{Q}^{c}}
\newcommand{\ind}{\mathbbm{1}}
\def\R{\mathbb{R}}
\newcommand{\pb}{p_{\textrm{b}}}
\newcommand{\Tb}{T_{\textrm{bind}}}
\newcommand{\Rset}{\mathcal{R}}
\renewcommand{\epsilon}{\varepsilon}
\renewcommand{\rb}{\epsilon}
\newcommand{\ballunbind}{B_{(1-\gamma)\rb}}
\newcommand{\kp}{\kappa^{+}}
\newcommand{\km}{\kappa^{-}}
\newcommand{\Rp}{\mathcal{R}^{+}}
\newcommand{\Rm}{\mathcal{R}^{-}}
\newcommand{\diffop}{\mathcal{L}}
\newcommand{\Rph}{\mathcal{R}^{+}_h}
\newcommand{\Rmh}{\mathcal{R}^{-}_h}
\newcommand{\diffoph}{\mathcal{L}_h}
\newcommand{\fabc}{f^{(a,b,c)}}
\newcommand{\Fabcijk}{F_{\bi^a \bj^b \bk^c}}
\newcommand{\vVabc}{\vV_{\bi^a \bj^b \bk^c}}
\newcommand{\kpijk}{\kappa_{i j k}^{+}}
\newcommand{\kpkij}{\kappa_{k\vert i j}^{+}}
\newcommand{\kpij}{\kappa_{i j}^{+}}
\newcommand{\kmk}{\kappa_{k}^{-}}
\newcommand{\kmijk}{\kappa_{i j k}^{-}}
\newcommand{\kmkij}{\kappa_{i j \vert k}^{-}}
\newcommand{\kprod}{k_{\textrm{1}}}
\newcommand{\kdecay}{k_{\textrm{2}}}
\newcommand{\commenta}[1]{#1}
\newcommand{\commentb}[1]{#1}
\newcommand{\commentab}[1]{#1}
\begin{document}
\title{An Unstructured Mesh Convergent Reaction-Diffusion Master Equation for Reversible Reactions}

\author[bu]{Samuel A. Isaacson}
\ead{isaacson@math.bu.edu}
\author[bu]{Ying Zhang}
\ead{phzhang@bu.edu}


\address[bu]{Department of Mathematics and Statistics, Boston
  University, Boston, MA 02215}




\begin{abstract}
  The convergent reaction-diffusion master equation (CRDME) was
  recently developed to provide a lattice particle-based stochastic
  reaction-diffusion model that is a convergent approximation in the
  lattice spacing to an underlying spatially-continuous particle
  dynamics model. The CRDME was designed to be identical to the
  popular lattice reaction-diffusion master equation (RDME) model for
  systems with only linear reactions, while overcoming the RDME's loss
  of bimolecular reaction effects as the lattice spacing is taken to
  zero. In our original work we developed the CRDME to handle
  bimolecular association reactions on Cartesian grids. In this work
  we develop several extensions to the CRDME to facilitate the
  modeling of cellular processes within realistic biological
  domains. Foremost, we extend the CRDME to handle reversible
  bimolecular reactions on unstructured grids. Here we develop a
  generalized CRDME through discretization of the spatially continuous
  volume reactivity model, extending the CRDME to encompass a larger
  variety of particle-particle interactions. Finally, we conclude by
  examining several numerical examples to demonstrate the convergence
  and accuracy of the CRDME in approximating the volume reactivity
  model.

\end{abstract}


\maketitle

\section{Introduction}
The dynamics of many biological processes rely on an interplay between
spatial transport and chemical reaction. Examples of such processes
include gene regulation, where proteins undergo a diffusive search
throughout the nuclei of cells to reach specific DNA binding
sites~\cite{IsaacsonPNAS2011,IsaacsonBmB2013,BergSlidingI}; and signal
transduction, where phosphorylation and dephosphorylation of signaling
proteins can occur at spatially segregated locations leading to
spatial gradients of activated and inactivated
proteins~\cite{KholodenkoDiffControl}.  The interplay of diffusion and
reaction can also influence whether such signals are able to
successfully propagate from the cell membrane to nucleus
\cite{MunozGarcia:2009hd,Kholodenko:2010jv}. A challenge in developing
mathematical models of these processes is that they often occur within
spatially heterogeneous environments. Organelles, filamentous
structures, and macro-molecules can represent both steric barriers and
reactive targets for proteins undergoing diffusive transport within
cells. Even the basic geometrical property of cell shape can
dramatically alter information flow in signaling
pathways~\cite{IyengarCellShape08} and the ability of cells to sense
gradients~\cite{Spill:2016kc}.

An additional modeling challenge arises from experimental observations
that many biochemical processes within single cells have stochastic
dynamics~\cite{ArkinPNAS1997,CollinsNatureEukNoise,RaserScience2004,Ladbury:2012hq}.
To facilitate the study of reaction-diffusion processes at the scale
of a single cell it is therefore necessary to develop mathematical
models and numerical simulation methods that can accurately account
for the stochastic diffusion and reaction of molecules within
realistic cellular domains.  Particle-based stochastic
reaction-diffusion models are one popular approach to model the
stochastic diffusion and reactions between individual molecules within
cells~\cite{SmoluchowskiDiffLimRx,CollinsKimballPartialAdsorp,DoiSecondQuantA,DoiSecondQuantB,TeramotoDoiModel1967,GardinerRXDIFFME}. While
there are a plethora of particle-based stochastic reaction-diffusion
models and simulation methods that have been used for modeling
cellular processes~\cite{AndrewsBrayPhysBio2004,IsaacsonSJSC2006,
  Bartol2008,ErbanChapman2009,Arjunan:2009fd,SchultenWholCellEColi2011kk,HellanderURDME2012,DeSchutterSTEPS2012uh,IsaacsonCRDME2013,
  SchonebergNoeReaddy2013,DonevJCP2010, WoldeEgfrdPNAS2010,
  DonevSRBDPaper2017}, they all share a number of basic features.
Such models typically represent proteins, mRNAs, and other cellular
molecules as point particles or small spheres. In the most common form
that we will subsequently consider, individual molecules move by
diffusion processes, or by random walk approximations. Linear zeroth
order reactions, e.g.  $\varnothing \to \textrm{A}$, occur with a
fixed probability per time, while linear first order reactions, e.g.
$\textrm{A} \to \textrm{B}$, generally occur with a fixed probability
per time for each molecule of the reactant species (i.e. each
$\textrm{A}$ molecule). Nonlinear second order reactions,
e.g. $\textrm{A} + \textrm{B} \to \textrm{C}$, occur according to a
variety of mechanisms when an individual pair of \textrm{A} and
\textrm{B} molecules are sufficiently close. It is in the choice of
this association mechanism that the common particle-based stochastic
reaction-diffusion models tend to differ.

In this work we focus on the spatially continuous volume reactivity
model, which approximates molecules as point particles that move by
Brownian Motion. A bimolecular reaction of the form
$\textrm{A} + \textrm{B} \to \textrm{C}$ is modeled through an
interaction function that determines the probability density per time
an \textrm{A} molecule at $\vx$ and a \textrm{B} molecule at $\vy$ can
react to produce a \textrm{C} molecule at $\vz$. The Doi model is the
most common form of the volume reactivity model used in studying
cellular processes. It was popularized by
Doi~\cite{DoiSecondQuantA,DoiSecondQuantB}, who attributes the
original model to Termamoto and
Shigesada~\cite{TeramotoDoiModel1967}. In the Doi model, a bimolecular
reaction such as $ \textrm{A} + \textrm{B} \to \textrm{C}$ is modeled
for an individual pair of \textrm{A} and \textrm{B} molecules as
occurring with a fixed probability per time, $\lambda$, once their
separation is smaller than a specified reaction radius $\rb$, i.e.
$\abs{\vx - \vy} < \rb$.

A number of methods have been proposed for generating approximate
realizations of the stochastic process of diffusing and reacting
particles described by the Doi
model~\cite{ErbanChapman2009,ErbanChapman2011,DonevSRBDPaper2017,IsaacsonCRDME2013}.
Brownian Dynamics methods discretize this process in time using an
operator splitting approach applied to the underlying forward
Kolmogorov
equation~\cite{ErbanChapman2009,ErbanChapman2011,DonevSRBDPaper2017}. This
separates the diffusive and reactive processes into separate time
steps, allowing their individual approximation. For example,
in~\cite{DonevSRBDPaper2017} this splitting is coupled with exact
sampling of the diffusive motion of molecules during the diffusive
timestep (generally only possible in freespace or periodic domains),
and exact sampling of reactions between stationary molecules using a
variant of the Stochastic Simulation Algorithm
(SSA)~\cite{GillespieJPCHEM1977,KalosKMC75} during the reactive
timestep. This has allowed the simulation of two-dimensional pattern
formation systems with on the order of a million particles in square
domains.

In~\cite{IsaacsonCRDME2013} we developed an alternative convergent
approximation and corresponding numerical simulation method by
discretizing the Doi model in space instead of time. The resulting
convergent reaction-diffusion master equation (CRDME) represents a
spatial discretization of the forward Kolmogorov equation for the Doi
model, approximating the continuous Brownian motion and reactions of
molecules by a continuous time jump-process in which molecules hop and
react on an underlying spatial mesh. \commenta{This jump process gives
  the number of molecules of each chemical species at each mesh
  location at a given time. The CRDME corresponds to the set of ODEs
  describing the change in time of the probability the jump-process
  has a given value.}

The CRDME was designed with two major goals; providing a convergent
approximation to the Doi model, and being equivalent to the popular
lattice reaction-diffusion master equation (RDME) model in its
treatment of \emph{linear} reactions and spatial transport. In the
RDME molecules move by hopping between mesh voxels through
continuous-time random
walks~\cite{GardinerRXDIFFME,GardinerHANDBOOKSTOCH}. Within each mesh
voxel molecules are assumed to be well-mixed, i.e. uniformly
distributed. Zeroth and first order reactions are treated similarly to
the volume-reactivity model. Nonlinear second order reactions of the
form $\textrm{A} + \textrm{B} \to \textrm{C}$ occur with a fixed
probability per time for two molecules located within the \emph{same}
voxel. In this way the RDME can be formally interpreted as a
discretization of the volume reactivity model, where interaction
functions between molecules are given by delta functions in their
separation~\cite{IsaacsonRDMENote}. \commenta{The CRDME's primary
  difference from the RDME is in also allowing molecules within
  \emph{nearby} voxels to react, arising from \emph{direct}
  discretizion of the Doi model.}

The choice to maintain consistency with the RDME for linear reactions
and spatial transport is due to its many attractive features and
extensions that enable the modeling of cellular processes.  Foremost,
in the absence of nonlinear reactions the RDME can be interpreted as a
discretization of the forward equation for the volume reactivity
model. In the absence of any reactions this forward equation simply
corresponds to a high-dimensional diffusion equation in the combined
coordinates of all molecules.  By exploiting this connection standard
PDE discretization techniques can be used to extend the RDME to
include spatial transport mechanisms that are needed to model cellular
processes. These include extensions to include drift due to potential
fields~\cite{ElstonPeskinJTB2003,IsaacsonPNAS2011}, advection due to
underlying velocity fields~\cite{HellanderActTransport2010}, and
embedded boundary~\cite{IsaacsonSJSC2006} and unstructured
mesh~\cite{DeSchutterSTEPS2012uh,LotstedtFERDME2009} discretization
methods for deriving jump rates in meshes of complex domain
geometries. Such discretizations can be chosen to preserve important
physical properties of the underlying spatially-continuous transport
model, such as detailed balance of spatial fluxes and Gibbs-Boltzmann
equilibrium states for models that include drift due to potential
fields~\cite{ElstonPeskinJTB2003,IsaacsonPNAS2011}.


A second benefit to the RDME model is that in appropriate
large-population limits it is \emph{formally} expected to limit to
more macroscopic stochastic PDE (SPDE) models, and as the population
further grows, to reaction-diffusion PDE models for continuous
concentration fields~\cite{Arnold1980ww,Arnold1980dl}. These
connections have been exploited to develop computationally efficient
hybrid models that represent different chemical species or reactions
at different physical levels of
detail~\cite{DuncanMultiscaleStochChem2016gu,HellanderActTransport2010},
or that partition spatial domains into regions of low populations
represented by jump processes satisfying the RDME and regions of
higher populations represented through continuous concentration fields
satisfying reaction-diffusion SPDEs or deterministic
PDEs~\cite{FleggYatesMultiscale2015kd,YatesHarrisonMultiscaleCoup2016hx}.

Finally, for researchers interested in simply using the RDME to model
cellular systems, there are a number of optimized, publicly
available software packages that can simulate the jump process of
molecules reacting and diffusing within complex geometries. These
include the unstructured mesh
URDME/PYURDME/StochSS~\cite{HellanderURDME2012} and
STEPS~\cite{DeSchutterSTEPS2012uh} software packages, and the
GPU-based structured mesh Lattice Microbes software
package~\cite{SchultenWholCellEColi2011kk} (which has successfully
simulated systems containing hundreds of thousands to order of one
million molecules within complex geometries corresponding to
three-dimensional whole bacterial cells).

For these many reasons the RDME has become a popular tool for studying
cellular processes in which both noise in the chemical reaction
process and the diffusion of molecules may be
important. Unfortunately, there is a major practical difficulty when
using the RDME. The formal continuum limit of the RDME, the
volume-reactivity model with delta-function interactions, is only
correct in one-dimension. In two or more dimensions, in \commentb{the continuum
limit that the mesh spacing in the RDME approaches zero,} bimolecular
reactions are
lost~\cite{IsaacsonRDMELims,IsaacsonRDMELimsII,Hellander:2012jk}. This
loss of reaction occurs as in the RDME molecules are represented by
point particles that can only react when located within the same mesh
voxel.  In the continuum limit that the mesh spacing approaches zero
the RDME converges to a model in which molecules correspond to point
particles undergoing Brownian motion, which can only react when
located at the same position. In two or more dimensions the
probability of two molecules ever reaching the same position is zero,
and so the particles are never able to encounter each other and react.

This loss of bimolecular reactions in the limit that the mesh spacing
approaches zero is a challenge in using RDME-type models. In contrast
to numerically solving PDE models, or solving the RDME for systems
with only linear reactions, one can not expect that shrinking the mesh
spacing will eventually give better accuracy in approximating some
underlying spatially-continuous model. Moreover, for a given chemical
system it is not known how to determine an ``optimal'' mesh spacing
that minimizes the approximation error for convergent linear reaction
and spatial transport terms, while avoiding errors due to an
unphysical decrease in the occurrence of bimolecular reactions. For
specific chemical systems one may be able to numerically estimate an
optimal mesh spacing, but even then there is no explicit control on
how well the RDME approximates any particular spatially continuous
stochastic reaction-diffusion model.

Several methods have been proposed to overcome this challenge in using
the
RDME. In~\cite{ErbanChapman2009,ElfPNASRates2010,Hellander:2012jk,HellanderJumpRatesUnstructMesh2017ed}
bimolecular reaction rates in the RDME are renormalized, allowing the
more accurate approximation of statistics from spatially-continuous
particle models over a broader range of mesh spacings than the
standard RDME. While such methods still lose bimolecular reactions in
the continuum limit that the mesh spacing approaches zero, and are
hence non-convergent, they can provide accurate statistics over much
larger ranges of mesh spacings than the RDME (even approaching length
scales comparable to the Doi reaction-radius
$\epsilon$~\cite{Hellander:2012jk}). Such approaches have recently
been extended to unstructured grids, to allow for more accurate
RDME-like models in complex geometries corresponding to biological
domains~\cite{HellanderJumpRatesUnstructMesh2017ed}. A second approach
is to consider multiscale couplings, where the RDME model is replaced
with Brownian Dynamics or other continuous particle dynamics models in
regions where increased accuracy, and hence smaller mesh spacings, are
desired~\cite{FleggErban2011jl}.

In this work we take a different approach, developing a CRDME model
that \emph{converges} to the spatially-continuous volume-reactivity model,
but is consistent with the RDME in its handling of linear reactions
and spatial transport. In this way the CRDME can leverage both the
large body of extensions to the RDME to facilitate more general
spatial transport mechanisms in complex geometries, and the optimized
simulation methods and multiscale couplings developed for the
RDME. Here we extend the Cartesian grid CRDME developed
in~\cite{IsaacsonCRDME2013} for bimolecular association reactions to
reversible reactions on general unstructured grids. The new unstructured grid
CRDME can be used to simulate chemical processes in complex domain
geometries as needed for studying cellular systems. It also has
the appealing property of preserving pointwise detailed balance at
steady-state whenever the spatially continuous volume reactivity model
it approximates also satisfies pointwise detailed balance.

To construct an unstructured mesh CRDME for reversible reactions we
utilize a hybrid discretization approach. We begin in the next section
by approximating the continuous Brownian motion of a single molecule
within a bounded domain by a lattice jump process. We review the
method developed in~\cite{LotstedtFERDME2009}, which derives
transition rates for the hopping of one molecule between neighboring
mesh voxels. Here the mesh is given by polygonal voxels, representing
the dual mesh to a triangulation of the original domain. The diffusive
jump rates are derived by finite element discretization of the
diffusion operator on the triangulated
mesh~\cite{LotstedtFERDME2009}. In \cref{S:methodSect} we then
consider the approximation of reversible bimolecular reactions on the
same underlying polygonal dual mesh. We begin by introducing the
abstract spatially-continuous volume-reactivity model for reversible
reactions in \cref{S:absVolReactModel}. In \cref{S:volreactDoiModel}
we show which choice of interaction functions in the abstract
volume-reactivity model gives rise to the popular Doi reaction
model. \Cref{S:rxDiscretMethod} then develops our finite volume method
for deriving a jump process approximation to the association and
dissociation reaction terms in the abstract volume reactivity
model. Combining the finite element discretization of spatial
transport terms from \cref{S:diffApprox} with the finite volume
discretization of reaction terms from \cref{S:rxDiscretMethod}, in
\cref{S:unstructCRDME} we obtain a CRDME for a pair of molecules that
can undergo the reversible
$\textrm{A} + \textrm{B} \leftrightarrows \textrm{C}$
reaction. \commenta{Here the finite element discretization weights
  determine the probability per time that a molecule can hop from a
  given voxel to its neighbor, while the finite volume discretization
  weights determine the probability per time reactants in different voxels
  can react. Combined they define a jump process for the two molecules
  hopping and reacting on the unstructured mesh. The CRDME then
  describes the time evolution of the probability this jump process
  has a given value.}

We next explain \commenta{how this two-particle CRDME} can be
generalized to a system with an arbitrary number of molecules of each
species, and summarize the transitions comprising the general
multiparticle reaction-diffusion jump process in
\cref{tab:aAndBToCRxs}. We also summarize \cref{ap:multipartModel},
where it is shown how to formulate the general multiparticle abstract
volume-reactivity model, how to discretize this model to obtain a
multiparticle CRDME, and how this model can be rewritten in a form
that looks similar to the RDME. In \cref{S:crdmeTorRDME} we briefly
discuss the relationship between the reversible binding CRDME model
and the RDME, pointing out that the RDME can be interpreted as a
(non-convergent) approximation of the abstract volume-reactivity model
that is similar to the CRDME, but restricts reactions to molecules
within the same mesh voxel. In \cref{S:implementation} we develop
several methods for numerically evaluating the transition rates needed
to model reversible reactions in the CRDME model, considering both
general (smooth) interaction functions, and the discontinuous
indicator function that is used in the Doi model. Finally, in
\cref{S:numericExs} we consider a number of numerical examples to
demonstrate the convergence and accuracy of the CRDME in approximating
the volume-reactivity model, and to illustrate how the CRDME can be
used to study models for cellular processes within realistic domain
geometries arising from imaging data.

\section{Diffusion approximation on Unstructured
  Meshes} \label{S:diffApprox} We begin by deriving a lattice master
equation (equivalently continuous time random walk or jump process)
approximation to the Brownian motion of individual molecules using the
unstructured mesh method developed
in~\cite{LotstedtFERDME2009}. Spatial transition rates (i.e. hopping
rates) between lattice sites are obtained from a finite element
discretization of the Laplacian on triangulated meshes, giving rise to
a semi-discrete diffusion equation model with the form of a master
equation. In this section we summarize the method. Readers interested
in a more detailed discussion of this approach should
see~\cite{LotstedtFERDME2009}.

In the absence of chemical reactions, the Brownian motions of
individual molecules are independent processes. It is therefore
sufficient to derive a jump process (equivalently master equation)
approximation for a system in which there is only one diffusing
molecule.  Let $\Omega \subset \R^d$ be a bounded domain with boundary
$\partial \Omega$. We will denote by $p(\vx,t)$ the probability
density the molecule's position at time $t$ is $\vx$.  Assuming
reflecting boundary conditions on $\partial \Omega$, $p(\vx,t)$ then
satisfies the diffusion equation
\begin{equation}
  \begin{aligned}
    \PD{p}{t}(\vx,t) &= D\Delta p(\vx,t), &\forall \vx \in \Omega,\, t > 0 \\
    \nabla p(\vx,t) \cdot \veta(\vx) &= 0, &\forall \vx \in \partial\Omega, \, t > 0,
  \end{aligned}
  \label{eq:diffEqn}
\end{equation}
where $D$ denotes the molecule's diffusion constant (having units of
area per unit time), and $\veta(\vx)$ the outward normal at
$\vx \in \partial \Omega$.

\begin{figure}[tbp]
  \centering
  \includegraphics[scale=0.42]{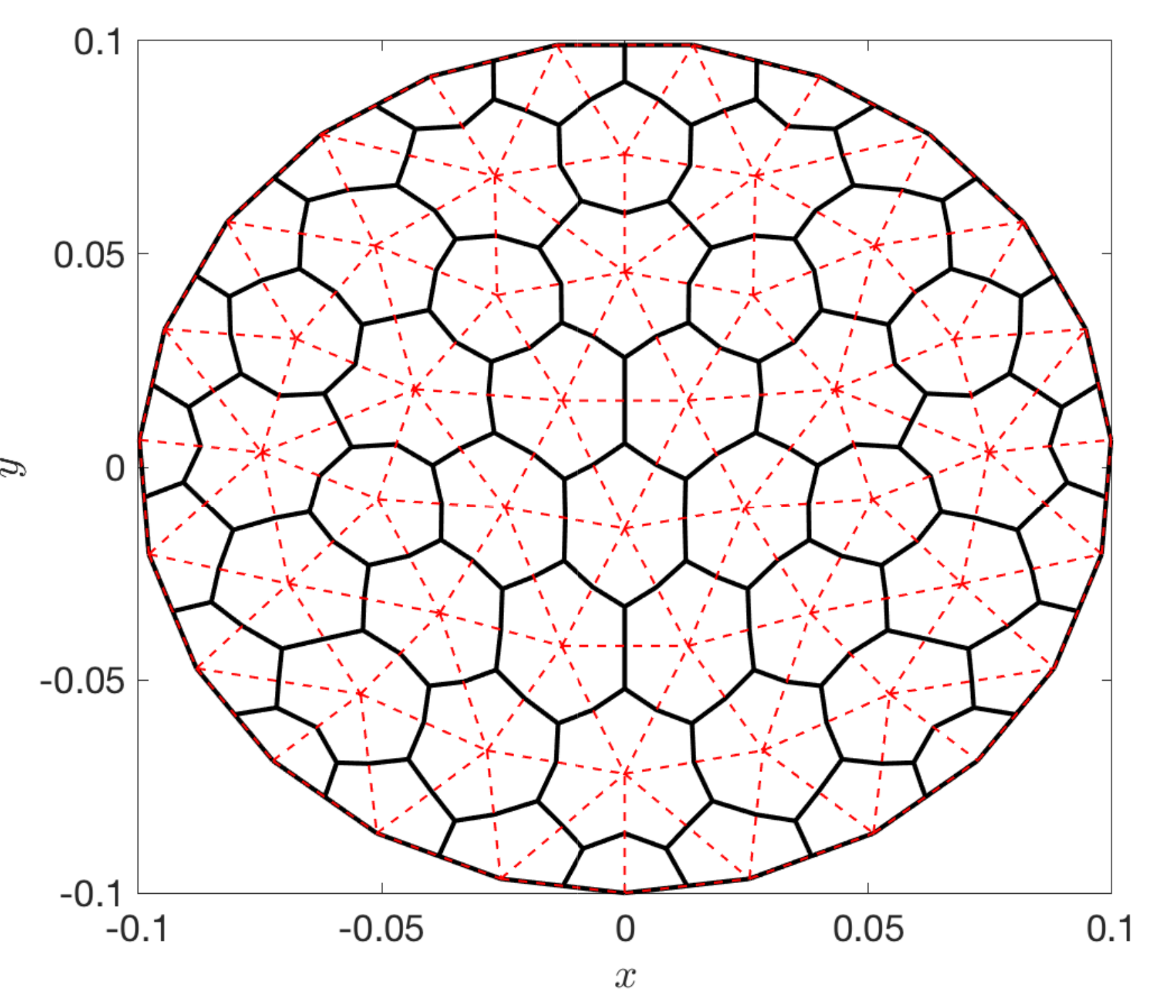}.
  \caption{An example of the dual mesh. The primal mesh is shown in
    dashed lines. The dual mesh is drawn in solid lines. Note, edges
    of triangles on the boundary are also within the primal mesh. }
  \label{fig:dualMeshEx}
\end{figure}
For simplicity, in discretizing~\cref{eq:diffEqn} to have the form of
a master equation we will focus on meshes in two--dimensions, $d = 2$.
Note, however, the final formulas we derive are also valid for $d=3$,
see~\cite{LotstedtFERDME2009}. We discretize $\Omega$ into a primal
mesh, given by a collection of triangles (red dashed lines in
\cref{fig:dualMeshEx}).  Let $\{\vx_i\}_{i = 1,\dots,K}$ label the
nodes of the mesh, corresponding to vertices of the triangles. We
define the dual mesh to consist of polygonal voxels
$\{V_{i}\}_{i = 1,\dots,K}$ in the interior of $\Omega$, with voxel
$V_{i}$ containing node $\vx_i$. Away from $\partial \Omega$, edges of
the polygonal voxel \commentb{containing} $\vx_i$ are given by lines connecting the
centroid of each triangle for which $\vx_i$ is a vertex to the
midpoint of each of that triangle's two edges that contain $\vx_i$
(black lines in \cref{fig:dualMeshEx}). For vertices that lie on the
boundary of the primal mesh, the corresponding polygonal voxel also
includes lines along the boundary connecting the vertex to the
midpoint of each triangle edge containing the vertex.  In 1D, the
primal mesh is a set of intervals with vertices at two ends. The
corresponding dual mesh is also a set of intervals, but shifted with
respect to the primal mesh so that the center of the interval is given
by a vertex. In 3D, the primal mesh is a set of tetrahedrons and the
corresponding dual mesh is a set of
polyhedrons~\cite{LotstedtFERDME2009}.

A standard finite element discretization of \cref{eq:diffEqn} on the
primal mesh using piecewise linear elements gives a linear system of
ODEs to solve for the set of nodal values,
$p_h(\vx_i,t)\approx p(\vx_i,t)$. Let
$\vp_h(t) = \brac{p_h(\vx_1,t),\dots,p_h(\vx_K,t)}^{T}$ denote the
vector of nodal values. The finite element discretization
of~\cref{eq:diffEqn} gives the semi-discrete diffusion equation
\begin{equation} \label{eq:semidisdiffEqn}
  M\D{\vp_h}{t}(t) = D S \, \vp_h(t),
\end{equation}
where $M$ denotes the mass matrix and $S$ the stiffness matrix. Under
suitable conditions on the mesh and domain this gives a second-order
discretization in space, with both matrices symmetric, $M$ positive
definite, and $S$ negative semi-definite.

The system \cref{eq:semidisdiffEqn} can be further simplified by
introducing mass lumping. $M$ is replaced by a diagonal lumped mass
matrix, $\Lambda$, where $\Lambda_{ii} = \sum_{j = 1}^KM_{ij}$. In
one-dimension, $\Lambda_{ii}$ gives the length of the dual mesh
element $V_i$. In two-dimensions, $\Lambda_{ii}$ gives the area of the
polygonal voxel $V_{i}$, while in three-dimensions, $\Lambda_{ii}$
gives the volume of polyhedral voxel
$V_{i}$~\cite{LotstedtFERDME2009}. Inverting the lumped mass matrix,
we obtain a simplified semidiscrete diffusion equation
\begin{equation}
  \D{\vp_h}{t}(t) = D \Delta_h \vp_h(t),
  \label{eq:lumpsemidisdiffEqn}
\end{equation}
where $\Delta_h$ can be interpreted as a discrete Laplacian,
\begin{equation}
  \Delta_h := \Lambda^{-1}S.
  \label{eq:disLapFEM}
\end{equation}

To arrive at a master equation approximation to the diffusion
equation, we define $P_{i}(t)$ to be the probability that the
diffusing molecule is in voxel $V_{i}$ at time $t$. 
We make the approximation that $P_{i}(t) = p_h(\vx_i,t) |V_{i}|$,
where $\abs{V_i}$ denotes the area of voxel $V_i$ (length in 1D or
volume in 3D). Let
$\vec{P}(t) = \brac{P_{1}(t),\dots,P_{K}(t)}^{T} = \Lambda \vp_h(t)$
denote the vector of probabilities to be in each voxel. As $S$ is
symmetric and $\Lambda^{-1}$ diagonal, \cref{eq:lumpsemidisdiffEqn}
then simplifies to the master equation
\begin{equation}
  \D{\vec{P}}{t} = D S \Lambda^{-1} \vec{P}(t) = D \Delta_h^{T} \vec{P}(t). 
  \label{eq:approxDiffEqn}
\end{equation}
For $i \neq j$, 
\begin{equation} \label{eq:diffHopRate}
  D (\Delta_h^{T})_{i j} = \frac{D S_{i j}}{\abs{V_j}} 
\end{equation}
then gives the probability per time, i.e. transition rate or hopping
rate, for a molecule in voxel $j$ to hop to voxel $i$, while
\begin{equation*}
  -D (\Delta_h^T)_{i i} = \frac{D}{\abs{V_{i}}} \sum_{j \neq i} S_{j i} 
\end{equation*}
gives the total probability per time for a molecule in $V_i$ to hop to
a neighboring voxel~\cite{LotstedtFERDME2009}. $D \Delta_h^T$ may then
be interpreted as the transition rate matrix for a continuous-time
random walk by the molecule between voxels of the lattice.

It is important to note that using an unstructured mesh to approximate
complex geometries can lead to negative values for
$(\Delta_h^T)_{ij}$, and hence negative transition rates, when using
piecewise linear finite elements. In~\cite{LotstedtFERDME2009} this
problem is resolved by modifying the transition matrix $\Delta_h^T$
when $(\Delta_h^T)_{ij} < 0$ so that $(\Delta_h^T)_{ij} = 0$ and
$(\Delta_h^T)_{ii} = -\sum_{j \neq i}(\Delta_h^T)_{ij}$.  Recently,
more accurate methods for eliminating negative discretization weights
were developed in~\cite{MeineckeEngblom2016}. For primal meshes given
by Delaunay triangulations in 2D, the transition rates
$(\Delta_h^T)_{i j}$ between voxels of the dual mesh defined by
\cref{fig:dualMeshEx} can be shown to always be non-negative, see
the discussion and references in~\cite{LotstedtFERDME2009}. For
simplicity, all examples subsequently considered in this work use
meshes that correspond to Delaunay triangulations in 2D to avoid this
potential complication.
\begin{remark}
  While in the remainder we shall assume that diffusive hopping rates
  are between elements of the dual mesh and given
  by~\cref{eq:diffHopRate}, there are a number of alternative methods
  one could use for determining spatial hopping rates in general
  geometries. These include the finite volume Cartesian grid cut-cell
  method of~\cite{IsaacsonSJSC2006} and the unstructured grid finite
  volume approach of~\cite{DeSchutterSTEPS2012uh}. The method we
  describe for approximating reversible bimolecular reactions in the
  next section can be used without modification with any of these
  alternative methods for determining spatial hopping rates.
\end{remark}

\section{Reversible Reactions on Unstructured
  Meshes} \label{S:methodSect} Having established how we will
approximate the spatial movement of molecules by a continuous-time
random walk (i.e. master equation), we now focus on developing a
convergent jump process (i.e. convergent master equation)
approximation for reversible bimolecular reactions. We begin by
formulating the general spatially-continuous volume-reactivity model
in \cref{S:absVolReactModel}, and the specific instance of this
model popularized by Doi~\cite{DoiSecondQuantA,DoiSecondQuantB}
in \cref{S:volreactDoiModel}. In \cref{S:rxDiscretMethod} we use
a finite volume discretization method to develop a master equation
approximation to the reaction terms on general unstructured polygonal
meshes.  The resulting discretization weights correspond to transition
rates for reactions to occur between two molecules within voxels of
the mesh. The finite volume discretization we present extends the
method we developed for Cartesian grids in~\cite{IsaacsonCRDME2013} to
reversible reactions on unstructured polygonal grids. In
\cref{S:unstructCRDME} we combine the spatial discretization
method from \cref{S:diffApprox} with the reaction
discretization method developed in \cref{S:rxDiscretMethod} to
derive the convergent reaction-diffusion master equation (CRDME) jump
process approximation to the volume--reactivity model. For simplicity,
we derive the CRDME for the reversible bimolecular reaction
$A + B \rightleftharpoons C$ in a system whose state is either one
molecule of \textrm{A} and one molecule of \textrm{B}, or one molecule
of the complex \textrm{C}. In \cref{ap:multipartModel}, we
show that knowing transition rates for this simplified model is
sufficient to fully determine a corresponding set of transition rates
for systems with arbitrary numbers of molecules.

\Cref{S:crdmeTorRDME} discusses the relationship between the CRDME and
the popular lattice reaction-diffusion master equation (RDME) model,
which can be interpreted as a coarse-mesh approximation to the
CRDME. Finally, in \cref{S:implementation} we discuss several
implementation details, including the numerical method we use to
evaluate the reactive transition rate formulas derived in
\cref{S:rxDiscretMethod} for reversible reactions in the CRDME.

\subsection{Abstract Volume--Reactivity  Model} \label{S:absVolReactModel} 
Consider the $\textrm{A} + \textrm{B} \leftrightarrows \textrm{C}$
reaction in a system with just one \textrm{A} and one \textrm{B}
molecule, or one \textrm{C} complex.  As in \cref{S:diffApprox}, we
assume the reaction is taking place within a $d$-dimensional bounded
domain $\Omega \subset \R^d$, with molecules experiencing a reflecting
Neumann boundary condition on the boundary $\partial \Omega$. Denote
by $\vx \in \Omega$ the position of the molecule of species
\textrm{A}, by $\vy \in \Omega$ the position of the molecule of
species \textrm{B}, and by $\vz \in \Omega$ the position of the
molecule of species \textrm{C}. The diffusion constants of the
molecules are given by $\DA$, $\DB$, and $\DC$ respectively. We let
$\kp(\vz\vert\vx,\vy)$ denote the probability density per unit time an
$A + B \rightarrow C$ reaction successfully occurs producing a
\textrm{C} molecule at $\vz$, given an \textrm{A} molecule at $\vx$
and a \textrm{B} molecule at $\vy$. With this definition, we denote by
$\kp(\vx,\vy)$ the probability per time an \textrm{A} molecule at
$\vx$ and a \textrm{B} molecule at $\vy$ react and create a \textrm{C}
molecule within $\Omega$. Then
\begin{equation} \label{eq:kpxyDef}
  \kp(\vx,\vy) := \int_{\Omega} \kp(\vz\vert\vx,\vy) \, d\vz.
\end{equation}

Similarly, we let $\km(\vx,\vy\vert\vz)$ denote the probability
density per unit time a reaction successfully occurs producing an
\textrm{A} molecule at $\vx$ and a \textrm{B} molecule at $\vy$ given
a \textrm{C} molecule at $\vz$. With this definition, we denote by
$\km(\vz)$ the probability per time a \textrm{C} molecule at $\vz$
unbinds and produces \textrm{A} and \textrm{B} molecules within
$\Omega$. Then
\begin{equation*}
  \km(\vz) := \int_{\Omega^2} \km(\vx,\vy\vert\vz) \, d\vx \, d\vy,
\end{equation*}
where $\Omega^2 := \Omega \times \Omega \subset \R^{2d}$.

Using the preceding definitions we can now formulate the general
volume-reactivity model. Let $p(\vx,\vy,t)$ denote the probability
density that system is in the state where the \textrm{A} and
\textrm{B} molecules are unbound, and located at positions $\vx$ and
$\vy$ at time $t$. Likewise, define $\pb(\vz,t)$ to be the probability
density that the molecules are bound together, and that the
corresponding \textrm{C} molecule is at position $\vz$ at time
$t$. Then $p(\vx,\vy,t)$ satisfies
\begin{equation}
  \label{eq:pPDE}
  \begin{aligned}
    \PD{p}{t}(\vx, \vy, t) = (\DA\Delta_{\vx} + \DB\Delta_{\vy})p(\vx,\vy,t) 
    &- \kp(\vx,\vy)p(\vx,\vy,t) 
    + \int_{\Omega}\km(\vx,\vy\vert\vz)\pb(\vz,t) \, d\vz,
  \end{aligned}
\end{equation}
and $\pb(\vz,t)$ satisfies
\begin{equation}
  \PD{\pb}{t}(\vz,t) = \DC\Delta_{\vz}\pb(\vz,t) - \km(\vz)\pb(\vz,t) 
  + \int_{\Omega^2}\kp(\vz\vert\vx,\vy)p(\vx,\vy,t) \, d\vx \, d\vy,
  \label{eq:pbPDE}
\end{equation}
together with appropriate initial conditions and no-flux reflecting
boundary conditions on $\partial \Omega$ for each molecule.

In the next section we present a special case of the preceding model,
where $\kp(\vz|\vx,\vy)$ is chosen such that two molecules which are
sufficiently close will react with a fixed probability per unit
time~\cite{TeramotoDoiModel1967,DoiSecondQuantA,DoiSecondQuantB,ErbanChapman2009}.
This ``standard'' form of the volume--reactivity model is also known
as the Doi model.  Note, the abstract formulation presented above also
encompasses alternative bimolecular reaction models, such as the
Gaussian--type interactions we used
in~\cite{IsaacsonGoyetteDuskek2016} to model reactions involving
unstructured tails of membrane bound proteins.

\subsection{Standard Volume--Reactivity Reaction Rate Functions (Doi Model)}
\label{S:volreactDoiModel}
In the standard volume--reactivity (Doi) model, \textrm{A} and
\textrm{B} are assumed to react with probability per unit time
$\lambda$ when within a distance $\rb$, commonly called the reaction
radius.  Let
$\Rset = \{ (\vx,\vy) \in \Omega^2 \,\vert\, \abs{\vx-\vy} < \rb\}
\subset \R^{2d}$
denote the effective reaction region, and $\ind_{\Rset}(\vx,\vy)$
denote the indicator function of $\Rset$. A common choice for
$\kp(\vz\vert\vx,\vy)$ is
\begin{align} 
  \kp(\vz\vert\vx,\vy) &= \lambda \ind_{\Rset}(\vx, \vy) \delta
                         \paren{\vz - (\gamma\vx + (1-\gamma)\vy)}, \label{eq:Doikp}
\end{align}
where $\gamma \in [0, 1]$ determines the placement of the newly
created \textrm{C} molecule relative to the locations of the
\textrm{A} and \textrm{B} molecules.  One simple choice is to take
$\gamma = \frac{1}{2}$, so that the \textrm{C} molecule is placed at
the midpoint between the \textrm{A} and \textrm{B} molecules. Another
common choice is to use the diffusion weighted center of
mass~\cite{AndrewsBrayPhysBio2004}
\begin{equation} \label{eq:diffWeightCentMass}
  \gamma = \frac{\DB}{\DA + \DB}.
\end{equation}
For $\gamma$ fixed, the probability per time that an \textrm{A} molecule
at $\vx \in \Omega$ and a \textrm{B} molecule at $\vy \in \Omega$ successfully
react is then
\begin{equation} \label{eq:Doikpxy}
  \kp(\vx,\vy) = \lambda \ind_{\Rset}(\vx, \vy) \ind_{\Omega}\paren{\gamma \vx + (1-\gamma) \vy)}.
\end{equation}
Here the second indicator function enforces that the reaction can only
occur if the location the product \textrm{C} molecule would be placed
at is within $\Omega$. If $\Omega$ is convex this is guaranteed. If
$\Omega$ is not convex, this association reaction model can be
interpreted as a two-step process; the molecules attempt to react with
probability per unit time $\lambda$ when within $\rb$, and if the
product location is within the domain the reaction is allowed to
proceed. If the product location is outside the domain the reaction
event is rejected.

The dissociation of the \textrm{C} molecule back into \textrm{A} and
\textrm{B} molecules is assumed to occur with probability per unit
time $\mu$.  Several different models have been used to specify the
placement of newly created \textrm{A} and \textrm{B} molecules when
dissociation occurs. The simplest choice would be to place them at the
location of the \textrm{C} molecule at the time of
unbinding~\cite{ErbanChapman2011}, which we call point unbinding. In
this case
\begin{align}
  \km(\vx,\vy\vert\vz) &= \mu \delta(\vx - \vy) \delta(\vy - \vz).
  \label{eq:ptUnbind}
\end{align}
The probability per time that a \textrm{C} molecule at
$\vz \in \Omega$ successfully dissociates is then
\begin{align*}
\km(\vz) = \mu \int_{\Omega^2} \delta(\vx - \vy) \delta(\vy - \vz) \, d\vx \, d\vy = \mu.
\end{align*}

In the remainder we focus on what we call the uniform unbinding
model. Here the position of the \textrm{A} molecule is sampled from a
uniform distribution within the ball of radius $(1-\gamma)\rb$ about
the position of the \textrm{C} molecule,
$B_{(1-\gamma)\rb}(\vz) := \{ \vx \in \R^d \,\vert \abs{\vz-\vx} <
(1-\gamma)\rb\}$.
The position of the \textrm{B} molecule is then chosen by reflection
so that $\vz = \gamma \vx + (1-\gamma) \vy$. We then have
\begin{align}
  \km(\vx,\vy\vert\vz) &= \mu \brac{\frac{1}{\abs{\ballunbind(\vO)}}\ind_{\ballunbind(\vz)}(\vx)}
                         \delta\paren{\vy - \frac{\vz - \gamma\vx}{1-\gamma}}. \label{eq:Doikm}
\end{align}
One complication with this choice is that when $\vz$ is sufficiently
close to $\partial \Omega$, the position of one or both of the
\textrm{A} and \textrm{B} molecules may end up outside the domain.  In
this case a natural choice that is consistent with the preceding
definitions is to simply reject the dissociation
event~\cite{IsaacsonZhang17}. $\km(\vz)$ is therefore
reduced relative to the point unbinding case,
\begin{align*}
  \km(\vz) = \frac{\mu}{\abs{\ballunbind(\vO)}} \int_{\Omega} \ind_{\ballunbind(\vz)}(\vx)
  \ind_{\Omega} \paren{\frac{\vz - \gamma\vx}{1-\gamma}} \, d\vx \leq \mu.
\end{align*}
For points $\vz$ that are sufficiently far from the boundary, or if
$\Omega = \R^d$, this simplifies to $\km(\vz) = \mu$ and unbinding
events are always successful. The combination of the standard (Doi)
association model with rejection of unbinding events that produce
molecules outside the domain can be shown to imply point-wise
detailed balance of the resulting reversible binding
reaction~\cite{IsaacsonZhang17}.

\begin{remark}
  With the choices~\cref{eq:Doikp}, \cref{eq:diffWeightCentMass}
  and~\cref{eq:Doikm}, $\gamma = 0$ indicates that the \textrm{B}
  molecule is not diffusing. Upon binding, the \textrm{C} molecule is
  therefore placed at $\vy$. On the other hand, $\gamma = 1$ indicates
  that the \textrm{A} molecule is not diffusing, and one needs to
  interchange $\vx$ and $\vy$ in~\cref{eq:Doikm}. Such choices would
  be appropriate if one of the \textrm{A} or \textrm{B} molecules
  represents a stationary target.
\end{remark}

\subsection{Discretization of Reaction Terms to Master Equation} 
\label{S:rxDiscretMethod}
We now develop a master equation approximation to the reaction terms
of the general volume--reactivity model on polygonal unstructured
meshes. This is achieved by developing a finite volume discretization
of~\cref{eq:pPDE} and~\cref{eq:pbPDE} that has the general form of a
master equation for a jump process.  We discretize $\Omega$ into a
polygonal mesh of $K$ voxels labeled by $V_{i}$,
$i \in \{1,\dots,K\}$, with corresponding centroids
$\{\vx_i\}_{i=1,\dots,K}$.  As we will often need to consider the
phase-space voxels that pairs or triplets of molecules are located
within, we let $V_{ij} = V_{i} \times V_{j}$ and
$V_{ijk} = V_{i} \times V_{j} \times V_{k}$, with corresponding
centroids labeled by $(\vx_{i}, \vy_{j})$ and
$(\vx_{i}, \vy_{j}, \vz_k)$. With these definitions we make the
well-mixed approximation that the probability densities,
$p(\vx,\vy,t)$ and $\pb(\vz,t)$, are piecewise constant within each
mesh voxel, $V_{i j}$ and $V_k$ respectively.  The probability the
system is in the unbound state with the \textrm{A} molecule in $V_{i}$
and the \textrm{B} molecule in $V_{j}$ at time $t$ is then
approximated by
\begin{equation} \label{eq:PijDef}
  P_{ij}(t) = \int_{V_{ij}} p(\vx,\vy,t) \, d\vx \, d\vy
  \approx p(\vx_{i}, \vy_{j},t) \abs{V_{ij}}.
\end{equation}
Similarly, we denote by $\vz_{k}$ the centroid of $V_{k}$. The
probability density the system is in the bound state with the
\textrm{C} molecule in $V_{k}$ at time $t$ is then approximated by
\begin{equation} \label{eq:PbkDef}
  P_{\textrm{b}k}(t) = \int_{V_{k}} \pb(\vz,t) \, d\vz \approx \pb(\vz_{k},t) \abs{V_{k}}.
\end{equation}

In what follows we drop the diffusive terms in~\cref{eq:pPDE}
and~\cref{eq:pbPDE} as we will ultimately approximate them through
the finite element method of \cref{S:diffApprox}. In the next
section we illustrate the final combined model with both spatial
(diffusive) transport and chemical reactions. With this
simplification, we construct a finite volume discretization
of~\cref{eq:pPDE} and~\cref{eq:pbPDE} by integrating both sides
of~\cref{eq:pPDE} and~\cref{eq:pbPDE} over $V_{ij}$ and $V_{k}$
respectively. \cref{eq:pPDE} is approximated by
\begin{equation}   \label{eq:CRDMEfwdRxs}
  \begin{aligned}
    \D{P_{ij}}{t} &= 
    -\int_{V_{ij}}\kp(\vx,\vy)p(\vx,\vy,t) \, d\vx \, d\vy 
    + \int_{\Omega}\brac{\int_{V_{ij}}\km(\vx,\vy\vert\vz) \, d\vx d\vy}\pb(\vz,t) \, d\vz \\
    &\approx -\frac{1}{\abs{V_{ij}}}P_{ij}(t)\int_{V_{ij}}\kp(\vx,\vy) \, d\vx \, d\vy 
    + \sum_{k}\frac{1}{\abs{V_{k}}} 
    P_{\textrm{b}k}(t)\int_{V_{ijk}}\km(\vx,\vy\vert\vz) \, d\vx \, d\vy \, d\vz \\
    &= -\kpij P_{ij}(t) + \sum_{k} \kmijk P_{\textrm{b}k}(t),
  \end{aligned}
\end{equation}
where
\begin{align}
  \kpij &:= \frac{1}{\abs{V_{ij}}} \int_{V_{ij}}\kp(\vx,\vy) \, d\vx \, d\vy \label{eq:kpijDef} \\ 
  \kmijk &:= \frac{1}{\abs{V_{k}}} \int_{V_{ijk}}\km(\vx,\vy\vert\vz) \, d\vx \, d\vy \, d\vz. \label{eq:kmijkDef}
\end{align}
One can interpret $\kpij$ as the probability per unit time that given
an \textrm{A} molecule in $V_{i}$ and a \textrm{B} molecule in
$V_{j}$, they react to produce a \textrm{C} molecule in
$\Omega$. Similarly, $\kmijk$ gives the probability per unit time that
given a \textrm{C} molecule in $V_{k}$, it dissociates into an
\textrm{A} molecule in $V_{i}$ and a \textrm{B} molecule in $V_{j}$.

The reaction terms of~\cref{eq:pbPDE} are approximated by
\begin{equation}   \label{eq:CRDMEbwdRxs}
  \begin{aligned}
    \D{P_{\textrm{b}k}}{t} &= 
    -\int_{V_{k}}\km(\vz)\pb(\vz,t) \, d\vz 
    + \int_{\Omega^2}\brac{\int_{V_{k}}\kp(\vz\vert\vx,\vy) \, d\vz}p(\vx,\vy,t) \, d\vx \, d\vy\\
    &\approx -\frac{1}{\abs{V_{k}}}P_{\textrm{b}k}(t) \int_{V_{k}}\km(\vz) \, d\vz 
    + \sum_{i, j}\frac{1}{\abs{V_{ij}}}P_{ij}(t) 
    \int_{V_{ijk}} \kp(\vz \vert \vx,\vy) \, d\vx \, d\vy \, d\vz \\
    &= -\kmk P_{\textrm{b}k}(t) + \sum_{i, j} \kpijk P_{ij}(t),
  \end{aligned}
\end{equation}
where
\begin{align}
  \kmk &:= \frac{1}{\abs{V_{k}}} \int_{V_{k}}\km(\vz) \, d\vz \label{eq:kmkDef} \\
  \kpijk &:= \frac{1}{\abs{V_{ij}}} \int_{V_{ijk}}\kp(\vz \vert \vx,\vy) \, d\vx \, d\vy \, d\vz. \label{eq:kpijkDef}
\end{align}
One can interpret $\kmk$ as the probability per unit time that given a
\textrm{C} molecule in $V_{k}$, it dissociates into \textrm{A} and
\textrm{B} molecules within $\Omega$.  Similarly, $\kpijk$ gives the
probability per unit time that given an \textrm{A} molecule in
$V_{i}$ and a \textrm{B} molecule in $V_{j}$, they react to
produce a \textrm{C} molecule in $V_{k}$.

Using the definitions of $\kp(\vx,\vy)$ and $\km(\vz)$, we have 
\begin{equation*}
  \int_{V_{ij}}\kp(\vx,\vy) \, d\vx \, d\vy 
  = \int_{V_{ij}}\brac{\int_{\Omega}\kp(\vz\vert\vx, \vy) \, d\vz}  d\vx \, d\vy 
  = \sum_{k}\kpijk \abs{V_{i j}},
\end{equation*}
and
\begin{equation*}
  \int_{V_{k}}\km(\vz) \, d\vz 
  = \int_{V_{k}}\brac{\int_{\Omega^2}\km(\vx, \vy\vert\vz) \, d\vx \, d\vy} \, d\vz 
  = \sum_{i, j}\kmijk \abs{V_{k}},
\end{equation*}
which gives that 
\begin{subequations} \label{eq:kpkmsums}
  \begin{align}
    \kpij &= \sum_{k}\kpijk \label{eq:kpkmsumsKpij}\\
    \kmk &= \sum_{i, j}\kmijk. \label{eq:kpkmsumsKmk}
  \end{align}
\end{subequations}
With these identities, we can identify the probability of placing a newly
created \textrm{C} molecule in 
$V_{k}$ given that an \textrm{A} molecule in $V_{i}$ and a \textrm{B} molecule in
$V_{j}$ react,
\begin{equation} \label{eq:kpkijDef}
  \kpkij := \frac{\kpijk}{\kpij}.
\end{equation}
Similarly, the probability of placing a newly created \textrm{A}
molecule in $V_{i}$ and a \textrm{B} molecule in $V_{j}$ given that
a \textrm{C} molecule in $V_{k}$ dissociated is
\begin{equation} \label{eq:kmkijDef}
  \kmkij := \frac{\kmijk}{\kmk}.
\end{equation}

The semi-discrete equations~\cref{eq:CRDMEfwdRxs}
and~\cref{eq:CRDMEbwdRxs} have the form of a master equation
(i.e. forward Kolmogorov equation) for a jump process corresponding to
the positions of the molecules and the current chemical state of the
system (unbound or bound). The transition rates (i.e. propensities)
$\{\kpij, \kmk, \kpijk, \kmijk\}$ and placement probabilities
$\{\kpkij, \kmkij\}$ then allow for the simulation of this process
using any of the many stochastic simulation algorithm (SSA)-based
methods, for
example~\cite{GillespieJPCHEM1977,GibsonBruckJPCHEM2002}. Note, there
are two statistically equivalent approaches one can take to use the
reversible binding model we've derived within simulations. In the
first approach one separates the association and dissociation
reactions from the placement of reaction products:
\begin{enumerate}
\item Given one \textrm{A} molecule in $V_{i}$ and one \textrm{B}
  molecule in $V_{j}$, the reaction $A_{i} + B_{j} \rightarrow C$
  occurs with transition rate $\kpij$.  Similarly, given one
  \textrm{C} molecule in $V_{k}$, the reaction
  $C_{k} \rightarrow A + B$ occurs with transition rate $\kmk$.
\item If $A_{i}$ and $B_{j}$ molecules react, place a \textrm{C}
  molecule in $V_{k}$ with probability $\kpkij$.  If a $C_{k}$
  molecule dissociates apart, place an \textrm{A} molecule in $V_{i}$
  and a \textrm{B} molecule in $V_{j}$ with probability $\kmkij$.
\end{enumerate}
Equivalently, the second approach expands the set of reactions to
include product placement within the transition rates:
\begin{enumerate}
\item Given one \textrm{A} molecule in $V_{i}$ and one \textrm{B}
  molecule in $V_{j}$, the reaction $A_{i} + B_{j} \rightarrow C_{k}$
  occurs with transition rate $\kpijk$.  Similarly, given one
  \textrm{C} molecule in $V_{k}$, the reaction
  $C_{k} \rightarrow A_{i} + B_{j}$ occurs with transition rate
  $\kmijk$.
\end{enumerate}
The first approach requires two sampling steps: selection of the
reaction to execute, and then placement of newly created molecules. In
contrast, the second approach requires only one sampling step but has
many more possible reactions. In the remainder, we use the first
algorithm for all reported simulations.

\subsection{Unstructured Mesh CRDME for Reversible Reactions}
\label{S:unstructCRDME}
To arrive at a final unstructured mesh CRDME for simulating the
reversible $\textrm{A} + \textrm{B} \leftrightarrows \textrm{C}$
reaction, we combine the finite element discretization for spatial
(diffusive) transport from \cref{S:diffApprox} with the finite volume
discretization of the reversible binding process developed in the
previous section. \emph{Both} discretizations are constructed on the
(dual) polygonal mesh of a triangulated primal mesh, see the
discussion in \cref{S:diffApprox}. Applying the finite element
discretization~\cref{eq:semidisdiffEqn} to each Laplacian
in~\cref{eq:pPDE} and~\cref{eq:pbPDE}, and using the reaction term
discretizations~\cref{eq:CRDMEfwdRxs} and~\cref{eq:CRDMEbwdRxs}, we
obtain the final master equation model
\begin{subequations} \label{eq:twopartCRDME}
  \begin{equation}     \label{eq:twopartCRDMEfwd}     
    \begin{alignedat}{2}
      \D{P_{ij}}{t} &= D^{\textrm{A}} \sum_{i'=1}^{K} \brac{(\Delta_h^T)_{i i'} P_{i'j}(t) 
        - (\Delta_h^T)_{i' i} P_{ij}(t)} 
      + D^{\textrm{B}} &\sum_{j'=1}^{K} \brac{ (\Delta_h^T)_{j j'} P_{ij'}(t) - (\Delta_h^T)_{j' j} P_{ij}(t)} \\
      &&-\kpij P_{ij}(t) + \sum_{k=1}^{K} \kmijk P_{\textrm{b}k}(t),
    \end{alignedat}
  \end{equation}
  \begin{equation}   \label{eq:twopartCRDMEbwd}
    \D{P_{\textrm{b}k}}{t} = 
    D^{\textrm{C}} \sum_{k'=1}^{K} \brac{(\Delta_h^T)_{k k'} P_{\textrm{b}k'}(t) 
      - (\Delta_h^T)_{k' k} P_{\textrm{b}k}(t)}
    -\kmk P_{\textrm{b}k}(t)  + \sum_{i,j=1}^{K} \kpijk P_{ij}(t).
  \end{equation}
\end{subequations}
Here $P_{ij}(t)$ gives the probability for the \textrm{A} and
\textrm{B} molecules to be in $V_{ij}$ at time $t$, and
$P_{\textrm{b}k}(t)$ the probability for the \textrm{C} molecule to be
in voxel $V_k$ at time $t$, see~\cref{eq:PijDef}
and~\cref{eq:PbkDef}. We call~\cref{eq:twopartCRDME} the convergent
reaction-diffusion master equation (CRDME). 

\begin{table}[tbp]
  \centering  
  \setlength\tabcolsep{3.5pt}
  \bgroup
  \renewcommand{\arraystretch}{1.2}
  \begin{tabular}{|c|c|c|c|}
    \hline
    & Transitions & Transition Rates & Upon Transition Event\\
    \hline
    \multirow{3}{*}{\parbox{2cm}{Diffusive hopping:}} 
    & $\textrm{A}_{j} \to \textrm{A}_i$ & $D^{\textrm{A}} (\Delta_h^T)_{ij} \, a_j$ 
                                   & $\textrm{A}_i := \textrm{A}_i + 1$, $\textrm{A}_j := \textrm{A}_j - 1$, \\ \cline{2-4}
    & $\textrm{B}_{j} \to \textrm{B}_i$ & $D^{\textrm{B}} (\Delta_h^T)_{ij} \, b_j$ 
                                   & $\textrm{B}_i := \textrm{B}_i + 1$, $\textrm{B}_j := \textrm{B}_j - 1$, \\ \cline{2-4}
    & $\textrm{C}_{j} \to \textrm{C}_i$ & $D^{\textrm{C}} (\Delta_h^T)_{ij} \, c_j$ 
                                   & $\textrm{C}_i := \textrm{C}_i + 1$, $\textrm{C}_j := \textrm{C}_j - 1$, \\ \cline{2-4}
    \hline 
    \multirow{6}{*}{\parbox{2cm}{Chemical Reactions:}}
    &\multirow{3}{*}{$\textrm{A}_i + \textrm{B}_j \to \textrm{C}$} & \multirow{3}{*}{$\kpij a_i b_j$} 
                                   & $\textrm{A}_i := \textrm{A}_i - 1$, $\textrm{B}_j := \textrm{B}_j - 1$.\\ 
    & & & Sample  $k$ from $\{\kpkij\}_{k=1,\dots,K}$.\\
    & & & Set $\textrm{C}_k := \textrm{C}_k + 1$. \\ \cline{2-4}
    &\multirow{3}{*}{$\textrm{C}_k \to \textrm{A} + \textrm{B}$} & \multirow{3}{*}{$\kmk c_k$} 
    & $\textrm{C}_k := \textrm{C}_k - 1$. \\
    & & & Sample $(i,j)$ from $\{\kmkij\}_{i,j=1,\dots,K}$. \\
    & & & Set $\textrm{A}_i := \textrm{A}_i + 1$, $\textrm{B}_j := \textrm{B}_j + 1$.\\
    \hline
  \end{tabular}
  \egroup
  \caption{ 
    Summary of diffusive and chemical transitions for the jump process approximation of
    the general multi-particle $\textrm{A} + \textrm{B} \leftrightarrows \textrm{C}$ reaction. 
    The statistics of this process are given by the corresponding forward Kolmogorov equation for the
    probability distribution, the multiparticle CRDME~\cref{eq:vPEq}. Here
    $a_i$ denotes the number of \textrm{A} molecules in voxel $V_i$, with $b_j$ and $c_k$ defined
    similarly. Transition rates give the probability per time for a transition to occur, often called
    propensities in the chemical kinetics literature.
    The final column explains how to update the system state upon occurrence of a transition event.
  }
  \label{tab:aAndBToCRxs}
\end{table}
While~\cref{eq:twopartCRDME} is specialized to a system containing
one \textrm{A} and one \textrm{B} molecule, or one \textrm{C} molecule
when the two are bound, it is straightforward to generalize the
equation to systems that include arbitrary numbers of each species. In
\cref{ap:multipartModel} we develop the corresponding
continuous particle dynamics equations for such systems, generalizing
the two molecule system given by~\cref{eq:pPDE}
and~\cref{eq:pbPDE}. The structure of the resulting
equation~\cref{eq:multipartABtoCEqs} includes only two-body
interactions, allowing the discretization method we used to
derive~\cref{eq:twopartCRDME} to be applied
to~\cref{eq:multipartABtoCEqs} to derive a general CRDME for systems
with arbitrary numbers of molecules~\cref{eq:vPEq}. The resulting set
of diffusive and chemical reactions, along with associated transition
rates (i.e. propensities), are summarized in
\cref{tab:aAndBToCRxs}. Notice, the only difference between the
general multi-particle system and the two-particle system is that the
transition rates are multiplied by the number of possible ways a given
transition can occur. Let $a_i$ denote the number of molecules of
species \textrm{A} in $V_i$, with $b_j$ and $c_k$ defined
similarly. For the forward reaction there are $a_i b_j$ possible pairs
of species \textrm{A} molecules in $V_i$ and species \textrm{B}
molecules in $V_j$ that can react. The new transition rate for the
$\textrm{A}_i + \textrm{B}_j \to \textrm{C}$ reaction is therefore
$\kp_{ij} a_i b_j$. Similarly there are $c_k$ possible dissociation
reactions for species \textrm{C} molecules in voxel $V_k$, giving a
new transition rate of $\km_k c_k$. Likewise, there are $a_j$ possible
hopping transitions of a molecule of species \textrm{A} from voxel
$V_j$ to $V_i$, giving a new diffusive transition rate of
$D^{\textrm{A}} (\Delta_h^T)_{i j} a_j$.

The set of transitions in \cref{tab:aAndBToCRxs} collectively
define a vector jump process for the number of molecules of each
species and their locations on the mesh. Let $A_i(t)$ represent the
stochastic process for the number of molecules of species \textrm{A}
in voxel $V_i$ at time $t$, and define $B_j(t)$ and $C_k(t)$
similarly. We denote by
\begin{equation*}
 \vec{W}(t) = \paren{A_1(t),\dots,A_K(t), B_1(t),\dots, B_K(t), C_1(t),\dots, C_K(t)} 
\end{equation*}
the stochastic process for the total system state at time $t$, and by
$\vec{w}$ a value of $\vec{W}(t)$, i.e. $\vec{W}(t)=\vec{w}$. The
master equation~\cref{eq:vPEq} then gives the probability that
$\vec{W}(t) = \vec{w}$. Implicit equations for the stochastic
processes that are components of $\vec{W}(t)$ can also be written,
which are equivalent in distribution to the master
equation~\cite{AndersonKurtzReview2011,AndersonKurtzBook2015}.

The coupled system of ODEs that correspond to the master
equation~\cref{eq:vPEq} for the
$\textrm{A} + \textrm{B} \leftrightarrows \textrm{C}$ reaction with
arbitrary numbers of molecules is too high-dimensional to solve
directly. Instead, the well-known Stochastic Simulation Algorithm
(SSA), also known as the Gillespie method or Kinetic Monte Carlo
method, and its many variants can be used to generate exact
realizations of
$\vec{W}(t)$~\cite{GillespieJPCHEM1977,KalosKMC75,GibsonBruckJPCHEM2002}.
For all numerical examples we subsequently consider in
\cref{S:numericExs} we use this method to directly simulate the
hopping of molecules between voxels and their chemical interactions.

  
\subsection{Relation to Reaction-Diffusion Master Equation (RDME)}
\label{S:crdmeTorRDME}
The CRDME is quite similar to the corresponding reaction-diffusion
master equation (RDME)
model~\cite{GardinerHANDBOOKSTOCH,GardinerRXDIFFME}, which also
satisfies~\cref{eq:twopartCRDME} but with the redefined transition
rates 
\commenta{
\begin{align*}
  \kpijk &= \frac{\beta^{+} \delta_{ij} \delta_{ik}}{\abs{V_i}}, & 
  \kmijk &= \beta^{-} \delta_{ik} \delta_{jk}.
\end{align*}
}
In the RDME \commenta{$\beta^{+}$} is usually taken to be the well-mixed rate
constant for the $\textrm{A} + \textrm{B} \to \textrm{C}$ reaction
(with units of volume per time in three-dimensions), \commenta{$\beta^{-}$} is
the dissociation rate (with units of inverse time), and $\delta_{ij}$
is the Kronecker delta function~\cite{IsaacsonRDMENote}. With these
choices, using~\cref{eq:kpkmsums} we find that
\commenta{
\begin{align*}
  \kpij &= \frac{\beta^{+} \delta_{ij}}{\abs{V_i}}, & 
  \kmk &= \beta^{-}.
\end{align*}
}
As such, the primary difference between the CRDME and the RDME is that
the latter only allows for chemical reactions between molecules within
the same voxel, while the former allows for reactions between
molecules located in nearby voxels.

It has been shown that this difference has the unfortunate drawback of
causing the RDME to lose bimolecular reactions in the continuum limit
that the voxel sizes approach zero (in two or more
dimensions)~\cite{IsaacsonRDMELims,IsaacsonRDMELimsII,Hellander:2012jk}. The
RDME is therefore not a convergent approximation to any reasonable
continuous particle dynamics model, though in practice it may give a
good approximation for voxel sizes that are neither too large nor
small~\cite{IsaacsonRDMELims,IsaacsonRDMELimsII,Hellander:2012jk}. As
explained in the introduction, the loss of bimolecular reactions is
due to the representation of molecules as point particles, and their
restriction to only react when in the same voxel. As the voxel size
approaches zero we recover a system of point particles moving by
Brownian motion, for which bimolecular reactions are only possible
when one \textrm{A} molecule and one \textrm{B} molecule are located
at the same point. The probability of the latter event occurring is
zero, so that the molecules simply never find each other to
react~\cite{Hellander:2012jk,IsaacsonRDMELims}.

While the RDME loses bimolecular reactions in the continuum limit that
the voxel size approaches zero, we demonstrated for the irreversible
$\textrm{A} + \textrm{B} \to \textrm{C}$ reaction on Cartesian meshes
that as the voxel size is \emph{coarsened} the RDME approaches the
corresponding CRDME for the standard volume-reactivity (Doi)
model~\cite{IsaacsonCRDME2013}. As such, we may interpret the RDME as
an approximation to the CRDME that is only valid for sufficiently
large voxel sizes (roughly corresponding to voxel sizes that are
significantly larger then the reaction radii for any bimolecular
reactions).

\subsection{Numerical Evaluation of Transition Rates}
\label{S:implementation}
To use the SSA to generate realizations of the jump process
corresponding to the CRDME~\cref{eq:twopartCRDME}, or its
multiparticle generalization~\cref{eq:vPEq}, requires the numerical
evaluation of the diffusive and reactive transition rates. The former
require the calculation of the matrix with entries
$(\Delta_h^T)_{i j} = (S \Lambda^{-1})_{i j}$. For all simulations
reported in this work we used the MATLAB linear finite element
implementation of~\cite{Alberty1999ff} to calculate the stiffness
($S$) and mass ($M$) matrices, from which the matrix $\Delta_h^T$
is then easily calculated.

The transitions we use to model chemical reactions, see
\cref{tab:aAndBToCRxs}, require the transition rates $\kpij$ and
$\kmk$, along with the reaction probabilities $\kpkij$ and
$\kmkij$. When $\kp(\vx,\vy)$ is a sufficiently smooth function, we
found that $\kpij$ could be easily evaluated by nesting MATLAB's
built-in two-dimensional numerical integration
routine~\texttt{integral2}
to evaluate the four-dimensional integral~\cref{eq:kpijDef}. While
the Doi model, in which $\kp(\vx,\vy)$ is discontinuous~\cref{eq:Doikp}, 
is perhaps the most popular volume-reactivity model, smooth interactions
do arise in applications. For example, in~\cite{IsaacsonGoyetteDuskek2016} 
we used the Gaussian interaction 
\begin{equation}
  \kp(\vx, \vy) = \lambda
  \paren{\frac{3}{2\pi}}^{3/2}\frac{1}{\rb^3}e^{-\frac{3\abs{\vx-\vy}^2}{2\rb^2}}
  \label{eq:GaussInteract}
\end{equation}
to model bimolecular reaction rates between membrane-bounded tethered
signaling molecules with unstructured tails. (Here $\lambda$
corresponds to a catalytic rate constant, with units of volume per
time, and $\rb$ defines the width of the Gaussian interaction.)

In the Doi model variant of the volume-reactivity model $\kp(\vx,\vy)$
is given by~\cref{eq:Doikpxy}.
The corresponding association reaction transition rate in the CRDME is
then
\begin{equation*}
  \kpij = \frac{\lambda}{\abs{V_{i j}}} \int_{\Rset \cap V_{i j}} 
  \ind_{\Omega}(\gamma \vx + (1-\gamma) \vy) \, d\vx \, d \vy. 
\end{equation*}
In the special case that $\Omega$ is convex, $\kpij$ simplifies to
\begin{equation} \label{eq:kpijConvex}
  \kpij = \frac{\lambda \abs{\Rset \cap V_{i j}}}{\abs{V_{i j}}} 
= \frac{\lambda}{\abs{V_{i j}}} \int_{V_i} \abs{B_{\rb}(\vx) \cap V_j} \, d\vx,
\end{equation}
the same formula we derived for Cartesian grids
in~\cite{IsaacsonCRDME2013}. Here $\abs{B_{\rb}(\vx) \cap V_j}$
denotes the area of intersection between a disk of radius $\rb$ about
$\vx$ and the voxel $V_j$.  In \cref{ap:hypervolCalc} we
describe how we evaluate the hyper-volume $\abs{\Rset \cap V_{i j}}$
in practice by using this representation as the two-dimensional
integral of an area of intersection. For domains in which $\Omega$ is
not convex, we found it easiest to numerically evaluate $\kpijk$
directly and then use~\cref{eq:kpkmsumsKpij} to calculate $\kpij$.

Evaluating $\kpijk$ for the Doi volume-reactivity model requires the
numerical evaluation of the integral
in~\cref{eq:kpijkDef}. Using~\cref{eq:Doikp} we find
\begin{align*}
  \kpijk &= \frac{\lambda}{\abs{V_{i j}}} \int_{\Rset \cap V_{i j}} 
  \ind_{V_k} \paren{\gamma \vx + (1-\gamma) \vy} \, d\vx \, d\vy \\
         &= \frac{\lambda}{\abs{V_{i j}}} \int_{V_i} \abs{B_{\rb}(\vx) 
           \cap V_j \cap \hat{V}_k(\vx)} \, d\vx,
\end{align*}
where $\hat{V}_k(\vx)$ denotes the translated and dilated set
\begin{equation} \label{eq:VhatDef}
  \hat{V}_k(\vx) = \left\{ \frac{\vy - \gamma \vx}{1 - \gamma} \middle|\, \vy \in V_k \right\}. 
\end{equation}
We evaluated $\kpijk$ through this representation as the
two-dimensional integral of an area of intersection function. Since
both $\hat{V}_k(\vx)$ and $V_j$ are polygons, their intersection is
also one or more polygon(s), and as such the integrand
$\abs{B_{\rb}(\vx) \cap V_j \cap \hat{V}_k(\vx)}$ can be reduced to a
sum of areas of intersections between the disk $B_{\rb}(\vx)$ and
polygons. This allows the direct reuse of the code we developed for
evaluating $\abs{B_{\rb}(\vx) \cap V_j}$. The details of our method
for evaluating the integral are described in
\cref{ap:hypervolCalc}. Knowing both $\kpijk$ and $\kpij$ then allowed
the evaluation of the placement probability $\kpkij$
using~\cref{eq:kpkijDef}.

There are a number of equivalent methods one could use to generate
samples of the jump process for the dissociation reaction
$\textrm{C}_k \to \textrm{A} + \textrm{B}$ with rate $\kmk$ and
placement probabilities $\kmkij$. One approach would be to numerically
evaluate the integral~\cref{eq:kmijkDef}, use~\cref{eq:kpkmsumsKmk} to
evaluate $\kmk$ and use~\cref{eq:kmkijDef} to evaluate $\kmkij$. In
practice we found it simpler to sample a possible time for the next
unbinding reaction using the dissociation rate, $\mu$, and then
exploit the well-mixed approximation for placing reaction
products. The domain boundary is ignored initially, and the \textrm{A}
and \textrm{B} molecules are placed at sampled locations $\vx$ and
$\vy$.  A given reaction event is then rejected if one of $\vx$ or
$\vy$ is outside the domain. If both molecules are placed inside the
domain, the voxel $V_i$ containing $\vx$ and voxel $V_j$ containing
$\vy$ are determined, and both $\textrm{A}_i$ and $\textrm{B}_j$ are
updated. Our precise sampling method is given in
\cref{alg:CkToAandBRx}.
\begin{algorithm}[tbp]
  \caption{Sampling next \emph{possible}
    $\textrm{C}_k \to \textrm{A} + \textrm{B}$ reaction time, $\tau$,
    and product voxel locations, $(V_i,V_j)$.}
  \label{alg:CkToAandBRx}
  \begin{algorithmic}[1]
    \State{Sample \emph{candidate} next reaction time $\tau$ from an
      exponential distribution with rate $\mu$,
      \begin{equation*}
        \tau := \frac{-1}{\mu} \ln\paren{\mathcal{U}_{[0,1)}},
      \end{equation*}      
      where $\mathcal{U}_{[0,1)}$ denotes a uniform random number on $[0,1)$. 
    }     
    \State{Sample the position $\vz$ of the $\textrm{C}_k$ molecule
      within $V_k$ using the well-mixed assumption; i.e. from
      \begin{equation*}
        \frac{1}{\abs{V_k}} \ind_{V_k}(\vz).
      \end{equation*}
    } \State{Given $\vz$, sample the position $\vx$ of the \textrm{A}
      molecule from a uniform distribution within the ball of radius
      $(1-\gamma) \rb$ about $\vz$; i.e. from
      \begin{equation*}
        \frac{1}{\abs{B_{(1-\gamma)\rb}(\vO)}} \ind_{B_{(1-\gamma)\rb}(\vz)}(\vx).
      \end{equation*}
    }   
    \If{$\vx \in \Omega$}
    \State{Given $\vx$ and $\vz$, the position of the \textrm{B} molecule
      is $\vy := (1-\gamma)^{-1} (\vz - \gamma \vx)$.}
    \If{$\vy \in \Omega$}
    \State{Determine which $V_i$ and $V_j$ contain $\vx$ and $\vy$.}
    \State{\Return{$V_i$, $V_j$, and $\tau$.}}
    \EndIf              
    \EndIf
    \State{\Return that no reaction occurs.}
  \end{algorithmic}
\end{algorithm}
In the following theorem we prove that this sampling procedure is
equivalent to directly sampling $\kmijk$ (which in turn is equivalent
to sampling $\kmk$ and $\kmkij$).
\begin{theorem}
  The probability per time a \textrm{C} molecule located in $V_k$
  reacts to produce an \textrm{A} molecule in $V_i$ and a \textrm{B}
  molecule in $V_j$ in \cref{alg:CkToAandBRx} is $\kmijk$.
\end{theorem}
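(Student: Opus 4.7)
The plan is to compute directly the probability per unit time produced by \cref{alg:CkToAandBRx} for the specific outcome ``$\vx\in V_i$ and $\vy\in V_j$'' and then match it against the definition~\cref{eq:kmijkDef} with $\km(\vx,\vy\vert\vz)$ given by the uniform-unbinding kernel~\cref{eq:Doikm}.

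First I would invoke the thinning property of the exponential waiting time. Since $\tau$ is sampled with rate $\mu$ and the subsequent sampling of $\vz,\vx$ is independent of $\tau$, the effective rate at which the algorithm outputs a successful event with $(\vx,\vy)\in V_i\times V_j$ is
\begin{equation*}
  \mu \cdot \mathrm{Pr}\paren{\vx\in V_i,\ \vy\in V_j,\ \vx\in\Omega,\ \vy\in\Omega\,\vert\, \textrm{$\vz$ sampled uniformly in $V_k$, $\vx$ uniformly in $B_{(1-\gamma)\rb}(\vz)$}}.
\end{equation*}
Because $V_i,V_j\subset\Omega$, the events $\{\vx\in V_i\}$ and $\{\vy\in V_j\}$ already imply $\vx,\vy\in\Omega$, so the two indicator constraints on $\Omega$ may be dropped when intersecting with $V_i\times V_j$. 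Using the densities from steps 2 and 3 of the algorithm, this rate becomes
\begin{equation*}
  \frac{\mu}{\abs{V_k}\abs{\ballunbind(\vO)}}\int_{V_k}\int_{V_i\cap \ballunbind(\vz)} \ind_{V_j}\!\paren{\frac{\vz-\gamma\vx}{1-\gamma}}\,d\vx\,d\vz.
\end{equation*}

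Next I would show that this is exactly $\kmijk$. Substituting~\cref{eq:Doikm} into the definition~\cref{eq:kmijkDef} gives
\begin{equation*}
  \kmijk = \frac{\mu}{\abs{V_k}\abs{\ballunbind(\vO)}}\int_{V_k}\int_{V_i}\ind_{\ballunbind(\vz)}(\vx)\left[\int_{V_j}\delta\!\paren{\vy-\frac{\vz-\gamma\vx}{1-\gamma}}d\vy\right]d\vx\,d\vz.
\end{equation*}
The inner $\vy$-integral collapses to $\ind_{V_j}((\vz-\gamma\vx)/(1-\gamma))$, and the two indicators $\ind_{V_i}(\vx)$ and $\ind_{\ballunbind(\vz)}(\vx)$ combine into the domain of integration $V_i\cap \ballunbind(\vz)$, reproducing the rate obtained from the algorithm.

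The only subtle step is justifying the collapse of the delta distribution in the $\vy$-integral; this is immediate since, with $\vz$ and $\vx$ fixed, the map $\vy\mapsto \vy-(\vz-\gamma\vx)/(1-\gamma)$ is an affine bijection with unit Jacobian, so $\int_{V_j}\delta(\vy-\vy_0)\,d\vy=\ind_{V_j}(\vy_0)$. With that identification, the two expressions coincide and the theorem follows. The proof is essentially bookkeeping; the main point is simply to recognize the algorithm as a rejection-sampling realization of the kernel~\cref{eq:Doikm}.
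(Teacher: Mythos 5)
Your proposal is correct and follows essentially the same route as the paper: both compute the algorithm's success rate as $\mu$ times the probability of landing in $V_i\times V_j$, and identify the resulting integral with $\frac{1}{\abs{V_k}}\int_{V_{ijk}}\km(\vx,\vy\vert\vz)\,d\vx\,d\vy\,d\vz$ after collapsing the delta in the uniform-unbinding kernel~\cref{eq:Doikm}. The only cosmetic difference is that the paper first forms the density $\rho(\vx,\vy)$ and then integrates over $V_{ij}$, whereas you perform the $\vy$-integration against the delta first; the handling of the $\ind_{\Omega}$ rejection indicators (absorbed because $V_i,V_j\subset\Omega$) is the same in both.
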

\begin{proof}
  In \cref{alg:CkToAandBRx}, the probability density per time
  a reaction is successful, with the new \textrm{A} molecule placed at
  $\vx$ and the new \textrm{B} molecule at $\vy$ given the \textrm{C}
  molecule is in $V_k$ is
  \begin{align*}
    \rho(\vx,\vy) &= \mu \ind_{\Omega}(\vx) \ind_{\Omega}(\vy) 
                    \int_{V_k} \delta \paren{\vy - \frac{\vz - \gamma \vx}{1 - \gamma}}
                    \paren{\frac{\ind_{B_{(1-\gamma)\rb}(\vz)}(\vx)}{\abs{B_{(1-\gamma)\rb}(\vO)}}}
                    \paren{\frac{\ind_{V_k}(\vz)}{\abs{V_k}}} \, d\vz \\
    &= \frac{1}{\abs{V_k}} \ind_{\Omega}(\vx) \ind_{\Omega}(\vy) \int_{V_k} \km(\vx,\vy\vert\vz) \, d\vz.
  \end{align*}
  The probability per time the reaction successfully occurs producing
  $\vx \in V_i$ and $\vy \in V_j$ is then
  \begin{align*}
    \int_{V_{i j}} \rho(\vx,\vy) \, d\vx \, d\vy 
    &= \frac{1}{\abs{V_k}} \int_{V_{i j k}} \km(\vx,\vy\vert\vz) \, d\vx \, d\vy \, d\vz \\
    &= \kmijk,
  \end{align*}
  where the last line follows by definition~\cref{eq:kpijkDef}.
\end{proof}

Finally, we note that there is a third method one can use to determine
the $\kmijk$'s, and hence $\kmk$ and $\kmkij$, when detailed balance
is satisfied. As we discuss in~\cite{IsaacsonZhang17}, the
volume-reactivity model with choices~\cref{eq:Doikp}
and~\cref{eq:Doikm} satisfies the pointwise detailed balance
condition
\begin{equation} \label{eq:detailedBalanceCondit}
  \kp\paren{\vz\vert\vx,\vy} \bar{p}(\vx,\vy) = \km\paren{\vx,\vy\vert\vz} \bar{p}_{\textrm{b}}(\vz),
\end{equation}
where $\bar{p}(\vx,\vy)$ and $\bar{p}_{\textrm{b}}(\vz)$ denote the
equilibrium solutions to~\cref{eq:pPDE}
and~\cref{eq:pbPDE}. Combining~\cref{eq:detailedBalanceCondit} with
the reflecting domain boundary conditions we derived
in~\cite{IsaacsonZhang17} that
\begin{equation} \label{eq:equlibSoluts}
  \begin{aligned}
    \bar{p}(\vx,\vy) &= \frac{1}{\abs{\Omega}} \frac{K_d}{1 + K_d \abs{\Omega}}, 
    &\bar{p}_{\textrm{b}}(\vz) &= \frac{1}{\abs{\Omega}}\frac{1}{1 + K_d \abs{\Omega}},
  \end{aligned}
\end{equation}
where $K_d$ corresponds to the equilibrium dissociation constant of
the reaction.  Substituting into~\cref{eq:detailedBalanceCondit} and
integrating over $V_{i j k}$ we find
\begin{equation*}
 K_d \abs{V_{i j}} \kpijk = \abs{V_k} \kmijk.
\end{equation*}
Therefore, once $\kpijk$ is evaluated we may calculate $\kmijk$ using
\begin{equation*}
  \kmijk = \frac{K_d \abs{V_{i j}}}{\abs{V_k}} \kpijk.
\end{equation*}

\begin{remark}
  Using the proceeding equation, by direct substitution it follows
  that whenever the spatially continuous volume reactivity model
  satisfies the detailed balance
  condition~\cref{eq:detailedBalanceCondit}, the
  CRDME~\cref{eq:twopartCRDME} has the equilibrium solutions
  \begin{align*}
    \bar{P}_{i j} &= \bar{p} \abs{V_{i j}} = \frac{\abs{V_{i j}}}{\abs{\Omega}} \frac{K_d}{1 + K_d \abs{\Omega}}, 
    &\bar{P}_{\textrm{b}k} &= \bar{p}_{\textrm{b}} \abs{V_k} = \frac{\abs{V_{k}}}{\abs{\Omega}}\frac{1}{1 + K_d \abs{\Omega}},
  \end{align*}
  and satisfies the discrete detailed balance
  condition
  \begin{equation*}
    \kpijk \bar{P}_{i j} = \kmijk \bar{P}_{\textrm{b}k}.
  \end{equation*}  
\end{remark}
\commenta{
\begin{remark}
  If the reaction-rate functions in the volume reactivity model do not
  satisfy detailed balance, the dissociation transition rates,
  $\kmijk$, can still be evaluated by quadrature (i.e. numerically
  evaluating~\eqref{eq:kmijkDef}). Alternatively, one can modify the
  sampling procedure given in~\cref{alg:CkToAandBRx} for the chosen
  $\km(\vx,\vy\vert\vz)$.
\end{remark}
}

\section{Numerical Examples} \label{S:numericExs} We now illustrate
the convergence and accuracy of the unstructured mesh CRDME with
several examples. For all simulations we generate exact realizations
of the jump process $\vec{W}(t)$ associated with the CRDME, defined in
\cref{S:unstructCRDME}, using the next reaction method
SSA~\cite{GibsonBruckJPCHEM2002}.  We begin in
\cref{S:numConvUnstructMesh} by demonstrating that several
reaction time statistics converge to finite values as the mesh size
approaches zero for the two-particle
$\textrm{A} + \textrm{B} \to \varnothing$ annihilation reaction within
a circle. We examine two different association functions
$\kp(\vx,\vy)$, the smooth Gaussian
interaction~\cref{eq:GaussInteract} and the standard discontinuous
Doi interaction~\cref{eq:Doikpxy}.

With convergence established for the forward reaction approximation,
we then confirm in \cref{S:numConvRevRx} that statistics of the
two-particle reversible
$\textrm{A} + \textrm{B} \leftrightarrows \textrm{C}$ reaction
converge to the solution of the Doi model by comparison with Brownian
Dynamics (BD) simulations. Finally, in \cref{S:CRDMEapplications} we
consider several multiparticle systems. We first consider an example
from~\cite{ElfPNASRates2010}, and show that our method is consistent
with results from both Brownian Dynamics simulations and the
renormalized RDME approach of~\cite{ElfPNASRates2010}. To show the
flexibility of our method, we conclude by looking at a simplified
version of a signal propagation model \cite{MunozGarcia:2009hd} in the
complex two-dimensional geometry given by the cytosol of a human B
cell (corresponding to a slice plane from a full three-dimensional
reconstruction).

\subsection{$\textrm{A} + \textrm{B} \rightarrow \varnothing$ Annihilation Reaction}
\label{S:numConvUnstructMesh}
We begin by examining the
$\textrm{A} + \textrm{B} \rightarrow \varnothing$ annihilation
reaction in a system with just one \textrm{A} molecule and one
\textrm{B} molecule.  We consider both the CRDME with both the
discontinuous Doi interaction~\cref{eq:Doikp} and the smooth Gaussian
interaction~\cref{eq:GaussInteract}. \commentab{We also include an
  RDME model for comparison, illustrating the lack of convergence of
  the RDME even for this simple example (in contrast to the two CRDME
  models).}

Molecules are assumed to diffuse within a disk centered at the origin
of radius $R =0.1 \mu$m, i.e. $\Omega = B_{R}(\vO)$. A reflecting
Neumann boundary condition is assumed on the circle
$\partial B_{R}(\vO)$, so that molecules can not leave the domain.
The circle is approximated by a set of $122$ line segments, and mesh
refinement is restricted to the interior of the circle
(\cref{fig:dualMeshEx}). We discretize the circle into a primal
triangular mesh using MATLAB's \texttt{delaunayTriangulation}
function, specifying an edge constraint on the boundary to ensure the
triangulation is strictly in the interior of the domain. Starting from
this initial mesh, we subsequently create a series of refined meshes
by repeatedly dividing each triangle into four congruent
triangles. Repeating this step throughout the entire initial Delaunay
triangular mesh produces a consistent refined mesh that preserves
Delaunay properties~\cite{Carey1997}. A dual polygonal mesh on which
molecules diffuse and react is constructed at the final stage of the
refinement. That is, the CRDME and RDME are defined on this polygonal
dual mesh as described in previous sections. In what follows, we
denote by $h$ the maximum diameter of all polygons within a given dual
mesh.

In the remainder of the paper, unless otherwise stated, spatial units
of all parameters are micrometers and time is seconds. For all
simulations of the annihilation reaction we chose the \textrm{A} and
\textrm{B} molecules' diffusion constants to be $10\mu$m$^2$s$^{-1}$.
For CRDME simulations using the Doi reaction
mechanism~\cref{eq:Doikp}, we choose the reaction radius $\rb$ to be
$10^{-3}\mu\text{m}$ and $\lambda = 10^9\text{s}^{-1}$. In the case of
the CRDME with Gaussian interaction~\cref{eq:GaussInteract}, we choose
$\rb = 0.025\mu\text{m}$, corresponding to a typical interaction
distance for the tethered enzymatic reactions we studied
in~\cite{IsaacsonGoyetteDuskek2016}. For such interactions, the
catalytic rate $\lambda$ is set to be
$2.55459\times10^{7}\mu\text{m}^3\text{s}^{-1}$, which is calibrated
so that the mean reaction time between the two molecules matches that
when using the Doi reaction mechanism. \commentab{Finally, in the RDME
  model we choose the (well-mixed) association rate defined in
  \cref{S:crdmeTorRDME} to be $\beta^+ = \lambda \pi \rb^2$. This
  choice is consistent with the effective well-mixed reaction rate one
  would expect from the volume-reactivity model with Doi
    interaction when $\rb \sqrt{\lambda/D}$ is a small parameter,
  see~\cite{IsaacsonCRDME2013}.}

\begin{figure}[!tbp]
  \centering
  \subfloat[\commentab{RDME}]{
    \label{fig:survProbCircRDME}
    \hspace{-10pt}
    \includegraphics[width=.32\textwidth]{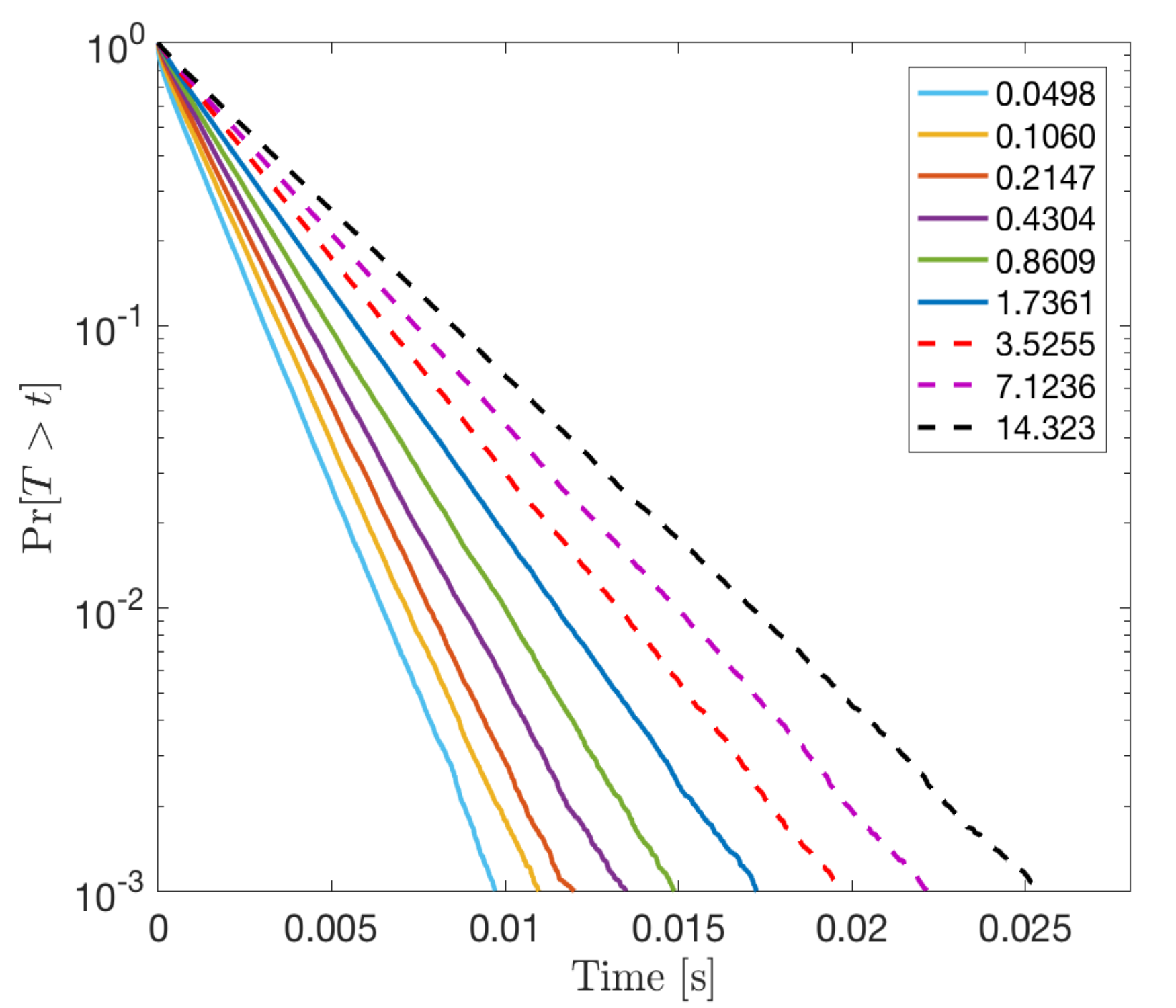}
  }
  \subfloat[CRDME, Doi interaction]{
    \label{fig:survProbCirc}
    \includegraphics[width=.32\textwidth]{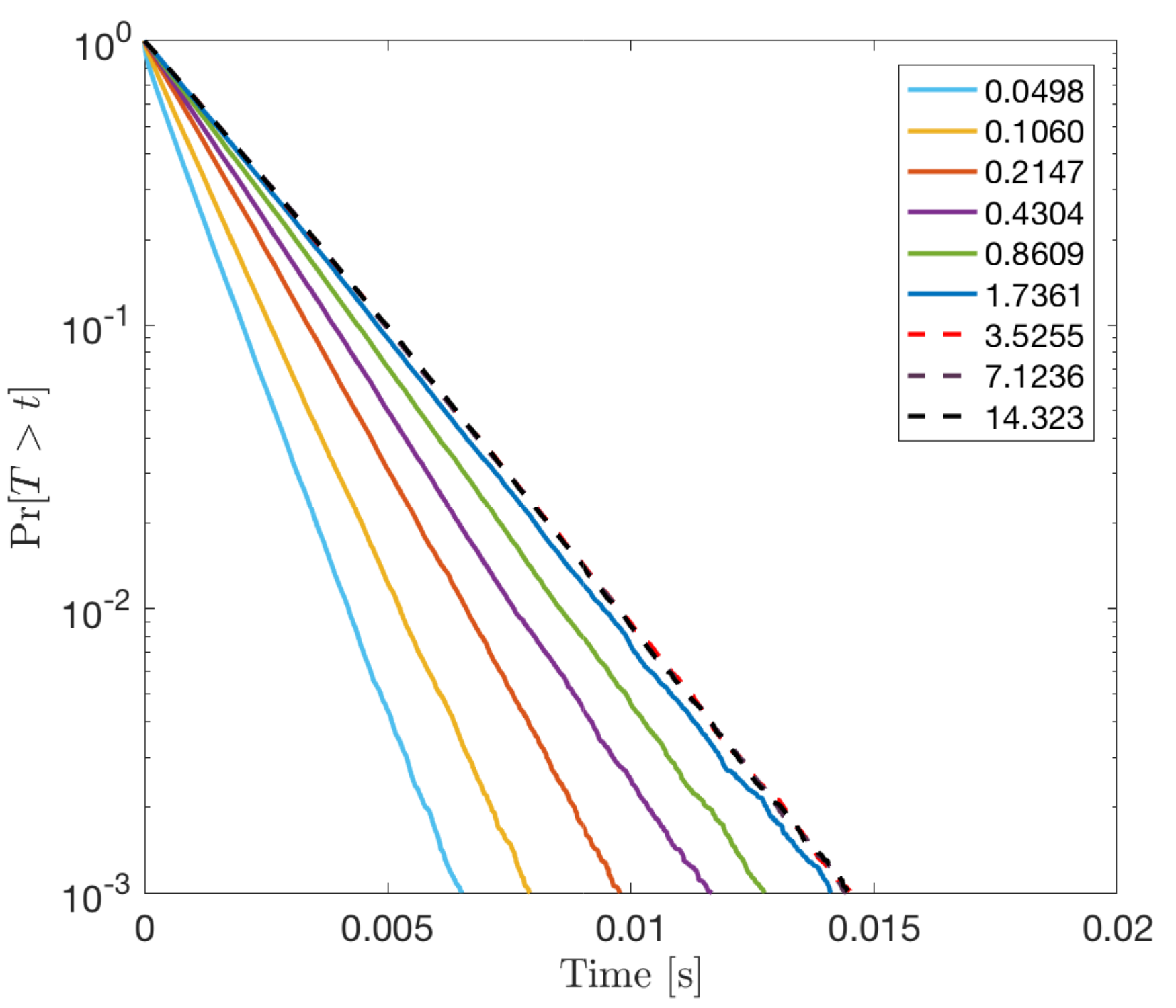}
  }
  \subfloat[CRDME, Gaussian interaction]{
    \label{fig:GaussSurvProb}
    \includegraphics[width=.32\textwidth]{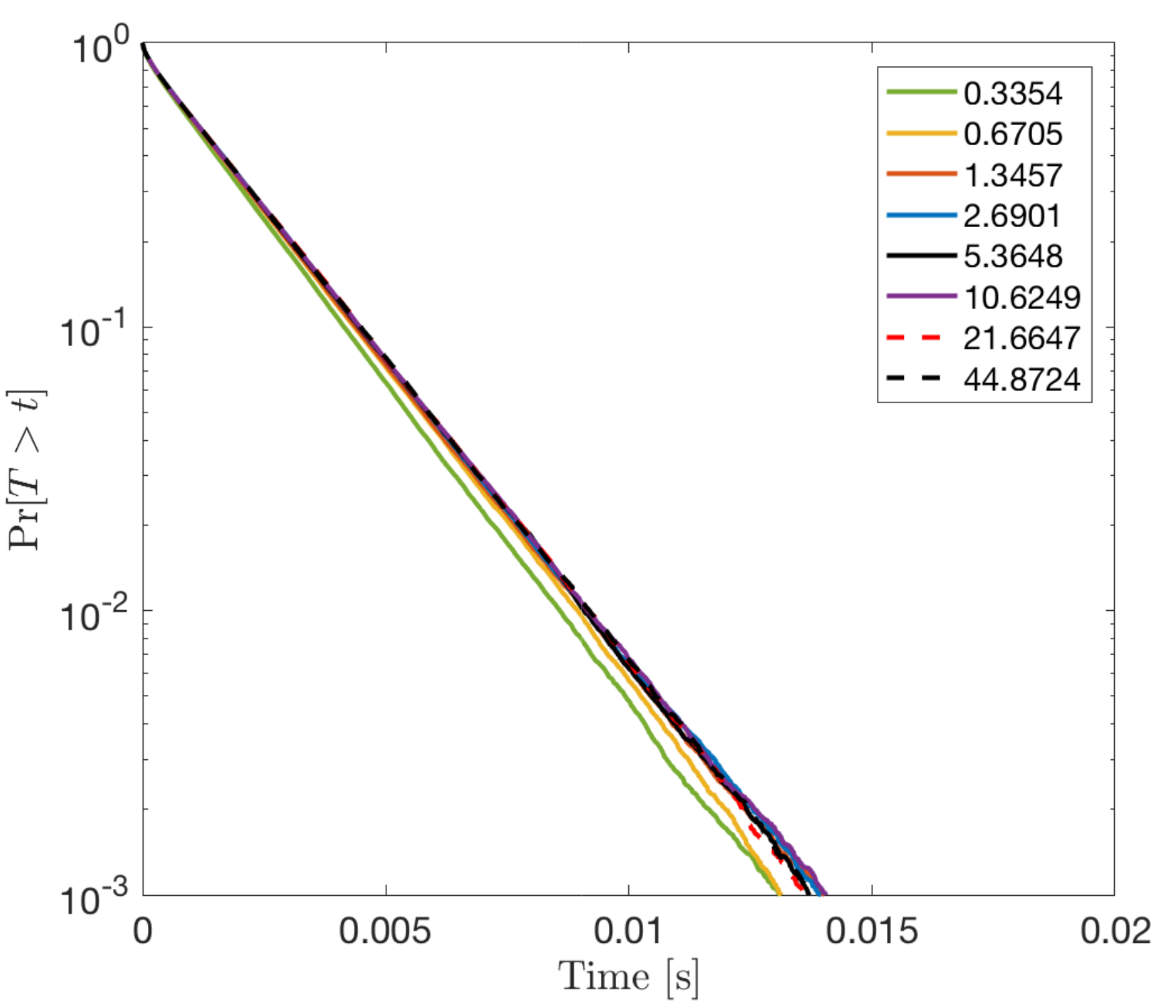}    
}
\caption{Survival time distributions vs. $t$ from the two-particle
  $\textrm{A} + \textrm{B} \to \varnothing$ reaction \commentab{for
    (a) (non-convergent) RDME model; (b) CRDME Doi reaction
    model~\cref{eq:Doikpxy}; (c) CRDME Gaussian interaction
    model~\cref{eq:GaussInteract}.} In each case the domain is a
  disk. \commentab{For the RDME each curve was estimated from 128000
    simulations, for the CRDME with Doi interaction from 128000
    simulations, and for the CRDME with Gaussian interaction from
    100000 simulations.} The legends give the ratio, $\rb$/$h$ as the
  mesh is refined ($h$ is approximately successively halved). See
  \cref{S:numConvUnstructMesh} for other parameter values and
  details. \commentab{We see that the survival time distribution for
    the RDME never converges as $h$ is reduced, while for both
    reaction models the CRDME survival time distributions converge.}}
\end{figure} 
Let $\Tb$ denote the random time for the two molecules to react when
each starts uniformly distributed in $\Omega$. The corresponding
survival time distribution is given by
\begin{equation*}
  \prob\brac{\Tb > t} = \int_{\Omega}p(\vx, \vy, t) \, d\vx d\vy ,
\end{equation*}
where $\vx$ and $\vy$ are the locations of the $A$ and $B$ molecules
respectively, and $p(\vx,\vy,t)$ satisfies~\cref{eq:pPDE} with
$\km(\vx,\vy\vert\vz) = 0$. We estimate the survival time distribution
from the numerically sampled reaction times using the \texttt{ecdf}
command in MATLAB.  \commentab{ \Cref{fig:survProbCircRDME}
  demonstrates the divergence of the estimated survival time
  distribution for the RDME as the mesh width is reduced. Note, there
  is no range of mesh widths over which the survival distribution
  from the RDME is robust to changes in $h$. In contrast,
  \Cref{fig:survProbCirc} demonstrates the convergence (to within
  sampling error) of the estimated survival time distribution of the
  unstructured mesh CRDME using a Doi interaction. Similarly,
  \cref{fig:GaussSurvProb} demonstrates the convergence (to within
  sampling error) of the estimated survival time distribution of the
  unstructured mesh CRDME using a Gaussian interaction.} For both
CRDME models, the survival time distributions are seen to converge as
the maximum mesh width $h \to 0$.

\begin{figure}[!tbp]
  \centering
  \subfloat[Convergence of $\avg{\Tb}$]{
    \label{fig:convergMRT}
    \includegraphics[width=.48\textwidth]{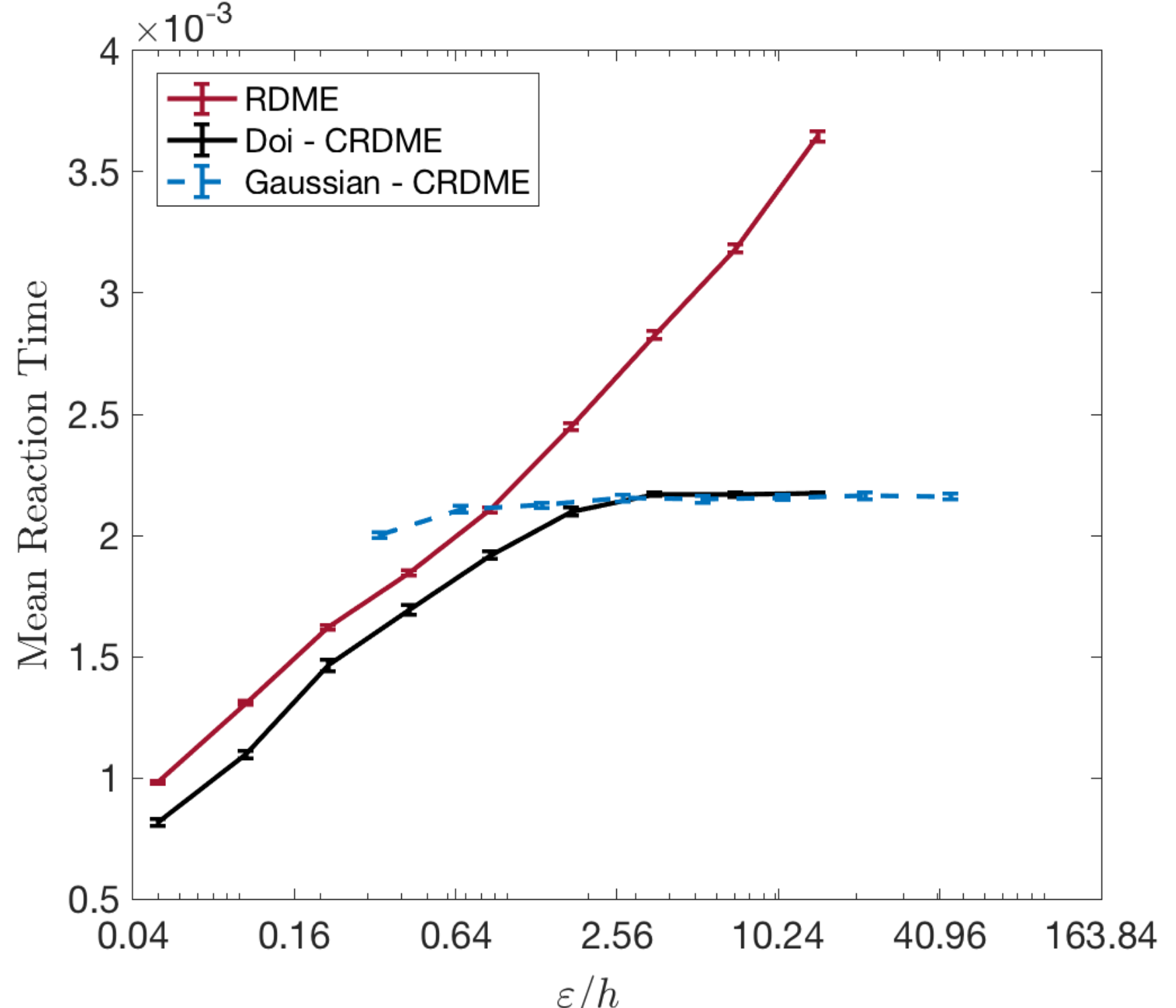}
  }
  \subfloat[Rate of convergence of $\avg{\Tb}$]{
    \label{fig:sDiffCirc}
    \includegraphics[width=.48\textwidth]{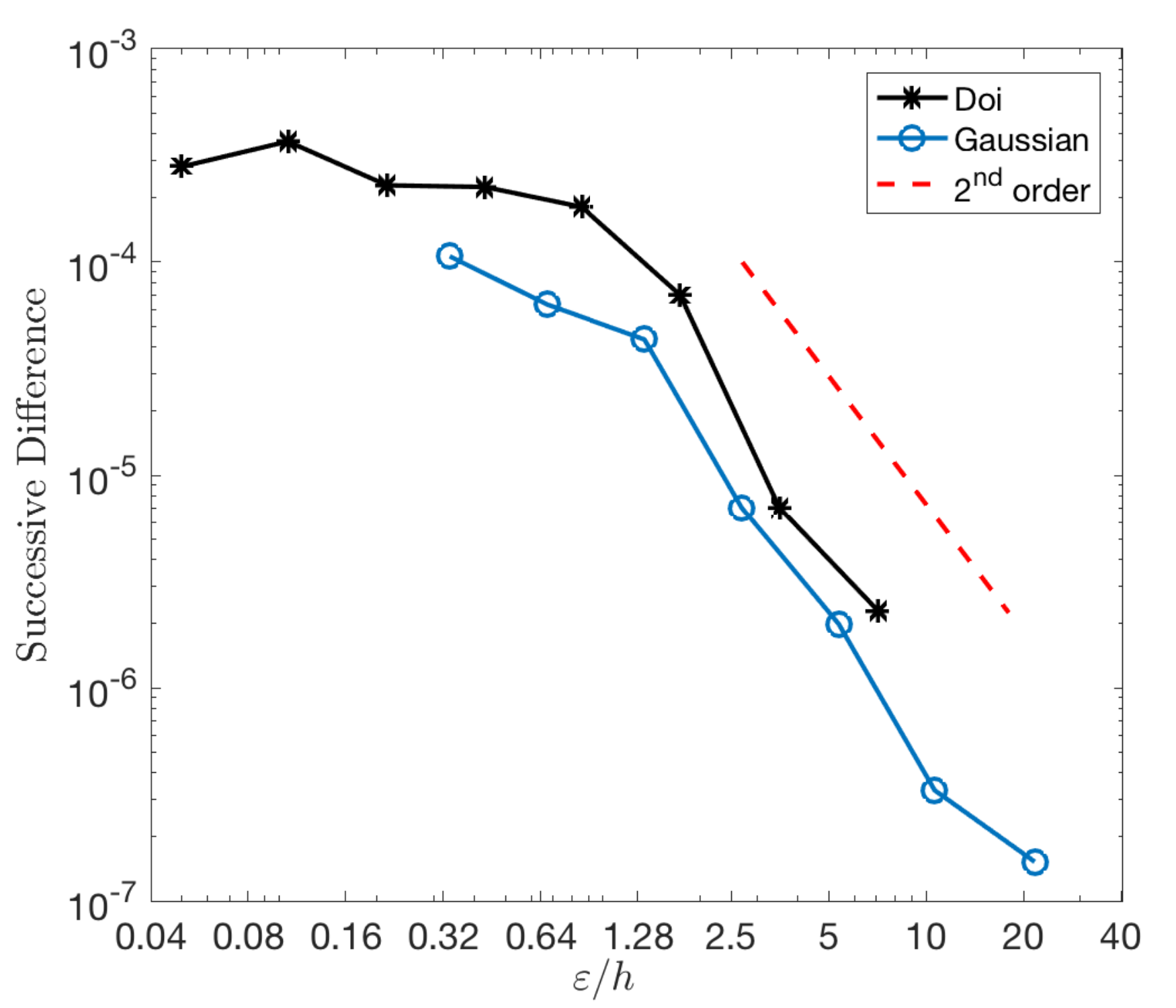}    
  }
  \caption{\commentab{Mean reaction time $\avg{\Tb}$ for the
      two-particle $\textrm{A} + \textrm{B} \to \varnothing$ reaction
      as the mesh width, $h$, is reduced. In panel (a) we plot the
      mean reaction time $\avg{\Tb}$ vs. $\rb$/$h$ as $h$ is
      (approximately) successively halved. Each mean reaction time for
      the RDME was estimated from 128000 simulations; for the CRDME
      with Doi interaction from 128000 simulations; and for the CRDME
      with Gaussian interaction from 100000 simulations.} Note, 95$\%$
    confidence intervals are drawn on each data point, but for some
    points are smaller than the marker labeling the point. See
    \cref{S:numConvUnstructMesh} for parameter values and further
    details. In panel (b) we demonstrate the rate of convergence when
    using the CRDME with Doi or Gaussian reaction models by plotting
    the difference between successive points on the corresponding
    $\avg{\Tb}$ vs $\rb/h$ curves from \cref{fig:convergMRT}. The
    smaller of the two $h$ values is used for labeling. \commentab{The
      effective convergence rate of the successive differences to zero
      for the CRDME with either reaction model scales roughly like
      $O(h^2)$.}}
    \label{fig:convergCRDME}
\end{figure}
To study the rate of convergence we examined the mean reaction time
$\avg{\Tb}$, defined by
\begin{equation*}
  \avg{\Tb} = \int_{0}^{\infty} \prob \brac{\Tb > t} \, dt.
\end{equation*}
We estimated the mean reaction time from the numerically sampled
reaction times by calculating the sample mean. In
\cref{fig:convergMRT} we show the sample mean reaction times for the
three choices of reaction mechanisms as $\rb$/$h$ is
varied. \commentab{We see that as $\rb$/$h$ $\to \infty$ (i.e.
  $h \to 0$) the sample mean reaction times for both CRDME reaction
  models converge to a finite value, while the sample mean for the
  RDME reaction model diverges.}  \Cref{fig:sDiffCirc} illustrates the
rate of convergence for the CRDME Doi and Gaussian interaction models
by plotting the successive difference of the estimated mean reaction
times as $h$ is decreased (approximately halved). For $h$ sufficiently
small, the empirical rate of convergence for both reaction mechanisms
is roughly second order.

\subsection{$A + B \rightleftharpoons C$  Reversible Binding Reaction}
\label{S:numConvRevRx}
We now consider the reversible bimolecular
$A + B \rightleftharpoons C$ reaction in a system that initially
contains just one $C$ molecule. The corresponding volume reactivity
model is then given by~\cref{eq:pPDE} and~\cref{eq:pbPDE}. It is
assumed all three molecules have the same diffusion constant,
$\DA = \DB = \DC = 0.01 \mu\text{m}^2 \text{s}^{-1}$. The domain
$\Omega$ is chosen to be a square with sides of length
$L = 0.2 \mu\text{m}$, and we assume a reflecting Neumann boundary
condition on $\partial \Omega$ in each of the $\vx$, $\vy$, and $\vz$
coordinates. We use the Doi reaction model~\cref{eq:Doikp} for the
forward $\textrm{A} + \textrm{B} \to \textrm{C}$ reaction, with
reaction radius $\rb = 10^{-3}\mu\text{m}$, and consider two
dissociation mechanisms: the point unbinding model~\cref{eq:ptUnbind}
introduced in~\cite{ErbanChapman2011}, and the uniform unbinding
model~\cref{eq:Doikm}. For the association reaction, the product
\textrm{C} molecule is placed at the diffusion weighted center of
mass~\cref{eq:diffWeightCentMass}, so that $\gamma = \frac{1}{2}$.
For all simulations the \textrm{C} molecule was initially placed
randomly within $\Omega$, corresponding to the initial conditions that
\begin{align*}
p(\vx,\vy,0) = 0, && \pb(\vz,0) = \frac{1}{\abs{\Omega}}.
\end{align*}

To confirm that the unstructured mesh CRDME converges to the solution
of the Doi volume-reactivity model, we compare statistics from SSA
simulations of the jump processes corresponding to the CRDME against
statistics calculated from Brownian Dynamics (BD) simulations using
the method of~\cite{ErbanChapman2009,ErbanChapman2011} (with a fixed
time step of \commenta{$dt = 10^{-10}\textrm{s}$}). Unless otherwise stated, for
all simulations the association rate constant in~\cref{eq:Doikp} was
chosen to be $\lambda = 9.3662 \times 10^7s^{-1}$, and the
dissociation rate constant in~\cref{eq:ptUnbind} and~\cref{eq:Doikm}
was chosen to be $\mu = 9.2735 \times 10^{5}s^{-1}$. Here $\lambda$
was determined by matching the mean association time $\Tb$
($\Tb = 1.9328$s) for the irreversible
$\textrm{A} + \textrm{B} \to \varnothing$ reaction to occur in
$\Omega$, given a uniform initial distribution for the \textrm{A} and
\textrm{B} molecules, to the corresponding time found in Figure~2
of~\cite{ElfPNASRates2010}. $\mu$ was then determined by matching the
equilibrium constant ($K = 3.1730\times 10^{-4} \mu \text{m}^2$)
with that in~\cite{ElfPNASRates2010}.
\begin{figure}[!tbp]
  \centering
  \subfloat[Point unbinding]{
    \label{fig:PointUnbind}
    \includegraphics[width=.48\textwidth]{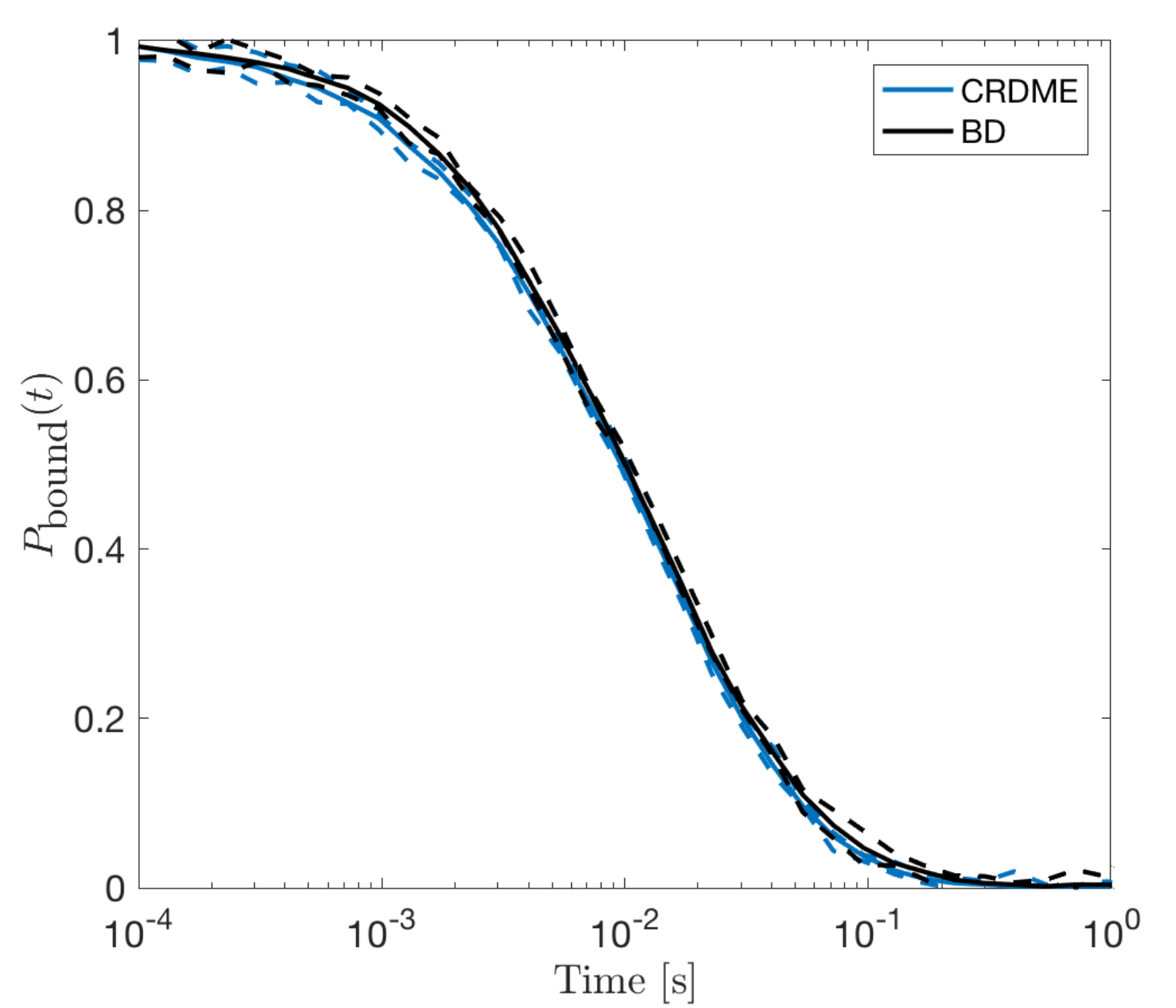}
  }
  \subfloat[Uniform unbinding]{
    \label{fig:UniformUnbind}
    \includegraphics[width=.48\textwidth]{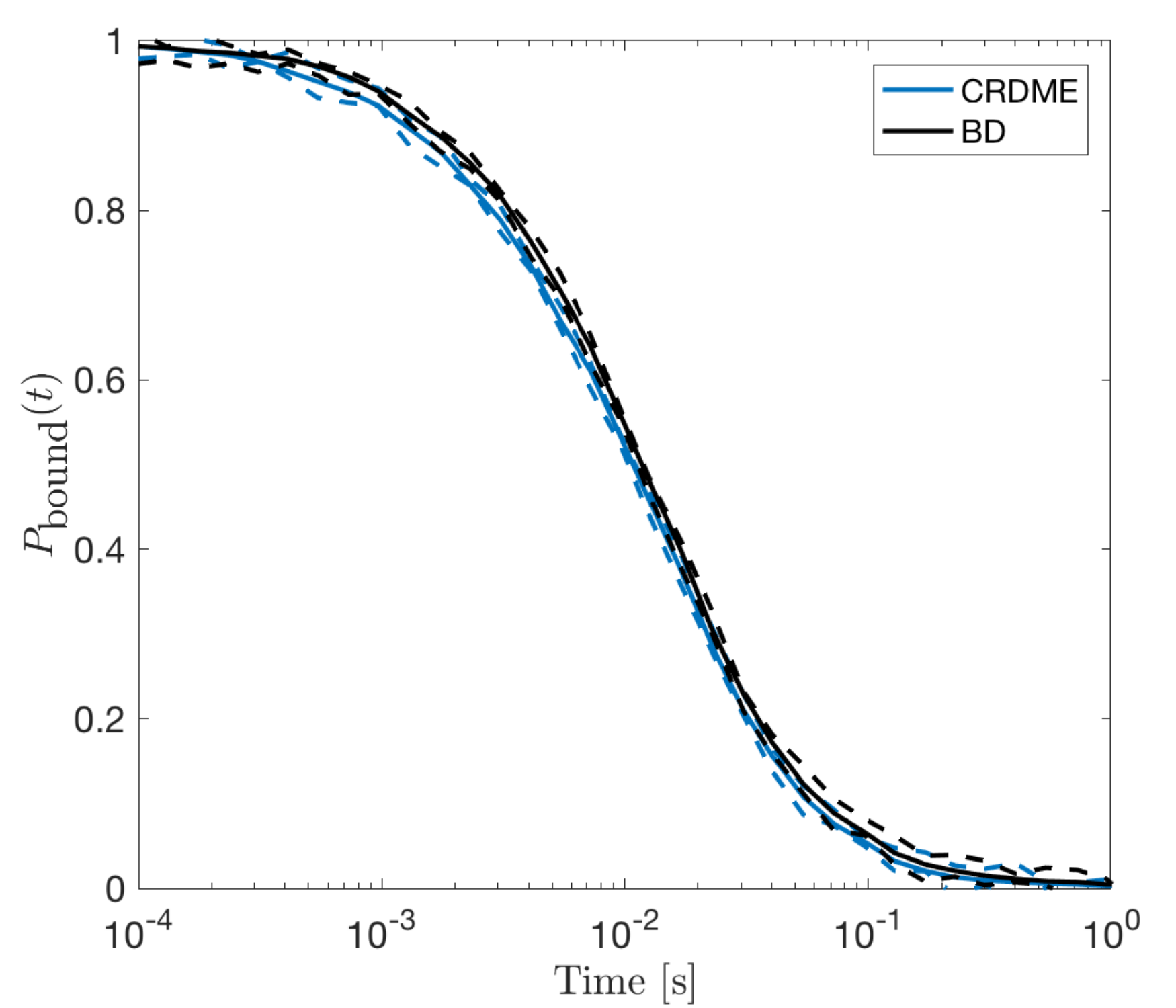}    
  }
  \caption{Probability molecules are bound $P_{\textrm{bound}}(t)$
    vs. time for CRDME SSA simulations and BD simulations. Blue curves
    correspond to the CRDME simulations and black to the BD
    simulations. Corresponding 95$\%$ confidence intervals are drawn
    with dashed lines in the same color. Each curve was estimated from
    128000 simulations. The left panel corresponds to the point
    unbinding model~\cref{eq:ptUnbind}, while the right corresponds to 
    the uniform unbinding model~\cref{eq:Doikm}. The domain $\Omega$
    was a square with sides of length $L = 0.2\mu\text{m}$, and was
    discretized into $N = 263169$ polygonal voxels with maximum mesh
    width approximately $h = 3.5334\times 10^{-5} \mu m$. The
    polygonal mesh was constructed as the dual mesh to a uniform
    triangulation of the square, that was itself obtained from a
    Cartesian mesh by dividing each square into two triangles. For
    remaining parameters see \cref{S:numConvRevRx}.}
    \label{fig:compareBD}
\end{figure}
\begin{figure}[!tbp]
  \centering
  \subfloat[Convergence of $P_{\textrm{bound}}(t)$]{
    \label{fig:convergPb}
    \includegraphics[width=.48\textwidth]{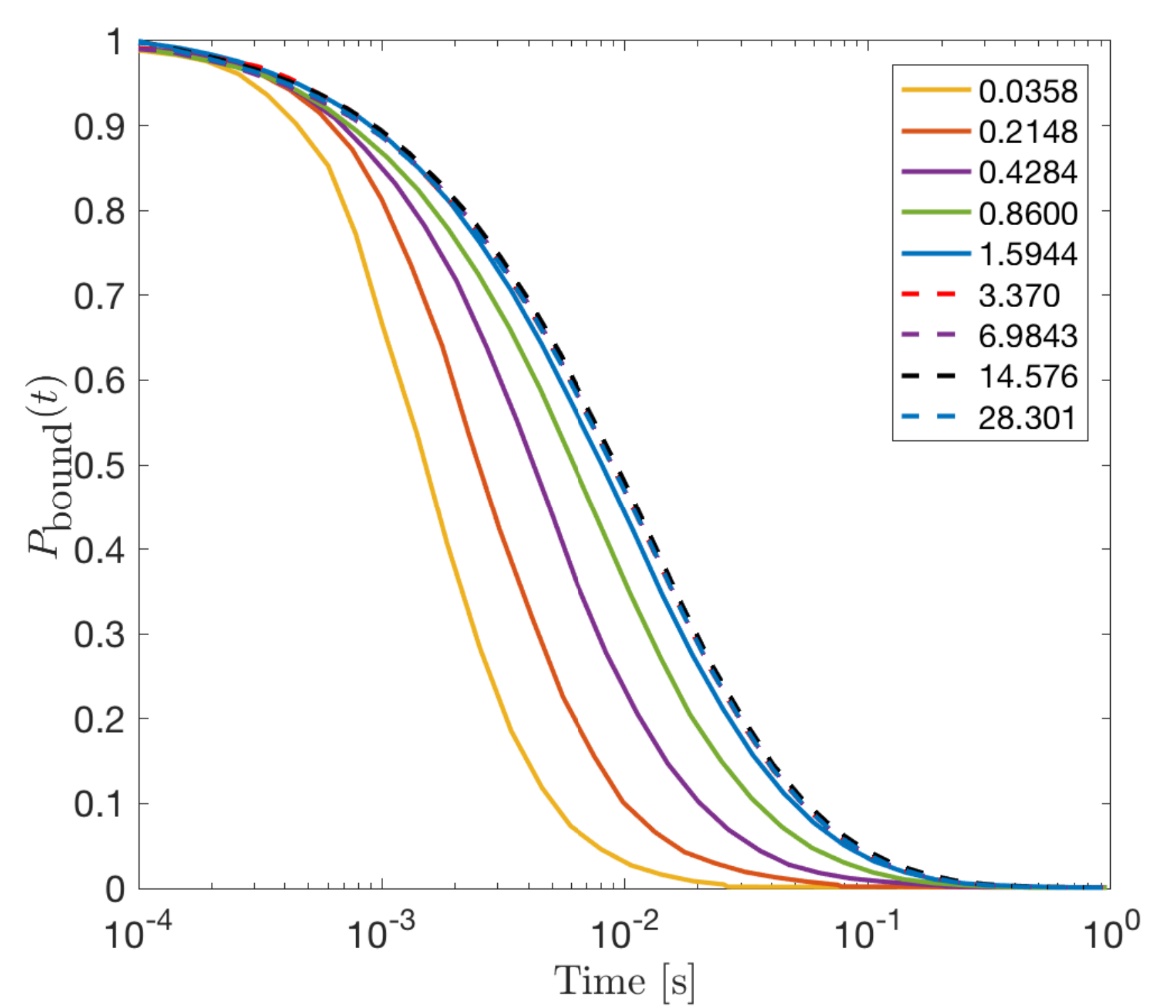}
  }
  \subfloat[Rate of convergence of $P_{\textrm{bound}}(t)$]{
    \label{fig:convergTeq}
    \includegraphics[width=.48\textwidth]{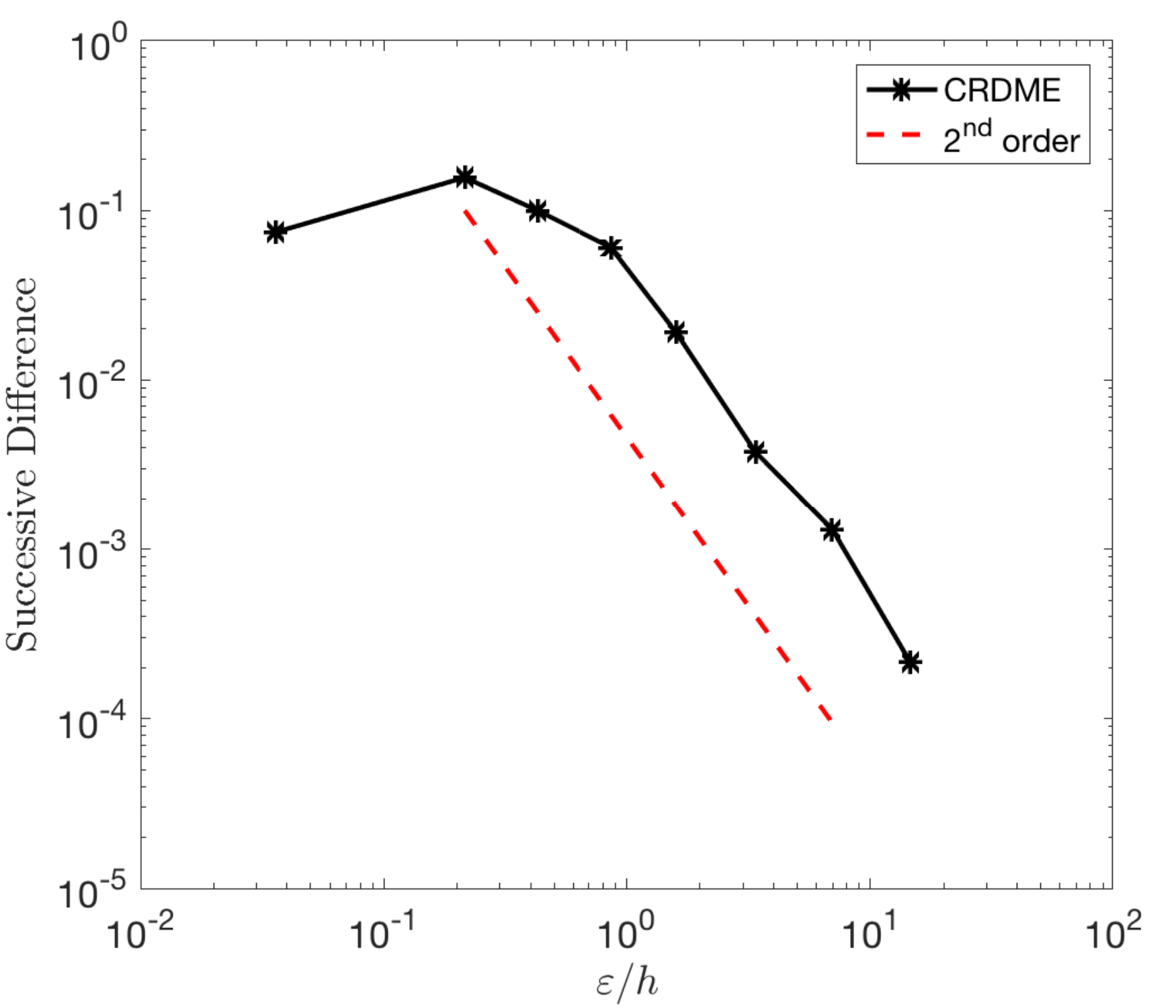}    
  }
  \caption{Convergence of the probability the molecules are in the
    bound state, $P_\textrm{bound}(t)$, as $h \to 0$. In panel (a) we
    plot $P_{\textrm{bound}}(t)$ vs time as $\rb / h$ is varied for
    $\rb = 10^{-3}\mu\text{m}$. Each curve was estimated from 100000
    simulations. We see convergence as the mesh width $h$ goes to 0
    (i.e.  $\rb /h \to \infty$). Legend gives the ratio, $\rb/h$, for
    each curve. For remaining parameters see \cref{S:numConvRevRx}. In
    panel (b) we demonstrate the rate of convergence at $t = 0.01$s by
    plotting the difference between successive points of
    $P_\textrm{bound}(t)$ vs. $\rb/h$. The smaller of the two $h$
    values is used for labeling. The effective convergence rate to
    zero scales like $O(h^2)$.}
    \label{fig:convergRevPb}
\end{figure}
Let $P_\textrm{bound}(t)$ denote the probability the \textrm{A} and
\textrm{B} molecules are bound together in the \textrm{C} state at
time $t$. That is
\begin{equation*}
  P_\textrm{bound}(t) = \int_{\Omega} \pb(\vz,t) \, d\vz.  
\end{equation*}
We estimate $P_\textrm{bound}(t)$ numerically by averaging the number
of C molecules at a fixed time in the system over the total number of
CRDME (resp. BD) simulations. \Cref{fig:compareBD} demonstrates that
for each of the unbinding models, $P_\textrm{bound}(t)$ from the
unstructured mesh CRDME with Doi interaction agrees to statistical
error with $P_\textrm{bound}(t)$ from BD
simulations. \Cref{fig:convergPb} shows the convergence of
$P_\textrm{bound}(t)$ from the CRDME as the mesh width $h \to 0$. To
illustrate the rate of convergence of $P_\textrm{bound}(t)$, in
\cref{fig:convergTeq} we plot the successive difference of the
estimated $P_\textrm{bound}(t)$ at $t = 0.01s$ as $h$ is approximately
halved. In the limit that $\rb/h \to \infty$, the empirical rate of
convergence is roughly second order.

\subsection{CRDME Applications}
\label{S:CRDMEapplications}
In the previous subsections we demonstrated the convergence of the
CRDME for two basic bimolecular chemical reactions involving at most
two molecules ($A + B \to \emptyset$ and
$A + B \rightleftharpoons C$). We now demonstrate that the CRDME is
capable of accurately resolving more general multiparticle reaction
systems, considering the example given by equation~3
of~\cite{ElfPNASRates2010}. The domain $\Omega$ is chosen to be a
square with sides of length $L = 1 \mu\text{m}$, allowing us to
directly compare with the results of~\cite{ElfPNASRates2010}.

\begin{figure}[tbp]
  \centering
  \includegraphics[scale=0.42]{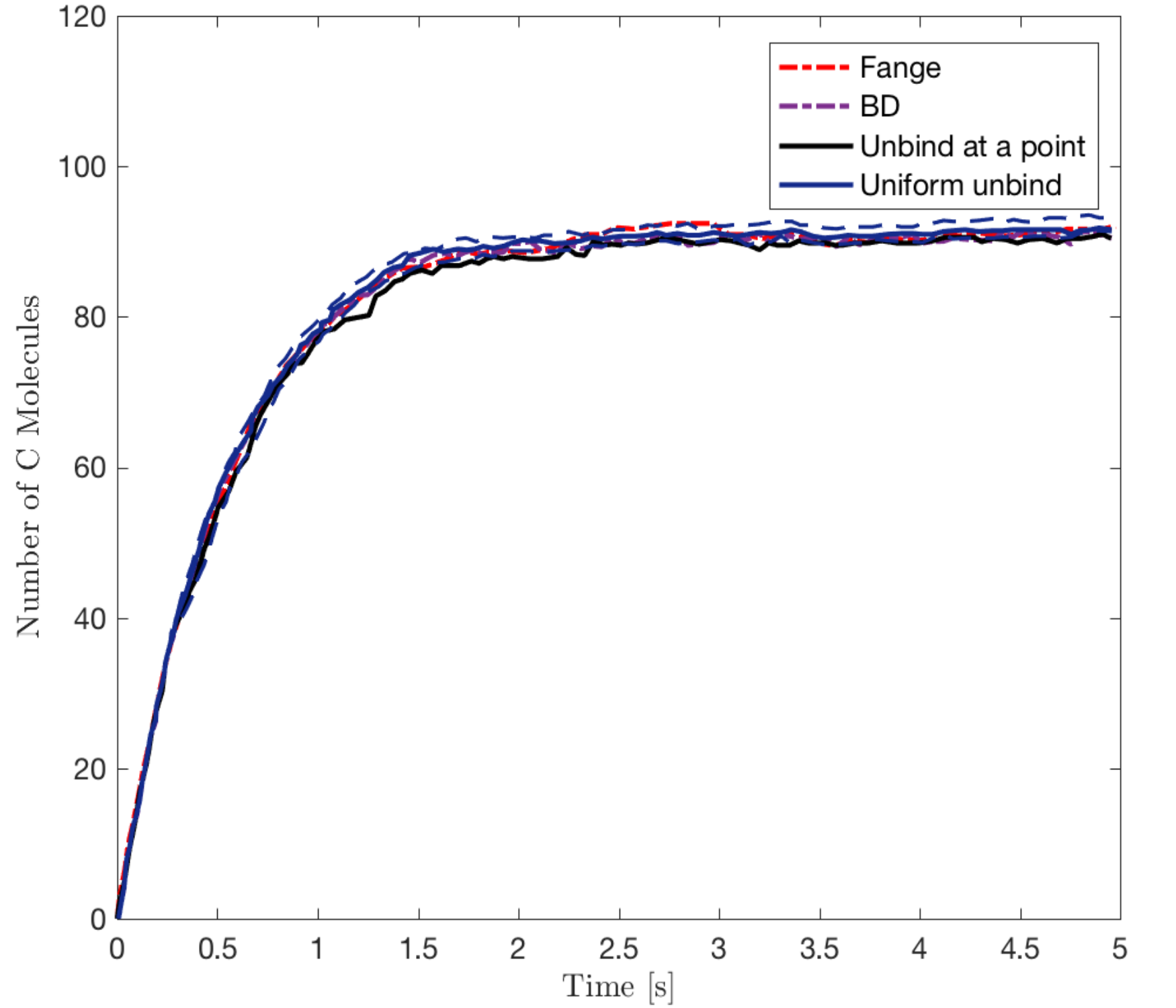}
  \caption{Mean number of \textrm{C} molecules vs. time when $\rb$ =
    $10^{-3}\mu$m for the reaction system~\cref{eq:chemSysm} in a
    square with sides of length $1 \mu\text{m}$. The diffusion
    constant of all species is $D = 0.01 \mu$m$^{2}$s$^{-1}$. The
    production rate of C molecules is $\kprod$ = 180$s^{-1}$ and the
    first-order rate constant for degradation of A and B molecules is
    $\kdecay$ = 10$s^{-1}$. The blue line corresponds to CRDME
    simulations of the point unbinding model~\cref{eq:ptUnbind}, while
    the solid black line corresponds to CRDME simulations of the
    uniform unbinding model~\cref{eq:Doikm}. The dash-dot red line
    gives the finest mesh resolution result obtained
    in~\cite{ElfPNASRates2010} using their modified RDME model, while
    the purple dash-dot line indicates the result from BD simulations
    of the uniform unbinding model~\cref{eq:Doikm}. The dashed black 
    and blue lines correspond to a 95$\%$ confidence interval for the
    mean in the point and uniform unbinding CRDME simulations
    respectively. The CRDME and BD curves were estimated from 100
    simulations, while the red line was generated by estimating the
    data points in Fig~4B of~\cite{ElfPNASRates2010}.  The mesh used
    for all CRDME simulations was the same as described in
    \cref{fig:compareBD}. All BD simulations used a time-step of
    $dt = 10^{-10}$s.}
  \label{fig:chemSys}
\end{figure}
\begin{figure}[!tbp]
  \centering
  \subfloat[Point unbinding]{
    \label{fig:histPointUnbind}
    \includegraphics[width=.48\textwidth]{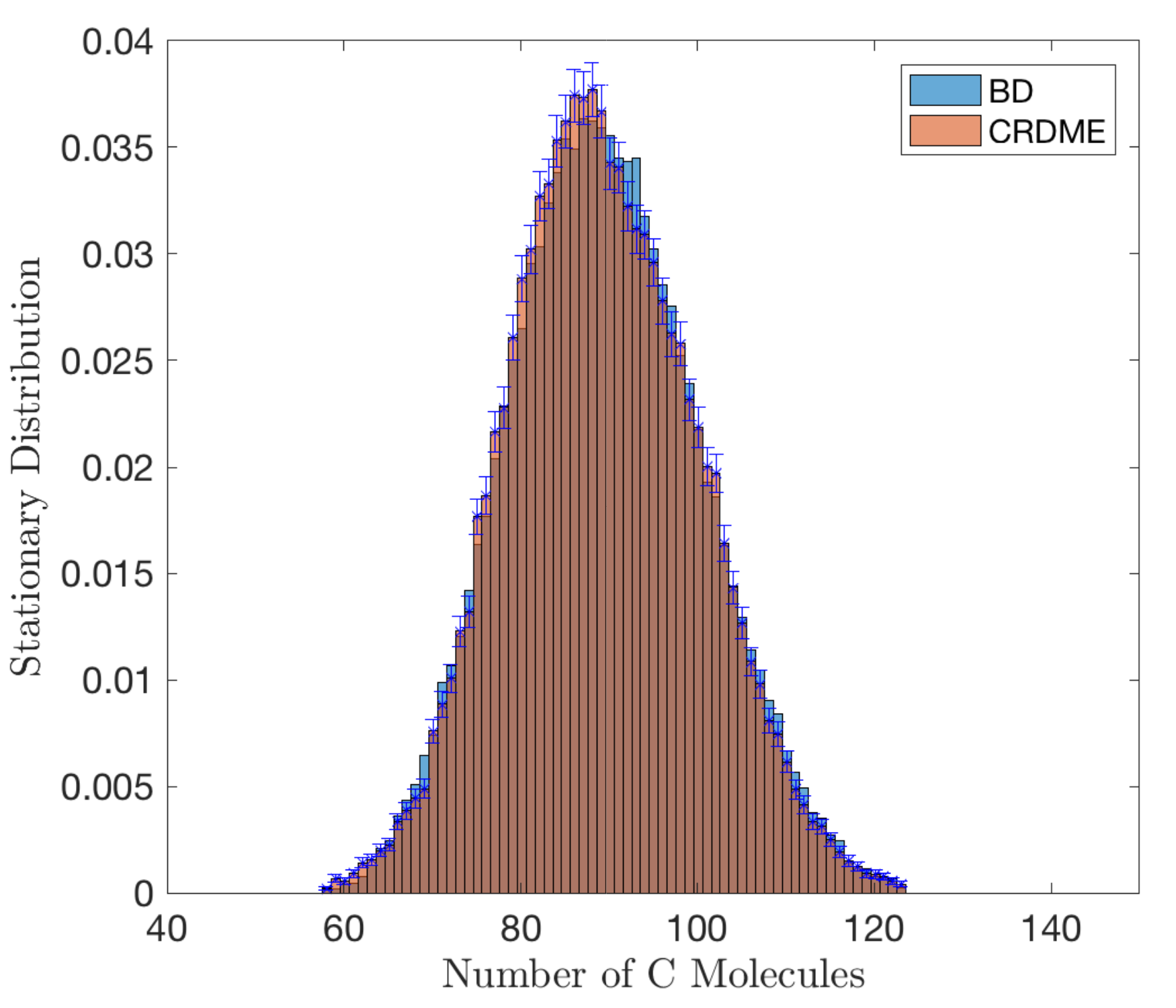}
  }
  \subfloat[Uniform unbinding]{
    \label{fig:histUniformUnbind}
    \includegraphics[width=.48\textwidth]{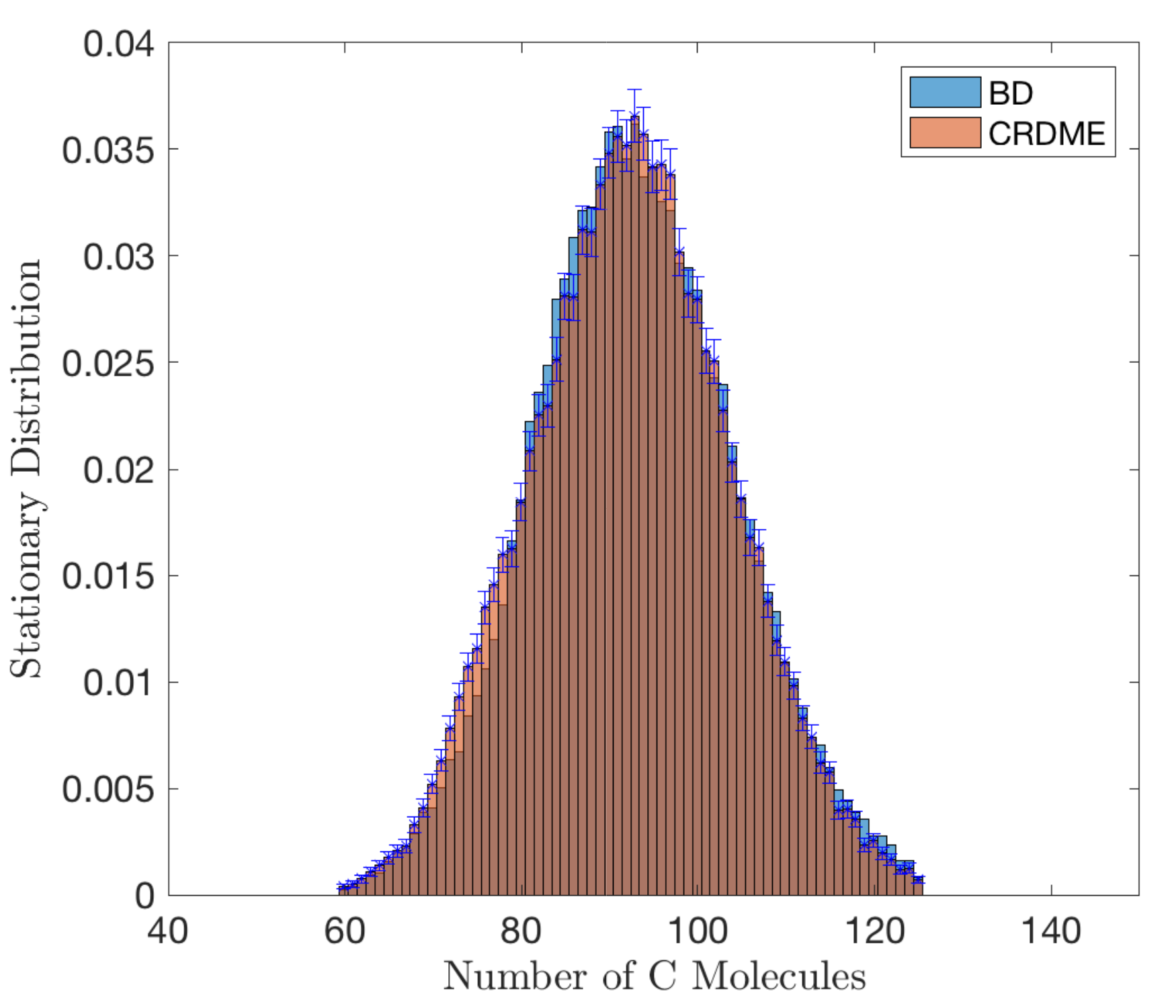}    
  }
  \caption{Histogram of the empirical stationary distribution for the
    number of \textrm{C} molecules obtained from 60000 CRDME and BD
    simulations of~\cref{eq:chemSysm}. Parameters are the same as in
    \cref{fig:chemSys}. Panel (a) corresponds to using the point
    unbinding model~\cref{eq:ptUnbind} in both the CRDME and BD
    simulations. Panel (b) corresponds to using the uniform unbinding
    model~\cref{eq:Doikm} in both the CRDME and BD simulations. 95$\%$
    confidence intervals for CRDME are drawn in blue. For each
    unbinding model, the CRDME simulations agree with the Brownian
    dynamics simulations to statistical error.}
    \label{fig:histCRDME}
\end{figure}
The reaction system described by equation~3 of~\cite{ElfPNASRates2010}
is
\begin{equation}
  A + B \underset{\mu}{\stackrel{\lambda}{\rightleftharpoons}} C,\:\:\:\:\:\:\: 
  \emptyset \xrightarrow{k_1} C,\:\:\:\:\:\:\: A \xrightarrow{k_2} 
  \emptyset,\:\:\:\:\:\:\: B \xrightarrow{k_2} \emptyset.
  \label{eq:chemSysm}
\end{equation}
Here the reaction radius, $\rb$, is again chosen to be $10^{-3}\mu$m,
the reaction rate was chosen to be
$\lambda = 1.0056 \times 10^8s^{-1}$ and the dissociation rate was
chosen to be $\mu = 3.1621 \times 10^4s^{-1}$. Parameters are
calibrated as described in the preceding subsection using the
parameter relations established in~\cite{ElfPNASRates2010}.  For our
CRDME simulations, the domain $\Omega$ was discretized into 263169
mesh voxels with maximum mesh width $h = 3.5334\times 10^{-5} \mu m$,
as detailed in \cref{fig:compareBD}. In \cref{fig:chemSys}, we plot
the time evolution of the average number of C molecules as found
in~\cite{ElfPNASRates2010}, as determined from BD simulations using
the uniform unbinding mechanism~\cref{eq:Doikm}, and from CRDME
simulations using both the point~\cref{eq:ptUnbind} and
uniform~\cref{eq:Doikm} unbinding mechanisms. The estimated average
number of \textrm{C} molecules agreed quite well between all four
methods (when averaged over 100 simulations). \Cref{fig:histCRDME}
shows the corresponding stationary distribution of the number of
\textrm{C} molecules in the CRDME and BD simulations for each
unbinding model. In both cases the stationary distributions agree to
the level of statistical error.

We conclude by demonstrating the ability of our method to handle
complex geometries by considering a simplified version of a signaling
cascade model~\cite{MunozGarcia:2009hd} within a two-dimensional
domain corresponding to the cytosol of a human B cell (reconstructed
from an X-ray tomogram~\cite{IsaacsonBmB2013}). The simplified
(three-level) signaling cascade model is given by
\begin{equation}
  \begin{aligned}
    \text{first level: } &c_1^u \xrightarrow{k_1^a} c_1^p,  &\text{(cell membrane)} \\
      &c_1^p \xrightarrow{k_1^i} c_1^u, \\
    \text{second level: } &c_2^u + c_1^p \xrightarrow{k_2^a} c_2^p + c_1^p, \\
                          &c_2^p \xrightarrow{k_2^i} c_2^u, \\
    \text{third level: } &c_3^u + c_2^p \xrightarrow{k_3^a} c_3^p + c_2^p, \\
                          &c_3^p \xrightarrow{k_3^i} c_3^u,
  \end{aligned}
  \label{eq:sigCasd}
\end{equation}
where the first level phosphorylation reaction occurs only in the cell
membrane. The diffusion of molecules and all other reactions occur
within the cytosol.

\begin{figure}[tbp]
  \centering
  \subfloat[Reconstructed boundaries]{
    \label{fig:bcell_boundary}
    \includegraphics[width=.48\textwidth]{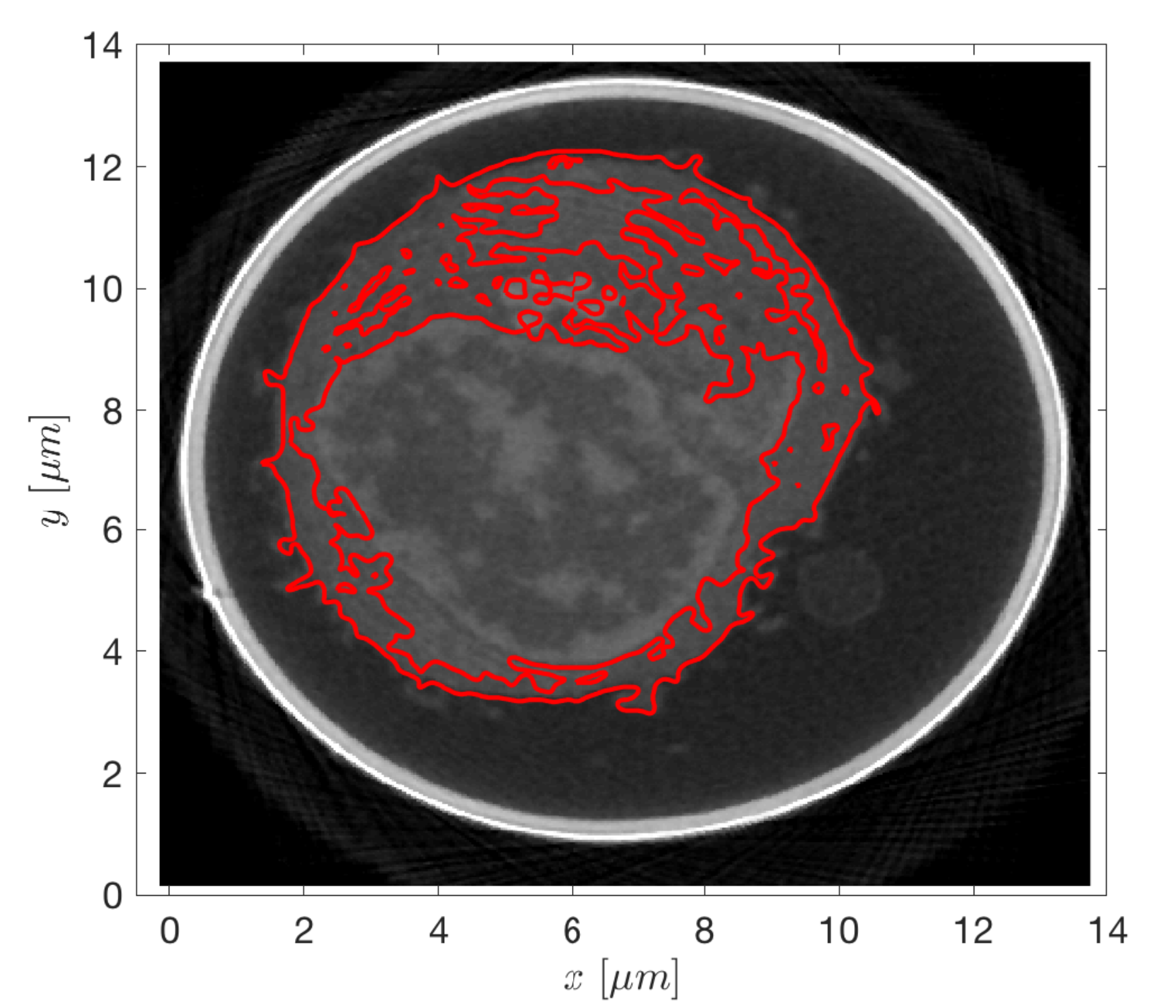}
  }
  \subfloat[The mesh of the human B cell]{
    \label{fig:bcellmesh}
    \includegraphics[width=.48\textwidth]{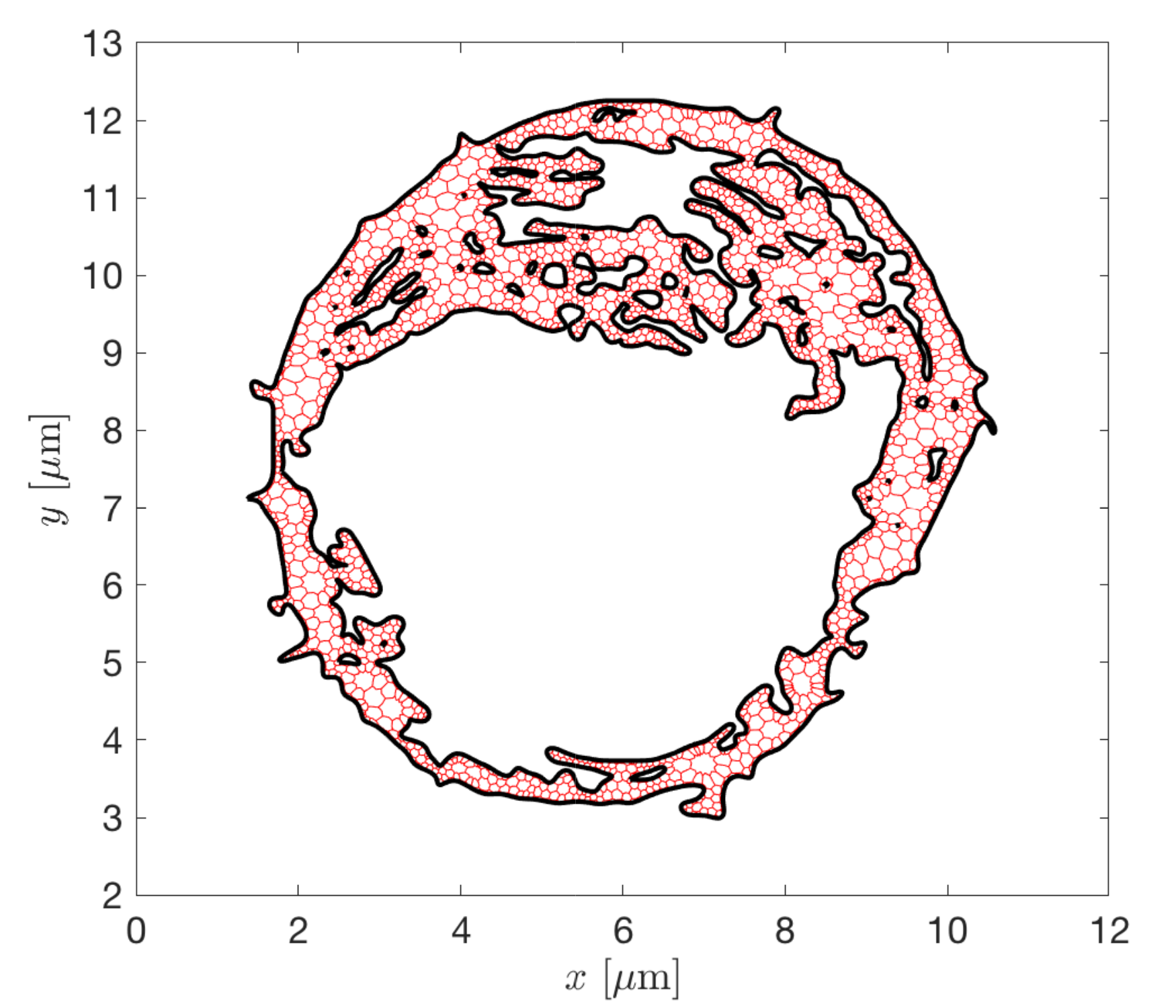}     
  }
  \caption{A two-dimensional slice of an X-ray tomogram of a human B
    cell. In panel (a) we plot the reconstructed boundaries (red solid
    lines) on top of the original imaging data. In panel (b) we show
    the mesh of the human B cell used in the CRDME simulation of the
    signaling cascade model ~\cref{eq:sigCasd}. The maximum connected
    region is discretized into $6915$ polygonal elements.}
    \label{fig:humanBcell}
\end{figure}
We extracted a two-dimensional slice of a human B cell from a
(labeled) three-dimensional X-ray tomogram provided by the National
Center for X-ray Tomography (NCXT)~\cite{NCXT}. The boundaries of the
cell, the nucleus, and cytosolic organelles were then segmented using
the \texttt{bwboundaries} command in MATLAB. The maximum connected
region from the resulting segmented cytosol was determined
(\cref{fig:bcell_boundary}), and that domain was then triangulated
in Gmsh~\cite{GMSH2009}. The corresponding dual mesh was then
calculated, providing a polygonal mesh approximation to the cytosol we
used in CRDME simulations (\cref{fig:bcellmesh}). For
demonstrative purposes, we choose a coarse mesh with a maximum dual
mesh diameter of $121.4$nm and a average dual mesh diameter of
$15.9$nm.

\begin{figure}[tbp]
  \centering
  \subfloat[$t=0.01s$]{
    \label{fig:sigCasdBcell1}
    \includegraphics[scale=0.55]{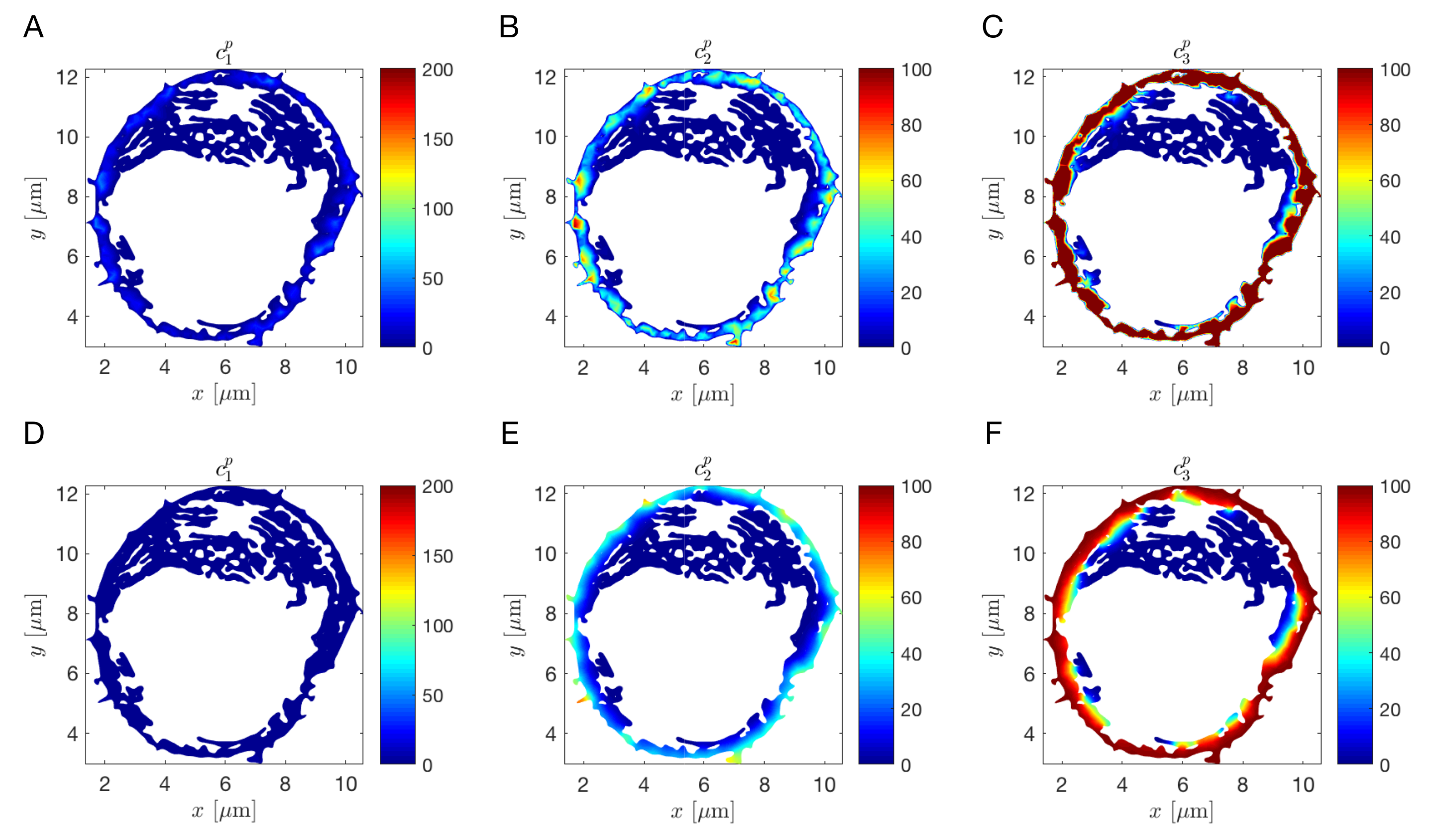}
  }
  \\
  \subfloat[Approximate steady-state, $t=0.4s$]{
    \label{fig:sigCasdBcellss} 
    \includegraphics[scale=0.55]{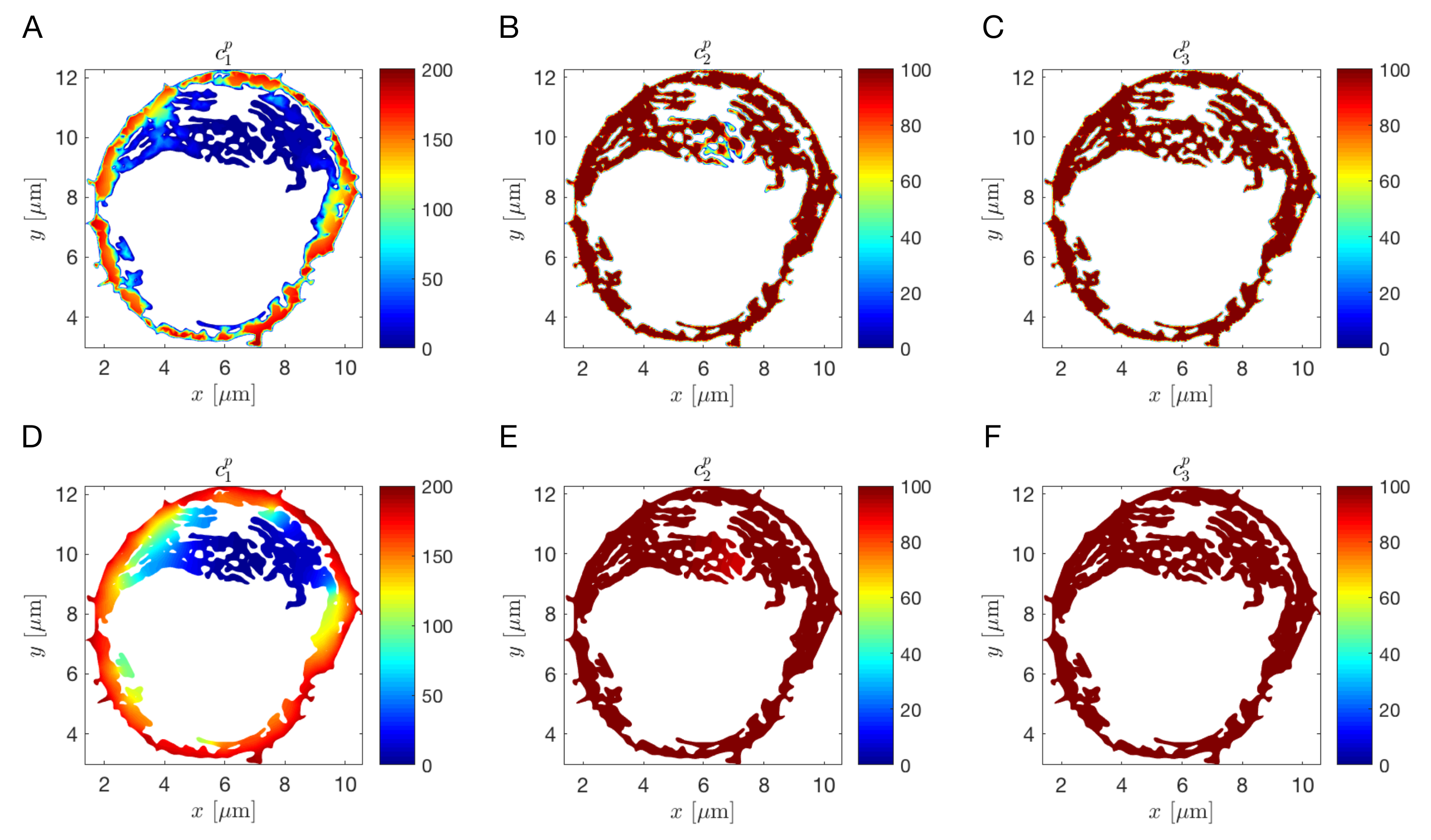}
  }
  \caption{Phosphorylated form ($c_n^p$, $n = 1, 2, 3$) profiles at
    $t = 0.01s$ and at steady-state ($t=0.4s$). In panels A, B, and C
    we plot the result from one CRDME simulation. In panels D, E, and
    F we plot the result by numerically solving the PDE corresponding
    to \cref{eq:sigCasd} using the finite element method in space and
    backward-Euler method in time.}
\end{figure}
We assumed no-flux Neumann boundary conditions on the boundary of the
cell, the nucleus, and all organelles within the cytosol, with a
diffusion constant of $D = 5 \mu$m$^{2}s^{-1}$ for each species.  The
bimolecular reaction radius $\rb$ was chosen to be $10\text{nm}$. The
reaction rates were chosen to be consistent with those used
in~\cite{MunozGarcia:2009hd}. The dephosphorylation rates, $k_n^i$,
are set to be $5s^{-1}$ for $n = 1, 2, 3$. The phosphorylation rates,
$k_n^a$, are set to be $50s^{-1}$ for $n = 2, 3$, and the
phosphorylation rate for the first species, $k_1^a$, is set to be
$5s^{-1}$. We note that the first-level kinase phosphorylation only
occurs in the cell membrane. In the simulation, this reaction process
was restricted to voxels bordering the boundary of the cell. Finally,
we initialized the system with no $c_n^p$ for $n = 1, 2, 3$, $200$
$c_1^u$ molecules and $100$ $c_2^u$ and $c_3^u$ molecules in each
voxel.

In \cref{fig:sigCasdBcell1,fig:sigCasdBcellss} we show the
phosphorylated form ($c_n^p$, $n = 1, 2, 3$) profiles at $t = 0.01s$
and $t = 0.4s$ (steady state) of the system from one CRDME simulation
in comparison to the numerical solution of the PDE corresponding to
\cref{eq:sigCasd}. In both cases, we observe initially a rapid
infusion of $c_2^p$ and $c_3^p$ starting from the cell membrane
(\cref{fig:sigCasdBcell1}), and eventually $c_2^p$ and $c_3^p$ become
uniformly distributed in the cell.

\section{\commentb{Discussion}}
By using a finite volume discretization of reaction terms, we have
developed a convergent lattice jump process approximation, the
convergent reaction-diffusion master equation (CRDME), to the abstract
volume-reactivity model for reversible reactions. The final CRDME can
handle general bimolecular interaction functions on both structured
and unstructured polygonal grids, including the popular Doi reaction
model. The flexibility of the CRDME approach allows the reuse of a
variety of methods for approximating the spatial diffusion of
molecules developed for the RDME model. These include the unstructured
grid finite element approach we used here (developed
in~\cite{LotstedtFERDME2009}), but also the Cartesian grid cut cell
finite volume method of~\cite{IsaacsonSJSC2006} and the unstructured
grid finite volume method of~\cite{DeSchutterSTEPS2012uh}.  As we
demonstrated in \cref{S:numericExs}, this enables the use of the CRDME
to study complex particle-based stochastic reaction-diffusion models
within realistic domain geometries arising from cellular imaging data.

One benefit to the CRDME approach is that it is equivalent to the
popular RDME model in its treatment of spatial transport
(e.g. diffusion) and linear reactions, while converging to an
underlying spatially-continuous particle model in the limit that the
mesh spacing approaches zero (whereas in two or more dimensions the
RDME loses bimolecular reactions in such limits). This enables
CRDME-based models to immediately reuse many of the extensions to the
RDME that have been developed to optimize simulation performance, and
extend the RDME to more general spatial transport
mechanisms. Moreover, as the RDME approaches the CRDME as the mesh is
coarsened~\cite{IsaacsonCRDME2013}, this suggests a possible adaptive
mesh refinement method. The CRDME could be used in regions where fine
meshes are required and the RDME loses bimolecular reaction effects,
while the RDME (or a renormalized
RDME~\cite{HellanderJumpRatesUnstructMesh2017ed}), is used in regions
where coarse meshes are acceptable. Constructing such hybrid models is
relatively straightforward since the RDME and CRDME differ only in the
set of jump process transitions they use to model bimolecular
reactions.

There are still a number of directions in which the CRDME could be
improved. Foremost is the development of optimized simulation methods
for the jump process associated with the CRDME; in this work we used a
simple variant of the method for well-mixed reactions developed
in~\cite{GibsonBruckJPCHEM2002}. \commentab{Once such optimized
  simulation methods are available, it would be interesting to compare
  the computational work to achieve a given accuracy with the CRDME to
  that required by Brownian Dynamics methods for solving the Doi
  model, and potentially to that required by renormalized RDMEs that
  approximate the Doi model. Note, the latter would require first
  developing renormalized RDME approximations to the Doi model as
  current approaches have focused on approximating the
  Smoluchowski-Collins-Kimball
  model~\cite{HellanderJumpRatesUnstructMesh2017ed}.}

We have also not discussed how to evaluate the reactive jump rate
integrals for $\kpij$ and $\kpijk$ in three-dimensional domains. While
the method we developed in \cref{ap:hypervolCalc} for evaluating such
integrals in two-dimensions is straightforward, it is more complicated
in three-dimensions, where one must rapidly and accurately evaluate
volumes of intersection between spheres and polyhedra. In limited
testing we have found that the recently developed primitive
intersection library from~\cite{StroblVolIsect2016bh} offers good
performance and accuracy in many cases. It can then be wrapped within
adaptive quadrature routines to evaluate integrals of volumes of
intersection over polyhedra in a similar manner to how we wrapped our
2D area of intersection method to evaluate integrals over polygons in
\cref{ap:hypervolCalc}.


\appendix

\section{General Multi-particle $\textrm{A} + \textrm{B} \leftrightarrows \textrm{C}$ Reaction}
\label{ap:multipartModel}
In this appendix we consider the general multiparticle
$\textrm{A} + \textrm{B} \leftrightarrows \textrm{C}$ reaction within
a bounded domain $\Omega \subset \R^d$. We will show that the natural
generalization of the discretization procedure used in
\cref{S:diffApprox,S:rxDiscretMethod} to derive the
master equation approximation~\cref{eq:twopartCRDME} also leads to a
master equation approximation for the general multiparticle case. This
new master equation corresponds to the forward Kolmogorov equation for
the jump process given in \cref{tab:aAndBToCRxs}, demonstrating
that the multiparticle system's transition rates are simply the
two-particle transition rates multiplied by the number of possible
ways each transition event can occur given the current system state.

We first formulate the general multiparticle abstract volume
reactivity model, using a similar notation
to~\cite{DoiSecondQuantA,DoiSecondQuantB,Lachowicz2011}.  Denote by
$A(t)$ the stochastic process for the number of species \textrm{A}
molecules in the system at time $t$, with $B(t)$ and $C(t)$ defined
similarly. Values of $A(t)$, $B(t)$ and $C(t)$ will be given by $a$,
$b$ and $c$ (i.e. $A(t)=a$). When $A(t)=a$, we will let
$\vQa_l(t) \in \Omega$ label the stochastic process for the position
of the $l$th molecule of species \textrm{A} within the domain
$\Omega$. $\vqa_l$ will denote a possible value of $\vQa_l(t)$.  The
species \textrm{A} position vector when $A(t)=a$ is then given by
\begin{equation*}
  \vQa(t) = (\vQa_1(t), \dots, \vQa_a(t)) \in \Omega^{a},
\end{equation*}
where $\Omega^{a} = \Omega \times \dots \times \Omega \subset \R^{da}$.
Similarly, $\vqa$ will denote a possible value of $\vQa(t)$,
\begin{equation*}
  \vQa(t) = \vqa = (\vqa_1,\dots,\vqa_a).
\end{equation*}
$\Omega^b$, $\Omega^c$, $\vQb(t)$, $\vQb_m(t)$, $\vQc(t)$,
$\vQc_n(t)$, $\vqb_m$, $\vqc_n$, $\vqb$ and $\vqc$ will all be defined
analogously. The state of the system is then a hybrid
discrete--continuous state stochastic process given by
$\paren{A(t), B(t), C(t),\vQ^{A(t)}, \vQ^{B(t)}, \vQ^{C(t)}}$.

With this notation, denote by $\fabc(\vqa,\vqb,\vqc,t)$ the
probability density that $A(t)=a$, $B(t)=b$ and $C(t)=c$ with
$\vQa(t)=\vqa$, $\vQb(t)=\vqb$ and $\vQc(t) = \vqc$. We assume that
molecules of the same species are indistinguishable, that is for
$1 \leq l< l' \leq a$ fixed
\begin{multline*}
\fabc \paren{\vqa_1,\dots,\vqa_{l-1},\vqa_l,\vqa_{l+1},\dots,\vqa_{l'-1},\vqa_{l'},\vqa_{l'+1},
  \dots,\vqa_a,\vqb,\vqc,t} \\
= \fabc \paren{\vqa_1,\dots,\vqa_{l-1},\vqa_{l'},\vqa_{l+1},\dots,\vqa_{l'-1},\vqa_{l},\vqa_{l'+1},
  \dots,\vqa_a,\vqb,\vqc,t},
\end{multline*}
with similar relations holding for permutations of the molecule
orderings within $\vqb$ and $\vqc$.  With this assumption the
$f^{(a,b,c)}$ are chosen to be normalized so that
\begin{equation*}
  \sum_{a=0}^{\infty} \sum_{b=0}^{\infty} \sum_{c=0}^{\infty} \brac{ \frac{1}{a!\, b!\, c!}
  \int_{\Omega^{a}} \int_{\Omega^{b}} \int_{\Omega^{c}} \fabc\paren{\vqa,\vqb,\vqc,t} \, d\vqc \, d\vqb \, d\vqa
  } = 1.
\end{equation*}
Here the bracketed term corresponds to the probability of having a
given number of each species, i.e.
\begin{equation*}
  \prob \brac{A(t) = a, B(t) = b, C(t) = c}  = \frac{1}{a!\, b!\, c!}
  \int_{\Omega^{a}} \int_{\Omega^{b}} \int_{\Omega^{c}} \fabc\paren{\vqa,\vqb,\vqc,t} \, d\vqc \, d\vqb \, d\vqa.
\end{equation*}

We denote by $\vf(t)$ the overall probability density vector,
\begin{equation*}
  \vf(t) = \{ f^{(a,b,c)}(\vq^a,\vq^b,\vq^c)\}_{a,b,c},
\end{equation*}
so that the component of $\vf(t)$ indexed by $(a,b,c)$ is
$f^{(a,b,c)} (\vq^a,\vq^b,\vq^c)$. The density vector satisfies the
forward Kolmogorov equation
\begin{equation}
  \label{eq:multipartABtoCEqs}
  \PD{\vf}{t}(t) = (\diffop + \Rp + \Rm) \vf(t).
\end{equation}
We assume molecules can not leave $\Omega$, so that each component
$\fabc(\vqa,\vqb,\vqc,t)$ also satisfies a reflecting Neumann boundary
condition on the appropriate domain boundary,
$\partial \paren{\Omega^{a+b+c}}$.  Here the linear operators
$\diffop$, $\Rp$ and $\Rm$ correspond to diffusion, the forward
association reaction and the reverse dissociation reaction
respectively. The $(a,b,c)$ component of the diffusion operator is
given by
\begin{equation}
  \label{eq:multipartDiffop}
  (\diffop \vf(t))_{a,b,c} = \paren{D^{\textrm{A}} \sum_{l=1}^{a} \lap_{\vqa_l} 
    + D^{\textrm{B}} \sum_{m=1}^{b} \lap_{\vqb_m}
    + D^{\textrm{C}} \sum_{n=1}^{c} \lap_{\vqc_n}} \fabc(\vqa,\vqb,\vqc,t),
\end{equation}
where $\lap_{\vqa_l}$ denotes the $d$-dimensional Laplacian acting on
the $\vqa_l$ coordinate, and $\lap_{\vqb_m}$ and $\lap_{\vqc_n}$ are
defined similarly. To define the reaction operators, $\Rp$ and $\Rm$,
we introduce notations for adding or removing a molecule from a given
state, $\vqa$. Let
\begin{align*}
  \vqa \cup \vx &= \paren{ \vqa_1, \dots, \vqa_a, \vx}, & 
  \vqa \setminus \vqa_l &= \paren{ \vqa_1,\dots,\vqa_{l-1}, \vqa_{l+1}, \dots, \vqa_a},
\end{align*}
which correspond to adding a molecule to species \textrm{A} at $\vx$,
and removing the $l$th molecule of species \textrm{A} respectively.
With these definitions, the reaction operator for the association
reaction $\textrm{A} + \textrm{B} \to \textrm{C}$ is given by
\begin{equation} \label{eq:multipartRp}
  \begin{aligned} 
    (\Rp \vf(t))_{a,b,c} = 
    &-\paren{\sum_{l=1}^a \sum_{m=1}^b \kp \paren{\vqa_{l}, \vqb_{m}}} \fabc(\vqa,\vqb,\vqc,t) \\
    &+ \sum_{n=1}^{c} \!\brac{\int_{\Omega^2} \kp(\vqc_n \vert \vx, \vy) 
      f^{(a+1,b+1,c-1)}(\vqa \cup \vx, \vqb \cup \vy, \vqc \setminus \vqc_n, t) d\vx d\vy}\!\!.
  \end{aligned}
\end{equation}
Similarly, the reaction operator for the dissociation reaction
$\textrm{C} \to \textrm{A} + \textrm{B}$ is given by
\begin{multline} \label{eq:multipartRm}
  (\Rm \vf(t))_{a,b,c} = - \paren{ \sum_{n=1}^{c} \km(\vqc_n)} \fabc(\vqa, \vqb, \vqc, t) \\
  + \sum_{l=1}^a \sum_{m=1}^b \brac{\int_{\Omega} \km\paren{\vqa_l,\vqb_m\vert\vz}
    f^{(a-1,b-1,c+1)}\paren{\vqa\setminus \vqa_l, \vqb\setminus \vqb_m, \vqc \cup \vz, t} d\vz}.
\end{multline}

To approximate~\cref{eq:multipartABtoCEqs} by a master equation model
we reuse the jump process discretizations for diffusion and reaction
developed in \cref{S:diffApprox,S:methodSect}. The
former is possible since the diffusive motion of each molecule is
independent, so we can apply the finite element discretization of
\cref{S:diffApprox} to each individual molecule's position
coordinates. Similarly, each reaction term in the $\Rp$ and $\Rm$
definitions involves sums of independent one or two-body interactions
that are identical to those in~\cref{eq:pPDE}
and~\cref{eq:pbPDE}. As such, we may re-use the finite volume
discretization of \cref{S:rxDiscretMethod} independently for
each of these terms.

We again let $\{V_i\}_{i=1}^K$ denote a polygonal mesh approximation
to $\Omega$, constructed as the dual mesh to a triangulation of
$\Omega$. Reusing the notation of \cref{S:diffApprox,S:methodSect} we
let $\bi^a = (i^a_1,\dots,i^a_a)$ denote the multi-index labeling the
hyper-voxel $\vV_{\bi^a}$
\begin{equation*}
  \vV_{\bi^a} := V_{i^a_1} \times \dots \times V_{i^a_a},
\end{equation*}
with $\bj^b$, $\bk^c$, $V_{\bj^b}$ and $V_{\bk^c}$ defined
similarly. Multi-species hypervoxels will be given by
$\vV_{\bi^a \bj^b} := \vV_{\bi^a} \times \vV_{\bj^b}$ and
$\vVabc := \vV_{\bi^a} \times \vV_{\bj^b} \times \vV_{\bk^c}$.  We
then make the piecewise constant (well-mixed) approximation that
\begin{subequations}
  \begin{align}
    \Fabcijk(t) &:= \prob \brac{A(t) = a, B(t) = b, C(t) = c \text{ and } 
                  \paren{\vQa(t), \vQb(t), \vQc(t)} \in \vVabc} \notag \\ 
                &\phantom{:}= \frac{1}{a! b! c!} \int_{\vVabc} \fabc(\vqa,\vqb,\vqc,t) \, d\vqc \, d\vqb \, d\vqa \notag \\
                &\phantom{:}\approx \fabc(\vqa_{\bi^a}, \vqb_{\bj^b}, \vqc_{\bk^c},t) \frac{\abs{\vVabc}}{a! \, b! \, c!}, \label{eq:FabcijkDef}
  \end{align}
\end{subequations}
where $\vqa_{\bi^a}$ denotes the vector of the centroids of the voxels
in $V_{\bi^a}$, with $\vqb_{\bj^b}$ and $\vqc_{\bk^c}$ defined
similarly. We collect the probabilities $\Fabcijk(t)$ into a state
vector
\begin{equation*}
  \vF(t) = \{ \Fabcijk(t) \}_{\bi^a,\bj^b,\bk^c},
\end{equation*}
so that the $(\bi^a,\bj^b,\bk^c)$ component of $\vF(t)$ is
$\Fabcijk(t)$.  By using the finite element discretization of
\cref{S:diffApprox} to approximate each Laplacian within the diffusion
operator, integrating the action of each of the reaction operators,
$\diffop$, $\Rp$, and $\Rm$ on $\fabc$ over $\vVabc$, and using the
well-mixed (piecewise constant) approximation~\cref{eq:FabcijkDef}, we
find that $\vF(t)$ satisfies the master equation
\begin{equation*}
  \D{\vF}{t}(t) = \paren{\diffoph + \Rph + \Rmh} \vF(t),
\end{equation*}
corresponding to the forward Kolmogorov equation for a jump process.
Here the discretized diffusion operator $\diffoph$ is given by
\begin{align*}
  (\diffoph \vF(t))_{\bi^a,\bj^b,\bk^c} 
  &= D^{\textrm{A}} \sum_{l=1}^{a} \sum_{i'= 1}^{K} \brac{ \paren{\Delta_h^T}_{i^a_l i'} F_{\bi^a\setminus i^a_l\cup i', \bj^b, \bk^c}(t)
    - \paren{\Delta_h^T}_{i' i^a_l } \Fabcijk(t)} \\
  &+ D^{\textrm{B}} \sum_{m=1}^{b} \sum_{j'= 1}^{K} \brac{ \paren{\Delta_h^T}_{j^b_m j'} F_{\bi^a, \bj^b\setminus j^b_m\cup j', \bk^c}(t)
    - \paren{\Delta_h^T}_{j' j^b_m } \!\Fabcijk(t)} \\
  & + D^{\textrm{C}} \sum_{n=1}^{c} \sum_{k'= 1}^{K} \brac{ \paren{\Delta_h^T}_{k^c_n k'} F_{\bi^a,\bj^b,\bk^c\setminus k^c_n\cup k'}(t)
    - \paren{\Delta_h^T}_{k' k^c_n } \!\Fabcijk(t)}.
\end{align*}
Similarly, the forward reaction association operator $\Rph$
is given by
\begin{equation*}
  (\Rph \vF(t))_{\bi^a,\bj^b,\bk^c} =
  -\paren{\sum_{l=1}^a \sum_{m=1}^b \kp_{i^a_l j^b_m} } \Fabcijk(t)  
  + \frac{(a+1)(b+1)}{c} \sum_{n=1}^{c} \sum_{i'=1}^{K} \sum_{j'=1}^{K}  
  \kp_{i' j' k_n^c} F_{\bi^a \cup i', \bj^b \cup j', \bk^c \setminus k_n^c}(t),
\end{equation*}
and the backward reaction dissociation operator $\Rmh$ is given by
\begin{equation*}
  (\Rmh \vF(t))_{\bi^a,\bj^b,\bk^c} = 
  - \paren{ \sum_{n=1}^{c} \km_{k_n^c}} \Fabcijk(t) 
  + \frac{c+1}{a b} \sum_{l=1}^a \sum_{m=1}^b \sum_{k'=1}^K \km_{i_l^a j_m^b k'} 
  F_{\bi^a \setminus i_l^a, \bj^b \setminus j_m^b, \bk^c \cup k'}(t). 
\end{equation*}

We wish to now convert from a representation where the state variables
are the numbers of molecules of each species and the indices of the
voxels that contain each molecule, to a representation where the state
variables are the numbers of molecules of each species at each lattice
site. While equivalent, this latter representation is more commonly
used for the reaction-diffusion master equation (RDME), and allows for
the easy identification of the effective transition rates listed in
\cref{tab:aAndBToCRxs}. Here we summarize how to convert between the
two forms, and refer readers interested in detailed derivations to the
near-identical conversion that we previously carried out
in~\cite{IsaacsonRDMENote}.

Denote by $A_i(t)$ the stochastic process for the number of molecules
of species \textrm{A} in voxel $V_i$ at time $t$, with $B_j(t)$ and
$C_k(t)$ defined similarly. By $a_i$, $b_j$ and $c_k$ we denote values
of $A_i(t)$, $B_j(t)$ and $C_k(t)$, i.e. $A_i(t) = a_i$. For each
species the new representation of the state at time $t$ is given by
the vector stochastic process
\begin{equation*}
  \vA(t) = \paren{A_1(t),\dots,A_K(t)},
\end{equation*}
with $\vB(t)$ and $\vC(t)$ defined similarly. The vectors $\va$, $\vb$
and $\vc$ will then denote corresponding values of these stochastic
processes. Finally, let
\begin{equation*}
 P(\va,\vb,\vc,t) := \prob \brac{ \vA(t)=\va, \vB(t) = \vb, \vC(t) = \vc}.
\end{equation*}
With these definitions, we now derive a master equation satisfied by
the probability distribution vector
$\vP(t) = \{P(\va,\vb,\vc,t)\}_{\va,\vb,\vc}$ from the master equation
for $\vF(t)$.

In the remainder, assume that the two representations of state are
chosen to be consistent, i.e. that $\va$ and $\bi^a$ are chosen so
that
\begin{equation*}
  a_i = \abs{ \{ i_l^a \vert i_l^a = i, l=1,\dots,a\}},
\end{equation*}
where $\abs{\cdot}$ denotes the cardinality of the set (with similar
relations for $b_j$ and $c_k$). With
\begin{equation} \label{eq:omegaDef}
  \omega_{\va,\vb,\vc} = \frac{a!\, b!\, c!}{\prod_{i=1}^K { a_i!\, b_i!\, c_i!}},
\end{equation}
we showed in~\cite{IsaacsonRDMENote} that
\begin{equation}
  \label{eq:partToNumRep}
  P(\va,\vb,\vc,t) = \omega_{\va,\vb,\vc} F_{\bi^a \bj^b \bk^c}(t).
\end{equation}
We therefore define the action of the diffusion and reaction operators
on $\vP(t)$ by
\begin{align*}
  (\diffoph \vP) (\va,\vb,\vc,t) &:= \omega_{\va,\vb,\vc} (\diffoph \vF(t))_{\bi^a,\bj^b,\bk^c}\\
  (\Rph \vP)(\va,\vb,\vc,t) &:= \omega_{\va,\vb,\vc} (\Rph \vF(t))_{\bi^a,\bj^b,\bk^c} \\
  (\Rmh \vP)(\va,\vb,\vc,t) &:= \omega_{\va,\vb,\vc} (\Rmh \vF(t))_{\bi^a,\bj^b,\bk^c},
\end{align*}
so that
\begin{equation} \label{eq:vPEq}
  \D{\vP}{t}(t) = \paren{\diffoph + \Rph + \Rmh} \vP(t).
\end{equation}

To explicitly characterize the action of each operator on
$P(\va,\vb,\vc,t)$, we use the symmetry of $\vF$ with respect to
components of each of $\bi^a$, $\bj^b$, and $\bk^c$
respectively. Symmetry implies that
\begin{equation} \label{eq:LhFrewrite}
  \begin{aligned} 
    (\diffoph \vF(t))_{\bi^a,\bj^b,\bk^c} 
    &= D^{\textrm{A}} \sum_{i= 1}^{K} \sum_{i'= 1}^{K} \brac{ \paren{\Delta_h^T}_{i i' } 
      a_{i} F_{\bi^a\setminus i \cup i', \bj^b, \bk^c}(t)
      - \paren{\Delta_h^T}_{i' i} a_{i} \Fabcijk(t)} \\
    &+ D^{\textrm{B}} \sum_{j= 1}^{K} \sum_{j'= 1}^{K} \brac{ \paren{\Delta_h^T}_{j j'} 
      b_{j} F_{\bi^a, \bj^b\setminus j \cup j', \bk^c}(t)
      - \paren{\Delta_h^T}_{j' j} b_{j} \Fabcijk(t)} \\
    & + D^{\textrm{C}} \sum_{k= 1}^{K} \sum_{k'= 1}^{K} \brac{ \paren{\Delta_h^T}_{k k'} 
      c_{k} F_{\bi^a,\bj^b,\bk^c\setminus k \cup k'}(t)
      - \paren{\Delta_h^T}_{k' k} c_{k} \Fabcijk(t)}.
  \end{aligned}
\end{equation}
Let $\ve_i$ denote the unit vector along the $i$th coordinate axis of
$\R^K$, and note we have a collection of identities relating
$\omega_{\va,\vb,\vc}$ values for different state vectors,
see~\cite{IsaacsonRDMENote}. For example,
\begin{equation*}
  a_{i} \, \omega_{\va,\vb,\vc} = (a_{i'}+1) \omega_{\va+e_{i'}-e_{i},\vb,\vc}.
\end{equation*}
Multiplying~\cref{eq:LhFrewrite} by $\omega_{\va,\vb,\vc}$ we then find
\begin{equation} \label{eq:LhvPDef}
  \begin{aligned}
    (\diffoph &\vP)
    (\va,\vb,\vc,t) \\
    &= D^{\textrm{A}} \sum_{i= 1}^{K} \sum_{i'= 1}^{K} \brac{ \paren{\Delta_h^T}_{i i'} 
      (a_{i'}+1) P(\va+\ve_{i'}-\ve_{i},\vb,\vc,t)
      - \paren{\Delta_h^T}_{i' i} a_{i} P(\va,\vb,\vc,t)} \\
    &+ D^{\textrm{B}} \sum_{j= 1}^{K} \sum_{j'= 1}^{K} \brac{ \paren{\Delta_h^T}_{j j'} 
      (b_{j'}+1) P(\va,\vb + \ve_{j'} - \ve_{j}, \vc, t) 
      - \paren{\Delta_h^T}_{j' j} b_{j} P(\va,\vb,\vc,t)} \\
    &+ D^{\textrm{C}} \sum_{k= 1}^{K} \sum_{k'= 1}^{K} \!\brac{ \paren{\Delta_h^T}_{k k'} 
      (c_{k'}+1) P(\va,\vb,\vc + \ve_{k'}\!-\ve_{k},t)
      - \paren{\Delta_h^T}_{k' k} c_{k} P(\va,\vb,\vc,t)}.
  \end{aligned}
\end{equation}
Similarly,
\begin{equation*}
  (\Rph \vF(t))_{\bi^a,\bj^b,\bk^c} =
  -\paren{\sum_{i=1}^K \sum_{j=1}^K \kp_{i j} a_{i} b_{j}} \Fabcijk(t) 
  + \frac{(a+1)(b+1)}{c} \sum_{i=1}^{K} \sum_{j=1}^{K} \sum_{k=1}^K
  \kp_{i j k} c_{k} F_{\bi^a \cup i, \bj^b \cup j, \bk^c \setminus k}(t),
\end{equation*}
so that 
\begin{multline} \label{eq:RphPDef}
  (\Rph \vP)(\va,\vb,\vc,t) 
  = -\paren{\sum_{i=1}^K \sum_{j=1}^K \kp_{i j} a_{i} b_{j}} \!\!P(\va,\vb,\vc,t) \\
  + \sum_{i=1}^{K} \sum_{j=1}^{K} \sum_{k=1}^K
  \kp_{i j k} (a_{i}+1)(b_{j}+1) P(\va+\ve_{i},\vb+\ve_{j},\vc-\ve_{k},t),
\end{multline}
and
\begin{equation*}
  (\Rmh \vF(t))_{\bi^a,\bj^b,\bk^c} = 
  - \paren{ \sum_{k=1}^{K} \km_{k} c_{k}} \Fabcijk(t) 
  + \frac{c+1}{a b} \sum_{i=1}^K \sum_{j=1}^K \sum_{k=1}^K \km_{i j k} a_{i} b_{j}
  F_{\bi^a \setminus i, \bj^b \setminus j, \bk^c \cup k}(t),
\end{equation*}
so that 
\begin{equation} \label{eq:RmhPDef}
  (\Rmh \vP(t))(\va,\vb,\vc,t) = 
  - \paren{ \sum_{k=1}^{K} \km_{k} c_{k}} P(\va,\vb,\vc,t) 
  + \sum_{i=1}^K \sum_{j=1}^K \sum_{k=1}^K \km_{i j k} (c_{k}+1)
  P(\va - \ve_{i}, \vb - \ve_{j}, \vc + \ve_{k}, t).
\end{equation}

\begin{table}[t]
  \centering
  \bgroup
  \renewcommand{\arraystretch}{1.2}
  \begin{tabular}{|l|c|c|c|}
    \hline
    & Transitions & Transition Rates & Upon Transition Event\\
    \hline 
    \multirow{6}{*}{\parbox{2cm}{Chemical Reactions:}}
    &\multirow{3}{*}{$\textrm{A}_i + \textrm{B}_j \to \textrm{C}_k$} & \multirow{3}{*}{$\kpijk a_i b_j$} 
                                   & $\textrm{A}_i := \textrm{A}_i - 1$\\  
    & & & $\textrm{B}_j := \textrm{B}_j - 1$\\
    & & & $\textrm{C}_k := \textrm{C}_k + 1$ \\ \cline{2-4}
    &\multirow{3}{*}{$\textrm{C}_k \to \textrm{A}_i + \textrm{B}_j$} & \multirow{3}{*}{$\kmijk c_k$} 
    & $\textrm{A}_i := \textrm{A}_i + 1$ \\
    & & & $\textrm{B}_j := \textrm{B}_j + 1$ \\
    & & & $\textrm{C}_k := \textrm{C}_k - 1$\\
    \hline
  \end{tabular}
  \egroup
  \caption{ 
    Summary of possible \emph{reactive} transitions for the general multi-particle
    $\textrm{A} + \textrm{B} \leftrightarrows \textrm{C}$ reaction in the CRDME~\cref{eq:vPEq}. Here
    $a_i$ denotes the number of \textrm{A} molecules in voxel $V_i$, with $b_j$ and $c_k$ defined
    similarly. Transition rates give the probability per time for a reaction to occur (i.e. the propensities). 
    The final column explains how to update the system state upon occurrence of the reaction.
  }
  \label{tab:aAndBToCRxsV2}
\end{table}

The coupled system of linear ordinary differential-difference
equations given by~\cref{eq:vPEq} with the operator
definitions~\cref{eq:LhvPDef}, \cref{eq:RphPDef}
and~\cref{eq:RmhPDef} gives a master equation for the probability the
vector jump process $(\vA(t),\vB(t),\vC(t))$ has the value
$(\va,\vb,\vc)$ at time $t$. By analogy with the RDME, which is
usually written in terms of these state variables, we
call~\cref{eq:vPEq} the convergent reaction-diffusion master equation
(CRDME).  The possible diffusive transitions for the jump process the
CRDME describes are identical to those enumerated in
\cref{tab:aAndBToCRxs}, while the possible reactive transitions
are given in \cref{tab:aAndBToCRxsV2}. Note, as we described in
\cref{S:rxDiscretMethod}, simulating the set of reactive
transitions in \cref{tab:aAndBToCRxs} with the associated update
rules upon reaction events is statistically equivalent to simulating
the set of reactions in \cref{tab:aAndBToCRxsV2}. As such, we
have shown that the transitions in \cref{tab:aAndBToCRxs} give
the generalization of the two-particle transitions in
equation~\cref{eq:twopartCRDME} to the general multi-particle case.


\section{Evaluating $\kpij$ and $\kpijk$ integrals for the Doi Volume-Reactivity Model}
\label{ap:hypervolCalc}
In \cref{S:implementation} we showed that $\kpij$ in convex
domains, and $\kpijk$ in general domains, could be given in terms of
area fractions $\phi_{i j}$ and $\phi_{i j k}$:
\begin{align*}
  \kpij &= \lambda \phi_{i j}, &
  \kpijk &= \lambda \phi_{i j k},
\end{align*}
where
\begin{align}
  \phi_{i j} &:
               = \frac{1}{\abs{V_{i j}}} \int_{V_i} \abs{B_{\rb}(\vx) \cap V_j} \, d\vx, \label{eq:phiij}\\
  \phi_{i j k} &:
                 = \frac{1}{\abs{V_{i j}}} \int_{V_i} \abs{B_{\rb}(\vx) 
                 \cap V_j \cap \hat{V}_k(\vx)} \, d\vx, \label{eq:phiijk}
\end{align}
and $\hat{V}_k(\vx)$ is defined by~\cref{eq:VhatDef}. Evaluating
these area fractions requires evaluation of the integrands, which
correspond to areas of intersection between disks of radius $\rb$ and
polygons. Our general approach to evaluating these integrals was to
develop fast and accurate routines to evaluate the integrands, and
then evaluate the outer integrals over $V_i$ using MATLAB's
\texttt{integral2} routine. The method we now describe is a
generalization of the method we outlined in~\cite{IsaacsonCRDME2013}
on Cartesian meshes (where the voxels $\{V_i\}_{i=1}^K$ were squares).

We begin by considering how to evaluate the area of intersection
$\abs{D \cap P}$ between a disk $D$ and a polygon $P$. 
The boundaries of each set will be given by $\partial D$ and
$\partial P$ respectively, with $\partial (D \cap P)$ denoting the
boundary curve(s) of intersection between the two sets.  Assume the
disk has radius $r$, with $\vC = (c_0,c_1)$ labeling its center. We
denote by $\{\vP_i\}_{i=1}^n$ a counter-clockwise (CCW) ordering of
the polygon's vertices, and for convenience define
$\vP_{n+1} := \vP_1$. Let $S_i$ label the side of the polygon given
by the line $\vL_i(s)$ connecting $\vP_i$ to $\vP_{i+1}$, for
$i = 1,\dots,n$. In parametric coordinates,
\begin{equation*}
  S_i = \{ \vL_i(s) \mid \vL_i(s) = \vP_i + s \vT_i, \, s \in \brac{0,1} \},
\end{equation*}
where $\vT_i = (\vP_{i+1} - \vP_{i})$ denotes the (unnormalized)
tangent vector to the line. 
Similarly, we describe the circle $\partial D$ by the parametric
curve $\vgam(t)$,
\begin{equation*}
  \partial D = \{  \vgam(t) \mid \vgam(t) = \vC + r \veta(t), \, t \in \left[0,2 \pi\right) \},
\end{equation*}
where $\veta(t) = (\cos(t),\sin(t))$ denotes the unit normal to the
circle. The (unnormalized) tangent to the circle is then
$\vT(t) = r \D{\veta}{t}$.

With these definitions, the area of intersection between $D$ and $P$ is given by
\begin{align*}
  \abs{D \cap P} &= \iint_{D \cap P} \, dx \, dy, \\
                 &= \frac{1}{2} \iint_{D \cap P} \nabla \cdot (x,y) \, dx \, dy,\\
                 &= \frac{1}{2} \int_{\partial (D \cap P)} (x,y) \cdot \veta(x,y) \, dl,
\end{align*}
where we have used the divergence theorem. Here $\veta(x,y)$ denotes
the unit outward normal at the point $(x,y)$, and $dl$ denotes the
differential along the boundary curve at the point $(x,y)$. Using
indicator functions we may split up the last integral as
\begin{align}
  \abs{D \cap P} &= \frac{1}{2} \int_{\partial P} \paren{(x,y) \cdot \veta(x,y)} \ind_{D}(x,y) \, dl
                   + \frac{1}{2} \int_{\partial D} \paren{(x,y) \cdot \veta(x,y)}  \ind_{P}(x,y) \, dl \notag \\
                 &= \frac{1}{2} \sum_{i=1}^{n} \brac{\int_{S_i} \paren{(x,y) \cdot \veta_i} \ind_{D}(x,y) \, dl}
                   + \frac{1}{2} \int_{\partial D} \paren{(x,y) \cdot \veta(x,y)}  \ind_{P}(x,y) \, dl. \label{eq:finalAreaForm}
\end{align}
Here $\veta_i$ denotes the \emph{constant} unit outward normal to $P$
on $S_i$. To calculate $\abs{D \cap P}$ we then need to evaluate these
$n+1$ integrals. Our strategy will be to determine the points of
intersection between $\partial D$ and $\partial P$. These divide each
line segment $S_i$, and the circle $\partial D$, into a collection of
sub-arcs along which the preceding integrals can be evaluated
analytically.

\subsection{Finding Points of Intersection of $\partial D$ and $\partial P$}
\label{S:findIsectPts}
We first calculate the locations where the polygon is intersected by
the circle. Let $s \in \brac{0,1}$ denote the parametric coordinate
describing the line segment, $S_i$.  Suppose the circle intersects
$S_i$ at the (parameterization) points $\{s_i^j\}_{j=1}^{N_i}$ (where
$N_i$ can be $0$, $1$, or $2$). Let $s_i^0 = 0$ and $s_i^{N_i+1} = 1$.

The $\{s_i^j\}$ are determined by the solutions for $s \in \brac{0,1}$
to the equation
\begin{equation} \label{eq:SqPtsFromIsect}
  \abs{ \vL_i(s) - \vC}^2 = r^2.
\end{equation}
Let 
\begin{align*}
  b := \frac{\vT_i \cdot (\vP_i - \vC)}{\abs{\vT_i}^2}, &&\text{and} 
  &&d := \frac{\abs{\vP_i - \vC}^2 - r^2}{\abs{\vT_i}^2}.
\end{align*}
Expanding out~\cref{eq:SqPtsFromIsect} and using the quadratic formula,
we find the circle crosses $S_i$ only for $b^2-d > 0$. The points
of intersection are given by 
\begin{equation*}
  s^* = -b \pm \sqrt{b^2 - d},
\end{equation*}
wherever $s^* \in \brac{0,1}$. These give the values of the
$\{s_i^j\}$.  (Note, in practice we do not include points where the
circle touches but does not cross $S_i$, since they are not needed to
calculate the area of intersection.)

We now calculate the locations where the circle is intersected by the
polygon. We denote by $\{t^j\}_{j=1}^M$ the parameterization values
(on the circle) of the points of intersection with the polygon.  Let
$t^{M+1} = t^1 + 2 \pi$. If there are no points of intersection
$M = 0$, and calculating the area of intersection is trivial since we
need only check if one point of the polygon is inside the circle, or
one point of the circle is inside the polygon. We ignore this special
case.

Each $t^j$ corresponds to some $s_i^{j'}$, $i\in\{1,\dots,n\}$,
$j'\in\{1,\dots,N_i\}$. As such, for each $s_i^{j'}$ we solve
\begin{equation*}
\vC + r (\cos(t^*),\sin(t^*)) = \vL_i(s_i^{j'})
\end{equation*}
for $t^*$.  Defining the components of $\vP_i = (P_i^0,P_i^1)$ and $\vT_i
= (T_i^0,T_i^1)$, the solution to the preceding equation is given by
\begin{equation*}
  t^* = \arctan \paren{ \frac{ P_i^1 - c_1 + s_i^{j'} T_i^1}{P_i^0 - c_0 + s_i^{j'} T_i^0}},
\end{equation*}
with the range of $\arctan$ taken in $\left[0,2 \pi\right)$. The
values of $t^*$ then determine the $\{t^j\}_{j=1}^M$.

\subsection{Evaluating the line integrals for the area of intersection}
Knowing the parametric intersection points
$\{s_i^j\}_{i=1,\,j=1}^{i=n,\,j=N_i}$ and $\{t^j\}_{j=1}^M$, we can
evaluate the line integrals along $S_i$ and $\partial D$
in~\cref{eq:finalAreaForm}.  We first explain how to evaluate the
line integral along the side $S_i$ of the polygon. Let $(x(s),y(s))$
denote the parametric curve over which a given line integral is
defined.  Since $dl = \abs{\vL_i'(s)} ds = \abs{\vT_i} ds$, converting
to parametric coordinates we find that
\begin{align*}
  \int_{S_i} \paren{(x,y) \cdot \veta_i} \ind_{D}(x,y) \, dl
  &= \sum_{j=0}^{N} \int_{s_i^j}^{s_i^{j+1}} \paren{ \vL_i(s) \cdot
   \veta_i} \ind_{D}(x(s),y(s)) \abs{\vT_i} ds, \\
 &= \sum_{j=0}^{N} \int_{s_i^j}^{s_i^{j+1}} \paren{\vP_{i}  \cdot \veta_i} 
 \ind_{D}(x(s),y(s)) \abs{\vT_i} ds.
\end{align*}
Here the last equation follows using the definition of $\vL_i(s)$ and
that $\vT_i$ is perpendicular to $\veta_i$.

Let $L_i^j$ denote the subsegment of $S_i$ for
$s \in (s_i^{j},s_i^{j+1})$. $L_i^j$ is either completely inside or
completely outside $D$. This can be determined by checking where the
midpoint of $L_i^j$ lies.  As such, in the preceding equation each
indicator function is constant within each individual integral in the
sum over $j$. We then find
\begin{equation} \label{eq:SqSegInt}
\int_{s_i^j}^{s_i^{j+1}} \paren{\vP_{i}  \cdot \veta_i}  \ind_{D}(x(s),y(s)) \abs{\vT_i} ds
= \begin{cases}
  \paren{\vP_{i} \cdot \veta_i} \abs{\vT_i} (s_i^{j+1}-s_i^{j}), &L_i^j \in D,\\
  0, &L_i^j \notin D.
\end{cases}
\end{equation}
From this formula the line integrals along each $S_i$ are completely
specified once the points of intersection, $\{s_i^j\}_{j=1}^{N_i}$ are
known.

We use a similar approach to evaluate the integral over the circle
$\partial D$.  Since $dl = \abs{\vT(t)} dt$, converting to parametric
coordinates we find
\begin{align*}
\int_{\partial C} \paren{(x,y) \cdot \veta(x,y)}  \ind_{P}(x,y) \, dl 
  &=  \sum_{j=1}^{M} \int_{t^j}^{t^{j+1}} \paren{\vgam(t) \cdot \veta(t)}
\ind_{P}(x(t),y(t)) \abs{\vT(t)} \, dt, \\
&= \sum_{j=1}^{M} \int_{t^j}^{t^{j+1}} \paren{\vC \cdot \veta(t) + r}
\ind_{P}(x(t),y(t)) \abs{\vT(t)} \, dt, \\
&= \sum_{j=1}^{M} \int_{t^j}^{t^{j+1}} \!\!\!\!\! r \paren{c_0 \cos(t) + c_1 \sin(t) + r} \ind_{P}(x(t),y(t)) \, dt.
\end{align*}
Each integral within the sum corresponds to a sub-arc of the circle
that is entirely within or outside the polygon. As such, the indicator
function is either identically one or zero within each integral, which
can be determined by testing if the midpoint of the arc is within $P$.
We therefore conclude
\begin{multline} \label{eq:CircArcInt}
  \int_{t^j}^{t^{j+1}} r \paren{c_0 \cos(t) + c_1 \sin(t) + r} \ind_{P}(x(t),y(t)) \, dt = \\
  \begin{cases}
     r \brac{c_0 \paren{\sin(t^{j+1}) - \sin(t^j)}
       - c_1 \paren{\cos(t^{j+1}) - \cos(t^j)} + r (t^{j+1}-t^j)}, &\text{arc $\in P$,}\\
     0, &\text{arc $\notin P$.}
   \end{cases}
\end{multline}
From this formula the line integral around $\partial D$ is fully
determined once the points of intersection $\{t^j\}_{j=1}^M$ are
known.

\subsection{Overall Algorithm}
\begin{algorithm}[tbp]
  \caption{Evaluating the area of intersection $\abs{D \cap P}$.}
  \label{alg:areaOfIsect}
  \begin{algorithmic}[1]
    \Function{AreaOfIntersection}{disk $D$, polygon $P$}
    \State{$A = 0$.}
    \For{$i = 1,\dots,n$}
    \State{For side $S_i$ of polygon $P$ calculate the intersection points $\{s_i^j\}_{j=1}^{N_i}$.}
    \For{$j = 0,\dots,N_i$}
    \State{$A = A \,+ $ value of~\cref{eq:SqSegInt}}
    \EndFor
    \State{Calculate the intersection points on the circle $\partial D$, $\{t^j\}_{j=1}^M$.}
    \For{$j = 1,\dots,M$}
    \State{$A = A \,+ $ value of~\cref{eq:CircArcInt}}
    \EndFor    
    \EndFor
    \State{\Return $\abs{D \cap P} = \frac{1}{2} A$}
    \EndFunction
  \end{algorithmic}
\end{algorithm}
Using~\cref{eq:SqSegInt} and~\cref{eq:CircArcInt} we can rapidly and
accurately determine the area of intersection $\abs{D \cap P}$. The
overall algorithm we use is summarized in \cref{alg:areaOfIsect}.  Our
implementation is in C++, and where possible uses the CGAL
library~\cite{CGAL} to accurately handle various geometry operations
(triangulation of polygons, calculation of intersection points,
testing if points are inside polygons, etc).

To evaluate the integral~\cref{eq:phiij} for $\phi_{i j}$ we call
MATLAB's numerical integration routine \texttt{integral2}, using our
C++ version of \cref{alg:areaOfIsect} to evaluate the
integrand $\abs{B_{\rb}(\vx) \cap V_j}$. To do the integration over a
given polygon, we triangulate the polygon and sum the results of
integration over each sub-triangle. To calculate $\phi_{i j k}$ requires
one extra step as the integrand
$\abs{B_{\rb}(\vx) \cap V_j \cap \hat{V}_k(\vx)}$ in~\cref{eq:phiijk}
involves the translated and scaled set $\hat{V}_k(\vx)$. Here our
basic approach is to pass a MATLAB wrapper routine that evaluates the
integrand to \texttt{integral2}.  This routine first calculates the
polygon(s) of intersection $\{P_{\ell}\}$ corresponding to
$V_j \cap \hat{V}_k(\vx)$ using MATLAB's \texttt{polybool}
function. For each resulting polygon $P_{\ell}$ the intersection area
$\abs{B_{\rb}(\vx) \cap P_{\ell}}$ is then calculated using our C++
implementation of \cref{alg:areaOfIsect}.  The integrand is
then calculated as
\begin{equation*}
  \abs{B_{\rb}(\vx) \cap V_j \cap \hat{V}_k(\vx)} = \sum_{\ell} \abs{B_{\rb}(\vx) \cap P_{\ell}}.
\end{equation*}

\section*{Acknowledgments}
SAI would like to thank Steven Andrews, Aleksander Donev and David
Isaacson for helpful discussions related to this work.  SAI and YZ
were supported by National Science Foundation award DMS-1255408. SAI
was partially supported by a grant from the Simons Foundation, and
thanks the Isaac Newton Institute of Mathematical Sciences for hosting
him as a visiting Simons Fellow for the programme on Stochastic
Dynamical Systems in Biology while completing this work.

\bibliographystyle{elsarticle-num.bst} 
\bibliography{lib}

\end{document}